\documentclass[11pt]{article}

\usepackage[margin=1.1in]{geometry}
\usepackage{amsmath,amssymb,amsthm,mathtools}
\usepackage{enumitem}
\usepackage[colorlinks=true,linkcolor=blue,citecolor=blue,urlcolor=blue]{hyperref}
\usepackage{array}
\usepackage{graphicx}

\newtheorem{theorem}{Theorem}[section]
\newtheorem{proposition}[theorem]{Proposition}
\newtheorem{lemma}[theorem]{Lemma}
\newtheorem{corollary}[theorem]{Corollary}
\newtheorem{fact}[theorem]{Fact}
\theoremstyle{definition}
\newtheorem{definition}[theorem]{Definition}
\newtheorem{example}[theorem]{Example}
\theoremstyle{remark}
\newtheorem{remark}[theorem]{Remark}
\newtheorem{notation}[theorem]{Notation}

\newcommand{\Fm}{\mathsf{Form}}
\newcommand{\PL}{\mathsf{Prop}}
\newcommand{\SysK}{\mathsf{K}}
\newcommand{\SysT}{\mathsf{T}}
\newcommand{\SysF}{\mathsf{F}}
\newcommand{\SysS}{\mathsf{S}}
\newcommand{\vK}{\vdash_{\SysK}}
\newcommand{\vT}{\vdash_{\SysT}}
\newcommand{\vF}{\vdash_{\SysF}}

\newcommand{\nvF}{\nvdash_{\SysF}}
\newcommand{\pow}{\wp}

\newcommand{\open}{\mathrel{\lhd}}       
\newcommand{\nepo}{\mathrel{\rhd}}       
\newcommand{\preref}{\mathrel{\leq_{\open}}}        
\newcommand{\postref}{\mathrel{\leq_{\nepo}}}       
\newcommand{\strref}{\mathrel{\preccurlyeq_{\open}}}

\newcommand{\true}[1]{\lVert #1 \rVert}

\DeclareMathOperator{\FP}{FP}
\DeclareMathOperator{\SF}{Sub}

\newcommand{\rn}[1]{(\textsc{#1})}

\newcommand{\Fo}{\mathrm{Form}}
\newcommand{\den}[1]{\lVert #1\rVert}          
\newcommand{\openkern}{\mathrel{\lhd\!\rhd}}    
\newcommand{\lhds}{\openkern}                   
\newcommand{\pre}{\preref}                      
\newcommand{\post}{\postref}                    
\newcommand{\prep}{\strref}                     
\newcommand{\shook}{\mathbin{\rightarrow_{\!s}}}
\newcommand{\tI}{^{\mathrm{I}}}                 
\newcommand{\tO}{^{\mathrm{O}}}                 
\newcommand{\OS}{\mathsf{OS4}}
\newcommand{\ITB}{\mathsf{ITB}}

\newcommand{\sysF}{\mathsf{F}}
\newcommand{\Lb}{\mathcal{L}_{\Box}}
\newcommand{\Lbd}{\mathcal{L}_{\Box\Diamond}}
\newcommand{\Thm}{\mathrm{Thm}}
\newcommand{\Th}{\mathrm{Th}}
\newcommand{\dwn}{{\downarrow_{\vdash}}}

\title{Fundamental Propositional Logic with Preconditional:\\
Strong Completeness, Finite Model Property,\\ and Modal Translations}
\author{[Zhicheng Chen]\thanks{[Institute of Philosophy, Chinese Academy of Sciences]}}
\date{\today}

\begin{document}
\maketitle

\begin{abstract}
Fundamental logic (Holliday 2023) is a non-classical logic based only on the
introduction and elimination rules for conjunction, disjunction, and negation
in a Fitch-style natural deduction system, while a preconditional (Holliday 2025) is a binary operation on a bounded lattice satisfying five natural axioms and subsuming Heyting implication, the Sasaki hook on ortholattices, and Lewis–Stalnaker-style conditionals satisfying flattening. We combine the
two by giving a consequence-relation presentation $\SysK$ whose algebras are
exactly Holliday's bounded lattices with a preconditional, and then studying
two natural extensions, $\SysT$ and $\SysF$, the latter being fundamental
propositional logic with a preconditional. For $\SysT$ and $\SysF$, we prove strong completeness with
respect to a purely relational semantics, using a canonical model whose
points are pairs of theories, and establish the finite model property and
hence decidability. Finally, following Holliday and Massas (2026), we adapt
their GMT- and Goldblatt-style embeddings to fundamental logic with a
preconditional. The resulting translations are full and faithful into
ortho-$\mathsf{S4}$ and intuitionistic $\mathsf{KTB}$, respectively. The new
conditional clauses send the preconditional to a boxed Sasaki hook on the
former side and to a strict intuitionistic conditional on the latter whose
classical $\mathsf{KTB}$ reading is equivalent to the Goldblatt translation of
the Sasaki hook. The frame constructions follow the reduct-and-companion
pattern of Holliday and Massas; the essential additional ingredient is the
semantic transfer calculation for the preconditional.
\end{abstract}

\tableofcontents

\section{Introduction}\label{sec:intro}

Fundamental logic, introduced in \cite{Holliday2023}, is the non-classical
logic based only on the introduction and elimination rules for conjunction,
disjunction, and negation (and the quantifiers) in a Fitch-style natural
deduction system. Adding to fundamental logic the rule of Reductio ad
Absurdum yields a proof system for orthologic \cite{Goldblatt1974}, while
adding the rule that Fitch called Reiteration yields a proof system for the
$\to$-free fragment of intuitionistic logic; adding both yields classical
logic. Semantically, fundamental logic is the logic of \emph{fundamental
lattices}---bounded lattices equipped with a weak
pseudocomplementation---and, at the level of frames, the logic of reflexive,
pseudo-symmetric \emph{relational frames} $(X,\open)$, where $\open$ is a
relation of ``openness'' between states
\cite{Holliday2022,Holliday2023,HM2026}.

A natural question, already raised in \cite{Holliday2023}, concerns the
addition of a conditional connective to fundamental logic. In
\cite{Holliday2025}, Holliday isolated the class of \emph{preconditionals}:
binary operations $\to$ on a bounded lattice satisfying five simple axioms,
each intuitively valid for indicative and counterfactual conditionals in
natural language. Preconditionals subsume Heyting implication, the Sasaki
hook on ortholattices, and Lewis--Stalnaker-style conditionals
\cite{Stalnaker1968,Lewis1973} satisfying the flattening axiom, and every
bounded lattice with a preconditional is
representable over a relational frame $(X,\open)$, where the conditional is
interpreted by
\[
A \to_{\open} B \;=\; \{\, x \in X \mid \forall y \open x\, (y \in A
\Rightarrow \exists z \nepo y\colon z \in A \cap B) \,\},
\]
generalizing the usual semantics of intuitionistic implication and of the
Sasaki hook.

A closely related recent development is Chen's study of fundamental
propositional logic with strict implication \cite{Chen2026}. Chen takes strict
implication as a primitive connective and shows that, once it is added, the
consequence relation over pseudo-reflexive, pseudo-symmetric frames comes
apart from the consequence relation over reflexive, pseudo-symmetric frames,
with separate axiomatizations and completeness theorems. The present paper is
complementary: rather than taking strict implication as primitive, we add the
broader class of Holliday preconditionals to fundamental logic. Strict
implication is recovered in two ways: semantically, when the openness relation
is specialized to the reflexive and transitive frames underlying
intuitionistic implication; and modally, as the image of the preconditional
under the Goldblatt-style translation.

Meanwhile, building on the modal treatment of fundamental logic in
\cite{Holliday2024}, Holliday and Massas \cite{HM2026} studied fundamental logic
``through the lens of modality'': they proved that the
G\"{o}del--McKinsey--Tarski (GMT) translation of intuitionistic logic into
classical $\mathsf{S4}$ \cite{Godel1933,McKinseyTarski1948} is a full and
faithful embedding of fundamental logic
into \emph{ortho}-$\mathsf{S4}$ ($\mathsf{S4}$ on an orthological base), and
that the Goldblatt translation of orthologic into classical $\mathsf{KTB}$ is
a full and faithful embedding of fundamental logic into an
\emph{intuitionistic} $\mathsf{KTB}$, in the style of Fischer Servi
\cite{FS1984}. Their proofs are driven by frame transformations between
fundamental frames and birelational frames (the \emph{fundamental reduct} of
a modal frame and, conversely, the \emph{modal companions} of a fundamental
frame).

Against this background, the present paper extends fundamental logic with Holliday's preconditionals and studies the resulting logic within the modal framework developed by Holliday and Massas.\footnote{This is the work
referred to in \cite[\S~1]{HM2026} as extending their Theorems 1.1 and 1.2 to
fundamental logic with a preconditional.} Our contributions are as follows.

\begin{enumerate}[label=(\arabic*),leftmargin=2em]
\item In Section~\ref{sec:proof}, we axiomatize, as consequence relations
$\vdash\, \subseteq \pow(\Fm)\times\Fm$, three systems $\SysK \subseteq \SysT
\subseteq \SysF$ in the language with $\wedge$, $\vee$, $\to$, $\bot$, and $\top$,
where $\neg\varphi := \varphi\to\bot$. The system $\SysK$ is a
syntactic consequence presentation of Holliday's preconditional base
\cite{Holliday2025}; $\SysT$ adds the conditional-identity axiom
$\vdash\alpha\to\alpha$ and \emph{ex contradictione}; and $\SysF$ adds
double negation
introduction. We show that the $\{\wedge,\vee,\neg\}$-fragment of $\SysF$ is
a conservative extension of fundamental logic, so that $\SysF$ deserves the
name \emph{fundamental logic with a preconditional}.
\item In Section~\ref{sec:algebra}, we give algebraic semantics: $\SysK$
(resp.~$\SysT$, $\SysF$) is sound and complete with respect to bounded
lattices with a preconditional (resp.~with conditional identity and
semicomplementation; resp.~further with double negation inflation).
$\SysF$-algebras are exactly fundamental lattices equipped with a
preconditional satisfying conditional identity, and they include all Heyting
algebras and all ortholattices with the Sasaki hook.
\item In Section~\ref{sec:relational}, we give relational semantics over
frames $(X,\open)$ in the style of
\cite{Holliday2023,Holliday2025,HM2026} and prove soundness: $\SysT$
(resp.~$\SysF$) is sound with respect to reflexive (resp.~reflexive and
pseudo-symmetric) frames. In Section~\ref{sec:canonical}, we prove
\emph{strong} completeness by a purely relational canonical model
construction, whose points are pairs $(\Gamma,\Delta)$ of a theory and a
counter-theory, in the spirit of the filter-ideal pairs of
\cite{Holliday2023,HM2026} but formulated entirely syntactically. In
Section~\ref{sec:fmp}, a filtration-style refinement of the construction
yields the finite model property and decidability.
\item In Sections~\ref{sec:os4} and~\ref{sec:itb}, we extend the two main
translation theorems of \cite{HM2026} to the language with a conditional. For
the ortho-$\mathsf{S4}$ target, we use the GMT-style preconditional clause
\[
(\alpha\to\beta)^{I} \;=\; \Box\bigl(\alpha^{I}\shook\beta^{I}\bigr),
\]
where $\shook$ is the Sasaki hook. Thus this is an adaptation of the usual GMT
implication clause: the boxed material implication is replaced by the boxed
Sasaki hook. The resulting translation fully and faithfully embeds $\SysF$
into ortho-$\mathsf{S4}$. For the intuitionistic $\mathsf{KTB}$ target, the
Goldblatt-style translation uses
\[
(\alpha\to\beta)^{O} \;=\; \Box\bigl(\alpha^{O} \to
\Diamond(\alpha^{O}\wedge\beta^{O})\bigr),
\]
where the unadorned arrow on the right is the primitive intuitionistic
implication of the target language. This translation also fully and
faithfully embeds $\SysF$. The conceptual source of the displayed Goldblatt translation clause for the preconditional is again the Sasaki hook: on classical $\mathsf{KTB}$-models the
right-hand-side formula is equivalent to the Goldblatt translation of the Sasaki hook. Reading
the arrow intuitionistically therefore gives the natural Fischer--Servi
continuation of that Goldblatt image.

To prove these two embedding results, we use the reduct-and-companion framework of \cite{HM2026}. At the level of frames, \(F_1\) and \(F_2\) are their fundamental reduct constructions, while \(G_1\) and \(G_2\) are same-carrier analogues of the corresponding modal companions. We spell out the frame calculations needed for the preconditional explicitly. The additional work is the transfer of preconditional truth sets and the verification that the canonical model of Section~\ref{sec:canonical} satisfies the relevant factoring conditions (Definition~\ref{def:strong-ps}).

\end{enumerate}

Methodologically, we follow \cite{Holliday2023,Holliday2025,HM2026} in
developing the proof theory, algebra, and relational semantics in parallel,
and we adopt their notation and terminology throughout, so that results from
those papers can be cited directly.

Conceptually, the preconditional extension preserves the characteristic
mediating role of fundamental logic. Just as fundamental logic is a common
base for orthologic and the negation fragment of intuitionistic logic,
fundamental logic with a preconditional is meant to play the corresponding
role for richer conditional settings: on the one hand, orthologic equipped with the Sasaki hook; on the other, full intuitionistic logic equipped with Heyting implication.

\section{Proof systems}\label{sec:proof}

\subsection{Language}

Fix a countable set $\PL = \{p_0, p_1, \dots\}$ of propositional variables.
The language $\Fm$ is generated by the grammar
\[
\varphi \;::=\; p \mid \bot \mid \top \mid (\varphi \wedge \varphi) \mid (\varphi \vee
\varphi) \mid (\varphi \to \varphi),
\]
where $p \in \PL$. In the conditional-first presentation used below, the
bounds $\bot$ and $\top$ are both written as primitive constants, and we define
\[
\neg\alpha := \alpha \to \bot.
\]
Thus, in contrast to \cite{Holliday2023,HM2026}, negation is not primitive
but defined from the conditional, as in intuitionistic logic. This is a matter
of presentation: although $\bot$ is already interderivable with $\neg\top$ in
$\SysK$ (Proposition~\ref{prop:derived-K}), eliminating $\bot$ would require
taking $\neg$ as primitive instead, since negation is here defined from
$\bot$. In $\SysT$ and $\SysF$, Proposition~\ref{prop:derived-T} further gives
$\top \dashv\vdash \bot\to\bot$. We use
$\alpha,\beta,\varphi,\psi,\chi,\gamma,\dots$ for formulas and
$\Gamma,\Delta,\Theta,\dots$ for sets of formulas. We write
$\Gamma,\varphi\vdash\psi$ for $\Gamma\cup\{\varphi\}\vdash\psi$, and
$\alpha\dashv\vdash\beta$ for $\alpha\vdash\beta$ and $\beta\vdash\alpha$.

\subsection{The system \texorpdfstring{$\SysK$}{K}}

Our base system is a consequence relation whose conditional postulates are
exactly the preconditional axioms of \cite{Holliday2025}, in rule form.

\begin{definition}[System $\SysK$]\label{def:K}
Let $\vK$ be the smallest relation ${\vdash} \subseteq \pow(\Fm)\times\Fm$
satisfying the following conditions, for all $\Gamma,\Delta \subseteq \Fm$
and $\alpha,\beta,\varphi,\psi \in \Fm$:
\begin{itemize}[leftmargin=3.2em,itemsep=2pt]
\item[\rn{A}] $\Gamma, \varphi \vdash \varphi$;
\item[\rn{Cut}] if $\Gamma, \psi \vdash \varphi$ and $\Delta \vdash \psi$,
then $\Gamma \cup \Delta \vdash \varphi$;
\item[\rn{$\wedge$I}] $\{\varphi, \psi\} \vdash \varphi \wedge \psi$;
\item[\rn{$\wedge$E}] $\varphi \wedge \psi \vdash \varphi$ \ and \ $\varphi
\wedge \psi \vdash \psi$;
\item[\rn{$\vee$I}] $\varphi \vdash \varphi \vee \psi$ \ and \ $\psi \vdash
\varphi \vee \psi$;
\item[\rn{$\vee$E}] if $\varphi \vdash \alpha$ and $\psi \vdash \alpha$, then
$\varphi \vee \psi \vdash \alpha$;
\item[\rn{$\bot$E}] $\bot \vdash \varphi$;
\item[\rn{$\top$}] $\vdash \top$;
\item[\rn{$\top\to$E}] $\top \to \varphi \vdash \varphi$;
\item[\rn{Con}] if $\alpha \wedge \beta \vdash \psi$, then $\alpha \to \beta
\vdash \alpha \to \psi$;
\item[\rn{$\wedge{\vdash}\to$}] $\alpha \wedge \beta \vdash \alpha \to
\beta$;
\item[\rn{Imp}] if $\alpha \wedge \beta \vdash \varphi$, then $\varphi \to
(\alpha \to \beta) \vdash \varphi \wedge \alpha \to \beta$;
\item[\rn{Rep}] if $\alpha \dashv\vdash \alpha'$, then $\alpha \to \beta
\dashv\vdash \alpha' \to \beta$.
\end{itemize}
The rules \rn{A}, \rn{Cut}, \rn{$\wedge$I}, \rn{$\wedge$E}, \rn{$\vee$I},
\rn{$\vee$E}, \rn{$\bot$E}, and \rn{$\top$}---those not involving
$\to$---are called the \emph{basic rules}.
\end{definition}

Some comments on the rules are in order. \rn{Con} is a rule of
\emph{cautious right weakening}: under the supposition of $\alpha$, one may
replace the consequent $\beta$ by anything that follows from $\alpha$
together with $\beta$. \rn{$\wedge{\vdash}\to$} is \emph{conjunctive
sufficiency}. \rn{Imp} is a rule form of \emph{importation} for conditional
antecedents whose conjunction entails the outer antecedent; as we will see in
the proof of Theorem~\ref{thm:algebraic}, it corresponds exactly to axiom~5 of
preconditionals (the left-to-right half of the \emph{flattening} axiom of
\cite{Mandelkern2024}, see \cite[\S~2.5]{Holliday2025}). \rn{Rep} says that
the conditional is congruential in its antecedent; it is needed because,
unlike consequents (cf.~\rn{Con}), antecedents are not governed by any
monotonicity rule.

This definition is not meant to introduce a new algebraic base. On the
algebraic side, $\SysK$-algebras will be exactly bounded lattices equipped
with a preconditional in the sense of \cite{Holliday2025}. What is needed
here is the proof-theoretic counterpart: a consequence relation in which the
extensions $\SysT$ and $\SysF$, the canonical constructions, and the modal
translations can be formulated syntactically.

\begin{remark}[Monotonicity and finitarity]\label{rem:mon-fin}
The weakening rule
\begin{itemize}[leftmargin=3.2em]
\item[\rn{Mon}] if $\Gamma \vdash \varphi$, then $\Gamma \cup \Delta \vdash
\varphi$
\end{itemize}
is derivable from \rn{A} and \rn{Cut}. Moreover, $\vK$ is \emph{finitary}:
if $\Gamma \vK \varphi$, then $\Gamma_0 \vK \varphi$ for some finite
$\Gamma_0 \subseteq \Gamma$. Indeed, the relation
\[
{\vdash'} = \{(\Gamma,\varphi) \mid \Gamma_0 \vdash^{\mathrm{fin}} \varphi
\text{ for some finite } \Gamma_0 \subseteq \Gamma\},
\]
where $\vdash^{\mathrm{fin}}$ is generated by the rules of
Definition~\ref{def:K} restricted to finite premise sets, is easily checked
to be closed under all the rules, whence ${\vK} \subseteq {\vdash'}$. The
same applies to the systems $\SysT$ and $\SysF$ below. Consequently, by
\rn{$\wedge$I} and \rn{Cut}, for any of our systems $\SysS$:
\[
\Gamma \vdash_{\SysS} \varphi \quad\text{iff}\quad \gamma_1 \wedge \dots
\wedge \gamma_n \vdash_{\SysS} \varphi \text{ for some } \gamma_1, \dots,
\gamma_n \in \Gamma,
\]
where for $n = 0$ the left-hand side is read as $\top \vdash_{\SysS}
\varphi$ (equivalent to $\varnothing\vdash_{\SysS}\varphi$ by \rn{$\top$},
\rn{Mon}, and \rn{Cut}).
\end{remark}

\subsection{Derived rules of \texorpdfstring{$\SysK$}{K}}

\begin{proposition}[Derived rules of $\SysK$]\label{prop:derived-K}
The following are derivable in $\SysK$ (and hence in any extension of
$\SysK$):
\begin{itemize}[leftmargin=3.6em,itemsep=2pt]
\item[\rn{$\vdash\top\to$}] $\varphi \vdash \top \to \varphi$;
\item[\rn{$\to$Mon}] if $\beta \vdash \psi$, then $\alpha \to \beta \vdash
\alpha \to \psi$;
\item[\rn{$\bot\equiv\neg\top$}] $\bot \dashv\vdash \neg\top$;
\item[\rn{CP}] $\alpha \to \beta \vdash \alpha \to (\alpha \wedge \beta)$;
\item[\rn{Imp$^-$}] $\varphi \to (\varphi \wedge \alpha \to \beta) \vdash
\varphi \wedge \alpha \to \beta$;
\item[\rn{$\to\wedge$}] $\varphi \to (\alpha \wedge \beta) \vdash \varphi
\wedge \alpha \to \beta$;
\item[\rn{$\to\neg$}] $\alpha \to \neg\beta \vdash \neg(\alpha \wedge
\beta)$;
\item[\rn{$\neg$Ant}] if $\alpha \vdash \beta$, then $\neg\beta \vdash
\neg\alpha$;
\item[\rn{CAS}] if $\alpha \wedge \beta \vdash \varphi \to \psi$ and $\varphi
\wedge \psi \vdash \alpha$, then $\alpha \to \beta \vdash \alpha \wedge
\varphi \to \psi$;
\item[\rn{CAS$'$}] if $\alpha \wedge \beta \vdash \varphi \to \psi$ and
$\varphi \vdash \alpha$, then $\alpha \to \beta \vdash \varphi \to \psi$;
\item[\rn{$\wedge{\vdash}\neg$}] if $\alpha \wedge \beta \vdash \neg\varphi$,
then $\alpha \to \beta \vdash \neg(\alpha \wedge \varphi)$;
\item[\rn{$\wedge{\vdash}\bot$}] if $\alpha \wedge \beta \vdash \bot$, then
$\alpha \to \beta \vdash \neg\alpha$;
\item[\rn{$\vee$CAS}] if $\alpha \wedge \beta \vdash \neg\varphi \vee \psi$
and $\psi \vdash \alpha \wedge \varphi$, then $\alpha \to \beta \vdash \alpha
\wedge \varphi \to \psi$.
\end{itemize}
\end{proposition}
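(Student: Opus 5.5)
The plan is to derive each rule in turn, using only the rules of Definition~\ref{def:K}, \rn{Mon}, and earlier items of the list. Throughout I will freely use the basic facts that $\wedge$ is associative, commutative and idempotent up to $\dashv\vdash$ (from \rn{$\wedge$I}, \rn{$\wedge$E}, \rn{Cut}). For \rn{$\vdash\top\to$}: from \rn{$\top$} and \rn{$\wedge$I} we get $\varphi\vdash\top\wedge\varphi$, and \rn{$\wedge{\vdash}\to$} gives $\top\wedge\varphi\vdash\top\to\varphi$; then apply \rn{Cut}. For \rn{$\to$Mon}: if $\beta\vdash\psi$, then $\alpha\wedge\beta\vdash\beta\vdash\psi$, so \rn{Con} applies. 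For \rn{$\bot\equiv\neg\top$}: $\neg\top=\top\to\bot\vdash\bot$ is an instance of \rn{$\top\to$E}, and $\bot\vdash\neg\top$ is an instance of \rn{$\bot$E}. \rn{CP} is \rn{Con} applied to $\alpha\wedge\beta\vdash\alpha\wedge\beta$.

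For the importation-type rules the idea is always: first push the consequent into the needed shape with \rn{Con}/\rn{$\to$Mon}, then apply \rn{Imp}, and finally tidy the antecedent with \rn{Rep}.
\begin{itemize}
\item \rn{Imp$^-$}: apply \rn{Imp} with outer antecedent $\varphi$ and inner antecedent $\varphi\wedge\alpha$. Its side condition $(\varphi\wedge\alpha)\wedge\beta\vdash\varphi$ holds, so we get $\varphi\wedge(\varphi\wedge\alpha)\to\beta$. Since $\varphi\wedge(\varphi\wedge\alpha)\dashv\vdash\varphi\wedge\alpha$, \rn{Rep} finishes.
\item \rn{$\to\wedge$}: from $\varphi\wedge(\alpha\wedge\beta)\vdash(\varphi\wedge\alpha)\wedge\beta\vdash\varphi\wedge\alpha\to\beta$ (using \rn{$\wedge{\vdash}\to$}), \rn{Con} gives $\varphi\to(\alpha\wedge\beta)\vdash\varphi\to(\varphi\wedge\alpha\to\beta)$. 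Then apply \rn{Imp$^-$}.
\item \rn{$\to\neg$}: this is \rn{Imp} with consequent $\bot$, using the side condition $\alpha\wedge\beta\vdash\alpha$.
\item \rn{$\neg$Ant}: given $\alpha\vdash\beta$, \rn{$\bot$E} and \rn{$\to$Mon} give $\beta\to\bot\vdash\beta\to(\alpha\to\bot)$. \rn{Imp} (side condition $\alpha\wedge\bot\vdash\beta$ by \rn{$\bot$E}) yields $\beta\wedge\alpha\to\bot$. Since $\alpha\dashv\vdash\beta\wedge\alpha$ by the hypothesis, \rn{Rep} gives $\neg\alpha$.
\end{itemize}

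For \rn{CAS}: \rn{Con} applied to $\alpha\wedge\beta\vdash\varphi\to\psi$ gives $\alpha\to\beta\vdash\alpha\to(\varphi\to\psi)$. Then \rn{Imp}, whose side condition is exactly $\varphi\wedge\psi\vdash\alpha$, gives $\alpha\wedge\varphi\to\psi$. \rn{CAS$'$} follows because $\varphi\vdash\alpha$ implies $\varphi\wedge\psi\vdash\alpha$ and $\alpha\wedge\varphi\dashv\vdash\varphi$, so \rn{CAS} plus \rn{Rep} applies. \rn{$\wedge{\vdash}\neg$} is \rn{CAS} with $\psi=\bot$, the side condition $\varphi\wedge\bot\vdash\alpha$ holding by \rn{$\bot$E}. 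For \rn{$\wedge{\vdash}\bot$}, from $\alpha\wedge\beta\vdash\bot\vdash\neg\alpha$ apply \rn{$\wedge{\vdash}\neg$} with $\varphi=\alpha$, then use \rn{Rep} with $\alpha\wedge\alpha\dashv\vdash\alpha$.

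The step needing the most care is \rn{$\vee$CAS}, since no distributivity is available. The plan is to reduce it to \rn{CAS} by showing $\neg\varphi\vee\psi\vdash\varphi\to\psi$ via \rn{$\vee$E}, treating each disjunct separately:
\begin{itemize}
\item $\neg\varphi=\varphi\to\bot\vdash\varphi\to\psi$ by \rn{$\bot$E} and \rn{$\to$Mon};
\item $\psi\vdash\varphi\wedge\psi$ (since $\psi\vdash\alpha\wedge\varphi\vdash\varphi$), and $\varphi\wedge\psi\vdash\varphi\to\psi$ by \rn{$\wedge{\vdash}\to$}.
\end{itemize}
\rn{Cut} then gives $\alpha\wedge\beta\vdash\varphi\to\psi$. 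The remaining side condition of \rn{CAS}, $\varphi\wedge\psi\vdash\alpha$, follows from $\psi\vdash\alpha\wedge\varphi$, which completes the derivation.
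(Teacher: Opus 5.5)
Your derivations are correct and follow the same route as the paper for the items it proves explicitly (\rn{$\to\neg$}, \rn{$\neg$Ant}, \rn{CAS}, \rn{CAS$'$}), and your derivations of the remaining items, which the paper leaves implicit, are sound. The one slip is in \rn{$\to\neg$}: with outer antecedent $\alpha$, inner antecedent $\beta$ and consequent $\bot$, the side condition of \rn{Imp} is $\beta\wedge\bot\vdash\alpha$ (which holds by \rn{$\wedge$E}, \rn{$\bot$E}, \rn{Cut}), not $\alpha\wedge\beta\vdash\alpha$.
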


\begin{proof}
\rn{$\to\neg$}: Instantiate \rn{Imp} with $\alpha := \beta$, $\beta :=
\bot$, and outer antecedent $\alpha$: since $\beta \wedge \bot \vdash
\alpha$ by \rn{$\wedge$E}, \rn{$\bot$E}, and \rn{Cut}, we get $\alpha \to
(\beta \to \bot) \vdash \alpha \wedge \beta \to \bot$, i.e., $\alpha \to
\neg\beta \vdash \neg(\alpha \wedge \beta)$.

\rn{$\neg$Ant}: Suppose $\alpha \vdash \beta$. As in the previous item (with
the roles of $\alpha,\beta$ swapped), \rn{Imp} yields $\beta \to (\alpha \to
\bot) \vdash \beta \wedge \alpha \to \bot$. From $\alpha \vdash \beta$ we
get $\beta \wedge \alpha \dashv\vdash \alpha$ (using \rn{$\wedge$I},
\rn{$\wedge$E}, \rn{Cut}), so \rn{Rep} gives $\beta \wedge \alpha \to \bot
\dashv\vdash \alpha \to \bot$, whence $\beta \to (\alpha \to \bot) \vdash
\neg\alpha$. On the other hand, $\bot \vdash \alpha \to \bot$ by
\rn{$\bot$E}, so \rn{$\to$Mon} gives $\neg\beta = \beta \to \bot \vdash
\beta \to (\alpha \to \bot)$. Conclude by \rn{Cut}.

\rn{CAS}: From $\alpha \wedge \beta \vdash \varphi \to \psi$ and \rn{Con},
$\alpha \to \beta \vdash \alpha \to (\varphi \to \psi)$. From $\varphi
\wedge \psi \vdash \alpha$ and \rn{Imp}, $\alpha \to (\varphi \to \psi)
\vdash \alpha \wedge \varphi \to \psi$. Conclude by \rn{Cut}.

\rn{CAS$'$}: Since $\varphi \wedge \psi \vdash \varphi \vdash \alpha$,
\rn{CAS} gives $\alpha \to \beta \vdash \alpha \wedge \varphi \to \psi$.
From $\varphi \vdash \alpha$ we get $\alpha \wedge \varphi \dashv\vdash
\varphi$, so \rn{Rep} yields $\alpha \to \beta \vdash \varphi \to \psi$.
\rn{CAS$'$} follows.
\end{proof}

\subsection{The systems \texorpdfstring{$\SysT$}{T} and
\texorpdfstring{$\SysF$}{F}}

\begin{definition}[Systems $\SysT$ and $\SysF$]\label{def:TF}
Let $\vT$ be the smallest relation satisfying all conditions of
Definition~\ref{def:K}, together with:
\begin{itemize}[leftmargin=3.2em,itemsep=2pt]
\item[\rn{$\to$Id}] $\vdash \alpha \to \alpha$;
\item[\rn{Exp}] $\{\alpha, \neg\alpha\} \vdash \bot$.
\end{itemize}
Let $\vF$ be the smallest relation satisfying all conditions of $\SysT$
together with:
\begin{itemize}[leftmargin=3.2em]
\item[\rn{$\neg\neg$I}] $\alpha \vdash \neg\neg\alpha$.
\end{itemize}
\end{definition}

\rn{$\to$Id} is the proof-theoretic form of conditional identity (see Definition~\ref{def:algebras}). As
Proposition~\ref{prop:derived-T} shows, over $\SysK$ it yields
\rn{DT$_0$}, the ``easy'' half of the deduction theorem restricted to
categorical premises. Conversely, \rn{DT$_0$} yields \rn{$\to$Id} by applying it to $\chi\vdash\chi$. 
The ``hard'' contextual half (from $\Gamma,\alpha\vdash\beta$ to
$\Gamma\vdash\alpha\to\beta$) fails, as it must for any conditional
invalidating antecedent strengthening. \rn{Exp} makes the defined negation a
genuine (semi)complementation, corresponding to the elimination rule for
$\neg$ in fundamental logic, and \rn{$\neg\neg$I} corresponds to the
half of double negation determined by the introduction rule for $\neg$
\cite[\S~2]{Holliday2023}.

\begin{proposition}[Derived rules of $\SysT$]\label{prop:derived-T}
The following are derivable in $\SysT$ (hence also in $\SysF$):
\begin{itemize}[leftmargin=3.6em,itemsep=2pt]
\item[\rn{DT$_0$}] if $\alpha \vdash \beta$, then $\vdash \alpha \to
\beta$;
\item[\rn{$\top\equiv\bot\to\bot$}] $\top \dashv\vdash \bot\to\bot$;
\item[\rn{DT$\to$}] if $\varphi \vdash \alpha \to \beta$ and $\alpha \wedge
\beta \vdash \varphi$, then $\vdash (\varphi \wedge \alpha) \to \beta$;
\item[\rn{$\neg$MP}] $\{\alpha, \alpha \to \neg\beta, \beta\} \vdash \bot$;
\item[\rn{Con$\bot$}] if $\alpha \vdash \neg\alpha$, then $\alpha \vdash
\bot$.
\end{itemize}
\end{proposition}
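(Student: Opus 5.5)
We derive the five items in the order listed, each from \rn{$\to$Id} and \rn{Exp} together with the $\SysK$ rules and the derived rules of Proposition~\ref{prop:derived-K}.

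\rn{DT$_0$}: suppose $\alpha\vdash\beta$. From \rn{A} and \rn{$\wedge$E} we have $\alpha\wedge\alpha\vdash\beta$, so \rn{Con} gives $\alpha\to\alpha\vdash\alpha\to\beta$. Combining this with \rn{$\to$Id}, $\vdash\alpha\to\alpha$, by \rn{Cut} yields $\vdash\alpha\to\beta$. One could equally use \rn{$\to$Mon} in place of \rn{Con}.

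\rn{$\top\equiv\bot\to\bot$}: the direction $\bot\to\bot\vdash\top$ follows from \rn{$\top$} and \rn{Mon}. The converse follows from \rn{$\to$Id} instantiated at $\bot$ and \rn{Mon}.

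\rn{DT$\to$}: suppose $\varphi\vdash\alpha\to\beta$ and $\alpha\wedge\beta\vdash\varphi$. By \rn{DT$_0$} applied to the first hypothesis, $\vdash\varphi\to(\alpha\to\beta)$. The second hypothesis is exactly the side condition of \rn{Imp}, which gives $\varphi\to(\alpha\to\beta)\vdash\varphi\wedge\alpha\to\beta$. \rn{Cut} then yields $\vdash(\varphi\wedge\alpha)\to\beta$.

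\rn{$\neg$MP}: by \rn{$\to\neg$}, $\alpha\to\neg\beta\vdash\neg(\alpha\wedge\beta)$. From $\{\alpha,\beta\}$ we get $\alpha\wedge\beta$ by \rn{$\wedge$I}. \rn{Exp} instantiated at $\alpha\wedge\beta$ gives $\{\alpha\wedge\beta,\neg(\alpha\wedge\beta)\}\vdash\bot$. Two applications of \rn{Cut} then give $\{\alpha,\alpha\to\neg\beta,\beta\}\vdash\bot$.

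\rn{Con$\bot$}: suppose $\alpha\vdash\neg\alpha$. \rn{Exp} gives $\{\alpha,\neg\alpha\}\vdash\bot$. Cutting $\neg\alpha$ against the hypothesis gives $\{\alpha\}\vdash\bot$, since $\{\alpha\}\cup\{\alpha\}=\{\alpha\}$. Equivalently, this is \rn{$\neg$MP} with $\beta:=\alpha$ after noting $\neg\alpha\vdash\alpha\to\neg\alpha$ via \rn{DT$_0$}; the direct route is simpler.

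The only step requiring care is \rn{DT$\to$}. There the point is to recognize that the hypothesis $\alpha\wedge\beta\vdash\varphi$ is precisely the premise \rn{Imp} needs. Once \rn{DT$_0$} supplies the categorical conditional $\vdash\varphi\to(\alpha\to\beta)$, the rest is bookkeeping with \rn{Cut}. Everything else is routine.
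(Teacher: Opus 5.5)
Your proposal is correct and follows the paper's proof item by item. The one variation is in \rn{DT$_0$}, where you apply \rn{Con} to $\alpha\wedge\alpha\vdash\beta$ (which needs \rn{$\wedge$E}, the hypothesis, and \rn{Cut}); this just inlines the derivation of the \rn{$\to$Mon} that the paper cites, and in the aside to \rn{Con$\bot$} the relevant fact is really $\vdash\alpha\to\neg\alpha$, so the direct route you give (the paper's) is the one to keep.
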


\begin{proof}
\rn{DT$_0$}: if $\alpha\vdash\beta$, then \rn{$\to$Mon} gives
$\alpha\to\alpha\vdash\alpha\to\beta$; cut this with
$\vdash\alpha\to\alpha$ from \rn{$\to$Id}.
\rn{$\top\equiv\bot\to\bot$}: $\vdash\bot\to\bot$ is the instance of
\rn{$\to$Id} with $\alpha=\bot$; hence $\top\vdash\bot\to\bot$ by \rn{Mon}.
Conversely, $\bot\to\bot\vdash\top$ follows from \rn{$\top$} and \rn{Mon}.
\rn{DT$\to$}: from $\alpha \wedge \beta \vdash \varphi$ and \rn{Imp}, $\varphi
\to (\alpha \to \beta) \vdash (\varphi \wedge \alpha) \to \beta$; from $\varphi
\vdash \alpha \to \beta$ and \rn{DT$_0$}, $\vdash \varphi \to (\alpha \to
\beta)$; apply \rn{Cut}. \rn{$\neg$MP}: by \rn{$\to\neg$}, $\alpha \to
\neg\beta \vdash \neg(\alpha \wedge \beta)$; by \rn{$\wedge$I}, $\{\alpha,
\beta\} \vdash \alpha \wedge \beta$; by \rn{Exp}, $\{\alpha \wedge \beta,
\neg(\alpha \wedge \beta)\} \vdash \bot$; conclude by \rn{Cut} (twice).
\rn{Con$\bot$}: from $\alpha \vdash \neg\alpha$, \rn{A}, \rn{Exp}, and
\rn{Cut}.
\end{proof}


\subsection{Relation to fundamental logic}\label{subsec:rel-FL}

Recall from \cite[Def.~2.1]{Holliday2023} that an \emph{intro-elim logic} in
the language with $\wedge$, $\vee$, $\neg$ is a binary relation on formulas
satisfying reflexivity, transitivity, the lattice rules for $\wedge$ and
$\vee$, $\varphi \vdash \neg\neg\varphi$, $\varphi \wedge \neg\varphi \vdash
\psi$, and the contraposition rule (if $\varphi\vdash\psi$ then
$\neg\psi\vdash\neg\varphi$); fundamental logic $\vdash_{\mathcal{F}}$ is
the smallest intro-elim logic \cite[Prop.~3.8]{Holliday2023}.

\begin{proposition}\label{prop:F-extends-FL}
Regard the $\{\wedge,\vee,\neg\}$-language as included in $\Fm$ via
$\neg\alpha := \alpha\to\bot$. Then the restriction of $\vF$ to single
premises in this fragment is an intro-elim logic; hence
$\varphi\vdash_{\mathcal{F}}\psi$ implies $\varphi \vF \psi$.
\end{proposition}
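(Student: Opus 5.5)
We check each clause of the definition of an intro-elim logic for the single-premise restriction of $\vF$ to the $\{\wedge,\vee,\neg\}$-fragment, with $\neg\alpha := \alpha\to\bot$. Then the minimality of $\vdash_{\mathcal{F}}$ \cite[Prop.~3.8]{Holliday2023} gives $\vdash_{\mathcal{F}} \subseteq \vF$ on this fragment.

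Most clauses are immediate from the rules already available.
\begin{itemize}
\item \emph{Reflexivity} is \rn{A} with $\Gamma=\varnothing$, and \emph{transitivity} is \rn{Cut} with singleton premise sets.
\item The \emph{lattice rules} are exactly \rn{$\wedge$I} and \rn{$\wedge$E}, together with \rn{$\vee$I} and \rn{$\vee$E}. For the single-premise form of $\wedge$-introduction (if $\chi\vdash\varphi$ and $\chi\vdash\psi$ then $\chi\vdash\varphi\wedge\psi$), we combine \rn{$\wedge$I} with two applications of \rn{Cut}. This yields $\{\chi\}\cup\{\chi\}=\{\chi\}\vdash\varphi\wedge\psi$, since premises are sets.
\item $\varphi\vdash\neg\neg\varphi$ is precisely \rn{$\neg\neg$I}.
\item \emph{Contraposition} (if $\varphi\vdash\psi$ then $\neg\psi\vdash\neg\varphi$) is the derived rule \rn{$\neg$Ant} of Proposition~\ref{prop:derived-K}. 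It holds in $\SysK$ and hence in $\SysF$.
\end{itemize}

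The one clause needing a small argument is explosion, $\varphi\wedge\neg\varphi\vdash\psi$.
\begin{enumerate}
\item From \rn{Exp} we have $\{\varphi,\neg\varphi\}\vdash\bot$.
\item Cut this twice against $\varphi\wedge\neg\varphi\vdash\varphi$ and $\varphi\wedge\neg\varphi\vdash\neg\varphi$, both instances of \rn{$\wedge$E}. This gives $\varphi\wedge\neg\varphi\vdash\bot$.
\item Cut once more with $\bot\vdash\psi$ from \rn{$\bot$E}.
\end{enumerate}

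Care is needed only with the set-based form of \rn{Cut}: eliminating both members of $\{\varphi,\neg\varphi\}$ requires performing the cuts sequentially. For the first cut, write the premise set as $\{\neg\varphi\},\varphi$ and cut with $\varphi\wedge\neg\varphi\vdash\varphi$. This gives $\{\neg\varphi,\varphi\wedge\neg\varphi\}\vdash\bot$. Then cut $\neg\varphi$ against $\varphi\wedge\neg\varphi\vdash\neg\varphi$, which gives $\{\varphi\wedge\neg\varphi\}\vdash\bot$.

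Having verified all the clauses, the restriction is an intro-elim logic. Since $\vdash_{\mathcal{F}}$ is the smallest such logic, $\varphi\vdash_{\mathcal{F}}\psi$ implies $\varphi\vF\psi$.
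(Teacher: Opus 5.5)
Your proposal is correct and follows the paper's proof clause by clause: reflexivity and transitivity from \rn{A} and \rn{Cut}, the lattice rules from the basic rules, double negation from \rn{$\neg\neg$I}, explosion from \rn{$\wedge$E}, \rn{Exp}, \rn{$\bot$E} and \rn{Cut}, contraposition from \rn{$\neg$Ant}, and then minimality of $\vdash_{\mathcal{F}}$. Your extra bookkeeping with the set-based \rn{Cut} (for the single-premise form of $\wedge$-introduction and for discharging both members of $\{\varphi,\neg\varphi\}$) only spells out details the paper leaves implicit.
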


\begin{proof}
Reflexivity and transitivity hold by \rn{A} and \rn{Cut}; the $\wedge$ and
$\vee$ rules are among the basic rules; $\varphi \vdash \neg\neg\varphi$ is
\rn{$\neg\neg$I}; $\varphi \wedge \neg\varphi \vdash \psi$ follows from
\rn{$\wedge$E}, \rn{Exp}, \rn{$\bot$E}, and \rn{Cut}; contraposition is
\rn{$\neg$Ant} (Proposition~\ref{prop:derived-K}).
\end{proof}

In fact the converse holds as well: $\vF$ is a \emph{conservative} extension
of fundamental logic. This will follow semantically from relational
soundness, since every fundamental frame in the sense of
\cite{Holliday2023,HM2026} interprets the conditional
(Corollary~\ref{cor:conservative}).

\section{Algebraic semantics}\label{sec:algebra}

We now match the systems of Section~\ref{sec:proof} with classes of bounded
lattices carrying a preconditional. Throughout, $L$ is a bounded lattice
with operations $\wedge, \vee$, bounds $0, 1$, and we trust that no
confusion will arise from using the same symbols as in $\Fm$.

\subsection{Preconditionals}

We recall the central definition of \cite{Holliday2025}.

\begin{definition}[{\cite[Def.~1]{Holliday2025}}]\label{def:precond}
A \emph{preconditional} on a bounded lattice $L$ is a binary operation $\to$
on $L$ satisfying, for all $a, b, c \in L$:
\begin{enumerate}[label=P\arabic*., leftmargin=2.6em]
\item $1 \to a \leq a$;
\item $a \wedge b \leq a \to b$;
\item $a \to b \leq a \to (a \wedge b)$;
\item $a \to (b \wedge c) \leq a \to b$;
\item $a \to ((a \wedge b) \to c) \leq (a \wedge b) \to c$.
\end{enumerate}
Given $\to$, we define $\neg a := a \to 0$.
\end{definition}

By \cite[Prop.~1]{Holliday2025}, the operation $\neg$ derived from a
preconditional is a \emph{precomplementation}: it is antitone ($a \leq b$
implies $\neg b \leq \neg a$) and satisfies $\neg 1 = 0$. The axioms P1--P5
are mutually independent \cite[Fact~1]{Holliday2025}.

The following arithmetic consists of short consequences of P1--P5 together
with the antitonicity of preconditional negation from
\cite[Prop.~1]{Holliday2025}; it will be used repeatedly.

\begin{lemma}\label{lem:precond-arith}
Let $\to$ be a preconditional on $L$ and $a, b, c, x \in L$.
\begin{enumerate}[label=(\arabic*)]
\item \textup{(Consequent monotonicity)} If $b \leq c$, then $a \to b \leq a
\to c$.
\item $a \to b = a \to (a \wedge b)$.
\item \textup{(Cautious antecedent strengthening)} If $a \wedge b \leq x$,
then $a \to b \leq (x \wedge a) \to b$.
\item \textup{(Importation)} If $a \wedge b \leq x$, then $x \to (a \to b)
\leq (x \wedge a) \to b$.
\item $a \to \neg b \leq \neg(a \wedge b)$, and $\neg b \leq \neg (a\wedge
b)$.
\end{enumerate}
\end{lemma}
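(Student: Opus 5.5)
Each item follows by short lattice calculations from P1--P5 and the antitonicity of $\neg$ \cite[Prop.~1]{Holliday2025}; the order below lets later items use earlier ones.

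For (1), suppose $b\le c$, so $b=b\wedge c$. By P3, $a\to b\le a\to(a\wedge b)$. Since $a\wedge b=a\wedge b\wedge c$, applying P4 to $a\to((a\wedge b)\wedge c)$ gives
\[
a\to(a\wedge b)=a\to((a\wedge b)\wedge c)\le a\to(a\wedge b)\wedge\ldots
\]
The cleaner route is to write $a\wedge b=c\wedge(a\wedge b)$ and apply P4 in the form $a\to(c\wedge d)\le a\to c$ with $d=a\wedge b$. This yields $a\to b\le a\to(a\wedge b)\le a\to c$.

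For (2), the inequality $\le$ is P3. For $\ge$, use P4 with $a\to(a\wedge b)\le a\to b$, viewing $a\wedge b$ as $b\wedge a$.

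Item (4) is the main step and is a reduction to P5. Assume $a\wedge b\le x$. Then $x\wedge a\wedge b=a\wedge b$, and we want $x\to(a\to b)\le(x\wedge a)\to b$.
\begin{itemize}
\item By (2), $a\to b=a\to(a\wedge b)$.
\item Since $a\wedge b=(x\wedge a)\wedge b$, (1) gives $a\to(a\wedge b)\le a\to((x\wedge a)\wedge b)$.
\end{itemize}
This only rewrites the consequent, while the antecedent of P5 must be $x\wedge a$ rather than $a$. So one cannot simply monotonize the inner antecedent; a real argument is needed, and this is the anticipated obstacle.

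The correct approach exploits the fact that P5 is stated with inner antecedent $x\wedge a$. I would prove the inequality $x\to(a\to b)\le x\to((x\wedge a)\to b)$ and then apply P5 with $(x,a,b)$ in place of $(a,b,c)$. To get that intermediate inequality, use (1) on the outer consequent. It then suffices to show the needed instance under the outer $x$, namely that the relevant parts agree "after conjoining with $x$". Via (2), the outer consequent can be replaced by $x\wedge(a\to b)$, and (1) applies once $x\wedge(a\to b)\le(x\wedge a)\to b$ is established. That reduces (4) to (3), and I would prove (3) from P2 and P5 directly. If this circularity persists, the fallback is to follow the pattern of \cite[Prop.~1]{Holliday2025} and derive (3) and (4) together. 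Under the hypothesis $a\wedge b\le x$ one has $a\wedge b=(x\wedge a)\wedge b$, and P2 gives $a\wedge b\le(x\wedge a)\to b$, which gives the base case. I expect pinning down the exact chain for (3)/(4) to be the hardest part.

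For (5), $a\to\neg b=a\to(b\to 0)$. Since $a\wedge 0\le b$, the importation item (4) (with $x:=a$, inner pair $(b,0)$, suitably arranged) gives $a\to(b\to 0)\le(a\wedge b)\to 0=\neg(a\wedge b)$. This mirrors the syntactic derivation of \rn{$\to\neg$}. The second inequality $\neg b\le\neg(a\wedge b)$ is immediate from antitonicity, since $a\wedge b\le b$.
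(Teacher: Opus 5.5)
Your items (1) and (2), your reduction of (4) to (3) (apply (1) to the outer consequent, then P5 with outer antecedent $x$), and the antitonicity half of (5) are correct and match the paper. The gap is item (3): you never prove it. You say you ``would prove (3) from P2 and P5 directly.'' You then worry about circularity, propose an unspecified joint derivation of (3) and (4), and stop after the base case $a\wedge b\le(x\wedge a)\to b$. Your (4) depends on (3), and your route to the first half of (5) goes through (4). So as written the proposal establishes none of (3), (4), or the first half of (5).

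There is no circularity. Item (3) uses only (1), (2), P2 and P5. It is the same move you found for (4): lift under the outer antecedent by (1), then collapse by P5. The only difference is that the outer antecedent is $a$ instead of $x$. Start from your P2 base case $a\wedge b=(a\wedge x)\wedge b\le(a\wedge x)\to b$:
\[
a\to b\overset{(2)}{=}a\to(a\wedge b)\overset{(1)}{\le}a\to\bigl((a\wedge x)\to b\bigr)\overset{\mathrm{P5}}{\le}(a\wedge x)\to b .
\]
The last step is the instance of P5 with outer antecedent $a$ and inner antecedent $a\wedge x$. This is exactly the paper's argument, and (4) then follows as you planned.

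For (5), going through (4) with outer antecedent $a$ and inner pair $(b,0)$ is fine once (4) is available. However, the side condition is $b\wedge 0\le a$, not $a\wedge 0\le b$; both hold trivially, so nothing breaks. The paper instead gets $a\to\neg b\le a\to\neg(a\wedge b)$ from antitonicity and (1), then applies P5 once. Unwinding your route, the instance of (3) it needs is precisely $\neg b\le\neg(a\wedge b)$, so the two arguments coincide.
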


\begin{proof}
We include the verification because items (1)--(4) are not isolated as
separate named results in \cite{Holliday2025}.
(1) If $b\le c$, then $a\wedge b=(a\wedge b)\wedge c$; P3 followed by P4
therefore gives $a\to b\le a\to c$.  Item (2) is P3 plus P4, since
$a\wedge b\le b$.

For (3), if $a\wedge b\le x$, then $a\wedge b=(a\wedge x)\wedge b\le
(a\wedge x)\to b$ by P2.  Using (2), (1), and then P5 gives
$a\to b\le a\to((a\wedge x)\to b)\le(a\wedge x)\to b$.  Item (4) follows
from (3) and (1):
$x\to(a\to b)\le x\to((x\wedge a)\to b)\le(x\wedge a)\to b$.

For (5), the inequality $\neg b\le\neg(a\wedge b)$ is the antitonicity of
preconditional negation from \cite[Prop.~1]{Holliday2025}. Then
$a\to\neg b\le a\to\neg(a\wedge b)=a\to((a\wedge b)\to0)\le(a\wedge b)\to0$
by (1) and P5.
\end{proof}

\subsection{\texorpdfstring{$\SysK$}{K}-, \texorpdfstring{$\SysT$}{T}-, and
\texorpdfstring{$\SysF$}{F}-algebras}

\begin{definition}\label{def:algebras}
A \emph{$\SysK$-algebra} is a pair $(L, \to)$ where $L$ is a bounded lattice
and $\to$ is a preconditional on $L$.

A \emph{$\SysT$-algebra} is a pair $(L, \to)$ where $L$ is a bounded lattice
and $\to$ is a preconditional on $L$ satisfying
\begin{enumerate}[label=(\roman*)]
\item \emph{conditional identity} \textup{(ID)}: $a \to a = 1$ for all
$a \in L$;
\item \emph{semicomplementation}: $a \wedge \neg a = 0$ for all $a \in L$.
\end{enumerate}

An \emph{$\SysF$-algebra} is a $\SysT$-algebra additionally satisfying
\begin{enumerate}[label=(\roman*),start=3]
\item \emph{double negation inflation}: $a \leq \neg\neg a$ for all $a \in
L$.
\end{enumerate}
\end{definition}

\begin{remark}
In the presence of the preconditional axioms, conditional identity
is equivalent to the condition that $a \leq b$ implies $a \to b = 1$: if $a
\leq b$, then $1 = a \to a = a \to (a \wedge b) = a \to b$ by
Lemma~\ref{lem:precond-arith}(2), since $a = a\wedge b$.
\end{remark}

\begin{remark}[$\SysF$-algebras and fundamental lattices]
\label{rem:F-fund}
Recall from \cite[Def.~2.3]{HM2026} that a \emph{fundamental lattice} is a
bounded lattice with a unary operation $\neg$ satisfying dual
self-adjointness ($a \leq \neg b$ implies $b \leq \neg a$) and
semicomplementation; equivalently, $\neg$ is antitone, doubly inflationary
($a \leq \neg\neg a$), and a semicomplementation---i.e., a \emph{weak
pseudocomplementation} in the sense of \cite{Holliday2023}. Since the
negation derived from a preconditional is automatically antitone, an
$\SysF$-algebra is exactly a bounded lattice $L$ with a preconditional $\to$
satisfying conditional identity such that $(L, \neg)$, with
$\neg a := a \to 0$, is a fundamental lattice. Thus $\SysF$-algebras are
fundamental lattices whose
weak pseudocomplementation is induced by a preconditional.
\end{remark}

\begin{example}[Heyting algebras]\label{ex:heyting}
By \cite[Prop.~3]{Holliday2025}, Heyting implications are exactly the
preconditionals satisfying modus ponens ($a \wedge (a \to b) \leq b$) and
weak monotonicity ($b \leq a \to b$). Every Heyting algebra is an
$\SysF$-algebra: conditional identity, semicomplementation, and
$a \leq \neg\neg a$ are standard. In particular, all pseudocomplemented
distributive lattices arise
as negation reducts of $\SysF$-algebras.
\end{example}

\begin{example}[Ortholattices with the Sasaki hook]\label{ex:sasaki}
Let $(O, \neg)$ be an ortholattice and $a \stackrel{s}{\to} b := \neg a \vee
(a \wedge b)$ the Sasaki hook. By \cite[Cor.~1]{Holliday2025},
$\stackrel{s}{\to}$ is a preconditional, and the negation it induces is the
orthocomplementation itself: $a \stackrel{s}{\to} 0 = \neg a$. Conditional
identity amounts to excluded middle, which holds in ortholattices, and
semicomplementation and double negation inflation are immediate. Hence
$(O,\stackrel{s}{\to})$ is an $\SysF$-algebra. Conversely, by
\cite[Prop.~4]{Holliday2025}, if $(L,\to)$ is a $\SysK$-algebra with $a
\wedge \neg a = 0$ and $\neg\neg a = a$ for all $a$, then $(L, \neg)$ is an
ortholattice and $\to$ is its Sasaki hook. Thus \emph{the $\SysF$-algebras
in which $\neg$ is involutive are exactly the ortholattices with the Sasaki
hook}.
\end{example}

\begin{remark}
In the taxonomy of \cite[Fig.~3]{Holliday2025}, $\SysT$-algebras lie between
``preconditionals with semicomplementation'' and ``proto-Heyting
implications'': conditional identity is derivable from weak monotonicity
\cite[Prop.~8.2]{Holliday2025} but not conversely (the Sasaki hook on a
non-Boolean ortholattice satisfies conditional identity but not weak
monotonicity).
Neither modus ponens nor normality is assumed; both fail in general in our
intended relational models (Section~\ref{sec:relational}), as they do for
the Sasaki hook.
\end{remark}

\subsection{Interpretations, soundness, and completeness}

\begin{definition}
Let $(L,\to)$ be a bounded lattice with a binary operation. A
\emph{valuation} is a map $\theta \colon \PL \to L$; it extends uniquely to
$\tilde\theta \colon \Fm \to L$ by $\tilde\theta(\bot) = 0$,
$\tilde\theta(\top)=1$, and the obvious recursive clauses for $\wedge$,
$\vee$, $\to$. Note that $\tilde\theta(\neg\alpha) =
\neg\tilde\theta(\alpha)$.

For a class $\mathsf{C}$ of such algebras, define $\Gamma
\vDash_{\mathsf{C}} \varphi$ iff for every $(L,\to) \in \mathsf{C}$ and
every valuation $\theta$, there are $\gamma_1, \dots, \gamma_n \in \Gamma$
(possibly $n = 0$) with $\tilde\theta(\gamma_1) \wedge \dots \wedge
\tilde\theta(\gamma_n) \leq \tilde\theta(\varphi)$, where the empty meet is
$1$.\footnote{This finitary definition matches the finitary consequence
relations of Section~\ref{sec:proof}; an equivalent formulation quantifies
over filters. The infinitary, local consequence relation is treated by the
relational semantics of Section~\ref{sec:relational}, with respect to which
we prove \emph{strong} completeness in Section~\ref{sec:canonical}.}
\end{definition}

\begin{theorem}[Algebraic soundness and completeness]\label{thm:algebraic}
Let $\SysS \in \{\SysK, \SysT, \SysF\}$ and let $\mathsf{C}_{\SysS}$ be the
class of $\SysS$-algebras. Then for all $\Gamma \subseteq \Fm$ and $\varphi
\in \Fm$:
\[
\Gamma \vdash_{\SysS} \varphi \quad\text{iff}\quad \Gamma
\vDash_{\mathsf{C}_{\SysS}} \varphi.
\]
\end{theorem}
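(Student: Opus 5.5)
Soundness goes by induction on the generation of $\vdash_{\SysS}$ and completeness by the Lindenbaum--Tarski construction.

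\textbf{Soundness.} Define the relation $R_{\mathsf{C}}$ by $\Gamma \mathrel{R_{\mathsf{C}}} \varphi$ iff $\Gamma \vDash_{\mathsf{C}_{\SysS}} \varphi$. Since $\vdash_{\SysS}$ is the least relation closed under the rules, it suffices to show that $R_{\mathsf{C}}$ is closed under every rule of $\SysS$.
\begin{itemize}
\item The basic rules are routine lattice facts. For \rn{Cut}, combine the finite witnesses of the two premises: if $\tilde\theta(\gamma_1)\wedge\dots\wedge\tilde\theta(\psi)\le\tilde\theta(\varphi)$ and $\tilde\theta(\delta_1)\wedge\dots\le\tilde\theta(\psi)$, substitute the second meet for $\tilde\theta(\psi)$.
\item \rn{$\top\to$E} is P1 and \rn{$\wedge{\vdash}\to$} is P2.
\item \rn{Con} follows from Lemma~\ref{lem:precond-arith}(2),(1): $a\to b = a\to(a\wedge b)\le a\to c$ whenever $a\wedge b\le c$.
\item \rn{Imp} is Lemma~\ref{lem:precond-arith}(4).
\item \rn{Rep} is immediate, because interderivable single formulas receive equal values under every valuation.
\item For $\SysT$, \rn{$\to$Id} is conditional identity and \rn{Exp} is semicomplementation. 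For $\SysF$, \rn{$\neg\neg$I} is double negation inflation.
\end{itemize}
One subtlety: rules with single-formula premises, such as \rn{Con} and \rn{$\vee$E}, only ever apply to single-premise instances. Their hypotheses are therefore pointwise inequalities: the witness is either the formula itself or the empty meet $1$, and the empty case only strengthens the inequality.

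\textbf{Completeness.} Build the Lindenbaum--Tarski algebra $L_{\SysS}=\Fm/{\equiv}$, where $\alpha\equiv\beta$ iff $\alpha\dashv\vdash_{\SysS}\beta$, ordered by $[\alpha]\le[\beta]$ iff $\alpha\vdash_{\SysS}\beta$.
\begin{itemize}
\item The basic rules make $L_{\SysS}$ a bounded lattice with $[\bot]$ and $[\top]$ as bounds. For distinct formulas, \rn{$\vee$E} gives the join property.
\item The operation $[\alpha]\to[\beta]:=[\alpha\to\beta]$ is well defined. \rn{Rep} handles the antecedent, and \rn{$\to$Mon} (Proposition~\ref{prop:derived-K}) applied in both directions handles the consequent.
\end{itemize}
The preconditional axioms are then checked one by one.
\begin{itemize}
\item P1 is \rn{$\top\to$E} and P2 is \rn{$\wedge{\vdash}\to$}.
\item P3 is \rn{CP}. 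If one prefers not to rely on it, \rn{Con} applied to $\alpha\wedge\beta\vdash\alpha\wedge\beta$ gives it directly.
\item P4 is \rn{$\to$Mon}.
\item P5 is \rn{Imp} with $\varphi:=\alpha$ and inner antecedent $\alpha\wedge\beta$. The side condition $(\alpha\wedge\beta)\wedge\gamma\vdash\alpha$ holds, and this yields $\alpha\to((\alpha\wedge\beta)\to\gamma)\vdash \alpha\wedge(\alpha\wedge\beta)\to\gamma$. Then \rn{Rep} with $\alpha\wedge(\alpha\wedge\beta)\dashv\vdash\alpha\wedge\beta$ gives P5.
\end{itemize}
For $\SysT$ and $\SysF$ the extra conditions follow similarly.
\begin{itemize}
\item Conditional identity: $\vdash\alpha\to\alpha$ gives $\top\vdash\alpha\to\alpha$ by \rn{Mon}, so $[\alpha\to\alpha]=1$.
\item Semicomplementation: \rn{Exp} together with \rn{$\wedge$E} and \rn{Cut} gives $\alpha\wedge\neg\alpha\vdash\bot$.
\item Double negation inflation is exactly \rn{$\neg\neg$I}.
\end{itemize}

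Finally, suppose $\Gamma\nvdash_{\SysS}\varphi$. Take the canonical valuation $\theta(p)=[p]$; by induction, $\tilde\theta(\psi)=[\psi]$ for every formula $\psi$. If there were $\gamma_1,\dots,\gamma_n\in\Gamma$ with $[\gamma_1]\wedge\dots\wedge[\gamma_n]\le[\varphi]$, then $\gamma_1\wedge\dots\wedge\gamma_n\vdash_{\SysS}\varphi$, reading $\top$ when $n=0$. By Remark~\ref{rem:mon-fin} this would give $\Gamma\vdash_{\SysS}\varphi$, a contradiction. Hence $\Gamma\nvDash_{\mathsf{C}_{\SysS}}\varphi$, since $L_{\SysS}$ is an $\SysS$-algebra.

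The only step needing care is verifying P5 (and P3) in the Lindenbaum algebra. The rule \rn{Imp} is stated with an entailment side condition and a differently shaped antecedent, so it must be matched to P5 through \rn{Rep}. Everything else is bookkeeping.
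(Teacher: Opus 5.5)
Your proposal is correct and follows the paper's proof essentially step for step. Soundness is obtained by checking that $\vDash_{\mathsf{C}_{\SysS}}$ is closed under each rule, via P1, P2, Lemma~\ref{lem:precond-arith}(1),(2),(4) and the extra $\SysT$/$\SysF$ conditions. Completeness uses the Lindenbaum--Tarski quotient, with P3 from \rn{CP} and P5 from \rn{Imp} plus \rn{Rep}, exactly as in the paper. The only minor difference is that you verify closure under \rn{Cut} directly by merging finite witnesses rather than appealing to finitarity (Remark~\ref{rem:mon-fin}), and that works just as well.
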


\begin{proof}
\emph{Soundness.} By Remark~\ref{rem:mon-fin} it suffices to check that the
relation $\Gamma \vdash^{*} \varphi$ defined by ``$\tilde\theta(\gamma_1)
\wedge \dots \wedge \tilde\theta(\gamma_n) \leq \tilde\theta(\varphi)$ for
some $\gamma_i \in \Gamma$, for every algebra in $\mathsf{C}_{\SysS}$ and
valuation $\theta$'' is closed under the rules of $\SysS$. The basic rules
correspond to the bounded lattice laws, with \rn{$\top$} interpreted as the
top element $1$. For the conditional rules of $\SysK$:
\rn{$\top\to$E} is P1 (as $\tilde\theta(\top) = 1$);
\rn{$\wedge{\vdash}\to$} is P2; \rn{Con} holds by
Lemma~\ref{lem:precond-arith}(2) and (1): if $a \wedge b \leq c$ then $a \to
b = a \to (a \wedge b) \leq a \to c$; \rn{Imp} is
Lemma~\ref{lem:precond-arith}(4); and \rn{Rep} holds since $\to$ is an
operation on $L$ (equal arguments give equal values). For $\SysT$,
\rn{$\to$Id} is conditional identity and \rn{Exp} is
semicomplementation. For $\SysF$:
\rn{$\neg\neg$I} is double negation inflation.

\emph{Completeness.} Suppose $\Gamma \nvdash_{\SysS} \varphi$. Define
$\alpha \approx \beta$ iff $\alpha \dashv\vdash_{\SysS} \beta$; by \rn{A}
and \rn{Cut} this is an equivalence relation, and by \rn{$\wedge$I},
\rn{$\wedge$E}, \rn{$\vee$I}, \rn{$\vee$E}, \rn{$\to$Mon}, and \rn{Rep} it
is a congruence for $\wedge$, $\vee$, $\to$. Let $L_{\SysS}$ be the quotient
$\Fm/{\approx}$ with $[\alpha] \leq [\beta]$ iff $\alpha \vdash_{\SysS}
\beta$; this is a lattice with the induced operations, bounded by $0 =
[\bot]$ (by \rn{$\bot$E}) and $1 = [\top]$ (by \rn{$\top$}, \rn{Mon}, and
\rn{Cut}).

We check that $(L_{\SysS}, \to)$ is an $\SysS$-algebra. P1 is
\rn{$\top\to$E}, since $1=[\top]$; P2 is
\rn{$\wedge{\vdash}\to$}; P3 is \rn{CP}; P4 follows from \rn{$\to$Mon} and
\rn{$\wedge$E}; P5 follows from \rn{Imp} instantiated as in the proof of
\rn{Imp$^-$}: taking the outer antecedent $\alpha$ and inner antecedent
$\alpha \wedge \beta$, the side condition $(\alpha \wedge \beta) \wedge
\gamma \vdash \alpha$ holds by \rn{$\wedge$E}, giving $\alpha \to ((\alpha
\wedge \beta) \to \gamma) \vdash \alpha \wedge (\alpha \wedge \beta) \to
\gamma \dashv\vdash (\alpha \wedge \beta) \to \gamma$ by \rn{Rep}. For
$\SysS = \SysT$: conditional identity is
\rn{$\to$Id}, and semicomplementation is $\alpha \wedge \neg\alpha \vdash
\bot$, which follows from \rn{$\wedge$E}, \rn{Exp}, and \rn{Cut}. For $\SysS
= \SysF$: double negation inflation is \rn{$\neg\neg$I}.

Let $\theta(p) = [p]$; an easy induction gives $\tilde\theta(\alpha) =
[\alpha]$. If we had $[\gamma_1] \wedge \dots \wedge [\gamma_n] \leq
[\varphi]$ for some $\gamma_i \in \Gamma$, then $\gamma_1 \wedge \dots
\wedge \gamma_n \vdash_{\SysS} \varphi$, hence $\Gamma \vdash_{\SysS}
\varphi$ by \rn{$\wedge$I}, \rn{Mon}, and \rn{Cut} (or $\top \vdash \varphi$
and hence $\vdash \varphi$ if $n = 0$), a contradiction. So $\Gamma
\nvDash_{\mathsf{C}_{\SysS}} \varphi$.
\end{proof}

\begin{remark}
Theorem~\ref{thm:algebraic} for $\SysS = \SysK$ shows that, for single
premises, $\vK$ axiomatizes exactly the consequences valid in all bounded
lattices with a preconditional. Thus the algebraic content is precisely
Holliday's preconditional semantics \cite{Holliday2025}; the role of
$\SysK$ is to provide its consequence-relation counterpart for the later
syntactic and modal arguments.
\end{remark}

\section{Relational semantics}\label{sec:relational}

We now recall the relational semantics of
\cite{Holliday2023,Holliday2025,HM2026} and adapt it to our language. We
use the notation of \cite{HM2026} for the induced operators and of
\cite{Holliday2025} for the conditional operation. Following the convention
of \cite{HM2026}, the relational notation in this section is
\emph{predecessor-style}: a frame is written $(X,\open)$, and $y\open x$
means that $y$ is open to $x$, so $\Box_{\open}$ quantifies over the
states written on the left of $x$.  Thus, if one instead writes the usual
forward accessibility relation as $xRy$, then $xRy$ corresponds here to
$y\open x$.  The modal target frames in Sections~\ref{sec:os4} and
\ref{sec:itb} use the same predecessor-style convention for their
preorders: $y\le x$ means that $y$ is a refinement/predecessor of $x$,
and $\Box_{\le}$ quantifies over such $y$.  This is the converse of the
forward accessibility notation $xRy$ often used in modal semantics; it is
also the convention used by the operator definitions below.

\subsection{Frames, propositions, and the conditional operation}

\begin{definition}[{\cite[Def.~2.7]{HM2026}}]\label{def:frame}
A \emph{relational frame} is a pair $(X, \open)$ where $X$ is a nonempty set
and $\open$ is a binary relation on $X$. When $y \open x$, we say that $y$
is \emph{open to} $x$. A state $x \in X$ is \emph{absurd} if there is no $y
\open x$.
\end{definition}

\begin{notation}\label{not:operators}
Given a relational frame $(X, \open)$ and $U \subseteq X$, define
\[
\Box U = \{x \in X \mid \forall y \open x\colon y \in U\},
\qquad
\blacklozenge U = \{x \in X \mid \exists y\colon x \open y \text{ and } y
\in U\}.
\]
Thus $\Box$ quantifies universally over the states open to $x$, while
$\blacklozenge$ quantifies existentially over the states to which $x$ is
open. Further define, for $U, V \subseteq X$:
\begin{align*}
c_{\open}(U) &= \Box\blacklozenge U
= \{x \mid \forall y \open x\, \exists z\colon y \open z \text{ and } z \in
U\},\\
\neg_{\open} U &= \Box (X \setminus U)
= \{x \mid \forall y \open x\colon y \notin U\},\\
U \to_{\open} V &= \Box\bigl((X \setminus U) \cup \blacklozenge(U \cap
V)\bigr)
= \{x \mid \forall y \open x\, (y \in U \Rightarrow \exists z\colon y \open
z \text{ and } z \in U \cap V)\}.
\end{align*}
\end{notation}

The map $c_{\open}$ is the closure operator of
\cite[Def.~2.8]{HM2026},\footnote{In the notation of \cite{HM2026},
$c_{\open}(U) = \neg_{\open}\neg_{\nepo}U$, where $\nepo$ is the converse of
$\open$; and $\to_{\open}$ is the conditional operation of
\cite[\S~3]{Holliday2025}.} and a \emph{proposition} in $(X,\open)$ is a
fixpoint of $c_{\open}$. By \cite[Prop.~2.10]{HM2026}, the set of
propositions ordered by inclusion forms a complete lattice
$\chi_{\open}(X)$, in which meets are intersections, joins are given by
$\bigvee_{i} U_i = c_{\open}(\bigcup_i U_i)$, the top element is $X$, and
the least element $0 = c_{\open}(\varnothing) = \Box\varnothing$ is the set
of absurd states.

\begin{fact}[{\cite[\S~3, Fact~4]{Holliday2025}}]\label{fact:precond-frame}
For any relational frame $(X, \open)$, if $U, V \in \chi_{\open}(X)$, then
$U \to_{\open} V \in \chi_{\open}(X)$, and the operation $\to_{\open}$ is a
preconditional on the complete lattice $\chi_{\open}(X)$.
\end{fact}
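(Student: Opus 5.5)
We split the statement into two parts: closure of $\chi_{\open}(X)$ under $\to_{\open}$, and verification of P1--P5. Throughout we use three elementary facts about the operators of Notation~\ref{not:operators}. First, $\Box$ and $\blacklozenge$ are monotone. Second, they form a Galois pair: $U \subseteq \Box\blacklozenge U$. Third, $\Box\blacklozenge\Box W = \Box W$ for every $W$. The third fact follows from the standard adjunction $\blacklozenge \Box W \subseteq W$, since $x \open y$ and $y\in\Box W$ give $x \in W$. It yields $\Box W \subseteq \Box\blacklozenge\Box W\subseteq \Box W$.

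\emph{Closure.} Since $U \to_{\open} V = \Box W$ with $W=(X\setminus U)\cup\blacklozenge(U\cap V)$, the identity $c_{\open}\Box W = \Box W$ shows that $U\to_{\open}V$ is a fixpoint of $c_{\open}$, for arbitrary $U,V\subseteq X$. In particular it is a proposition whenever $U,V$ are.

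\emph{Axioms.} Fix propositions $A,B,C$ and recall that meets are intersections and $1=X$.

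\emph{P1.} $X\to_{\open} A = \Box\blacklozenge A = c_{\open}(A) = A$.

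\emph{P2.} Let $x\in A\cap B$ and $y\open x$ with $y\in A$. We need some $z$ with $y\open z$ and $z\in A\cap B$. Since $x$ is a witness, $x\in A\to_{\open}B$.

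\emph{P3.} This is immediate, because $A\cap(A\cap B)=A\cap B$ makes the two sides literally equal.

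\emph{P4.} This follows from monotonicity of $\blacklozenge$ and $\Box$ in the consequent.

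\emph{P5.} This is the only step with content, and we expect it to be the main obstacle. Let $x\in A\to_{\open}((A\cap B)\to_{\open}C)$ and let $y\open x$ with $y\in A\cap B$. We want $z$ with $y\open z$ and $z\in A\cap B\cap C$.
\begin{enumerate}[label=(\roman*)]
\item Since $y\in A$, there is $w$ with $y\open w$, $w\in A$, and $w\in (A\cap B)\to_{\open} C$.
\item We cannot directly feed $y$ into $w$'s condition, because that would require $y\open w$ to be used in the right direction. Here it is: $w\in\Box(\dots)$ quantifies over states open to $w$, and $y\open w$ holds.
\item Applying $w$'s condition to $y$, which lies in $A\cap B$, gives $z$ with $y\open z$ and $z\in A\cap B\cap C$, as required.
\end{enumerate}
So the only subtle point is keeping the predecessor-style orientation straight. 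Under that convention the argument closes without any frame conditions, matching the claim that the Fact holds for arbitrary $(X,\open)$. If the orientation were reversed, the argument would need an extra step through $c_{\open}$, using that $A\cap B$ is a proposition. We would re-check the directions against \cite{Holliday2025} before finalizing.
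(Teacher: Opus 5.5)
Your proof is correct. The paper gives no proof of this Fact and simply cites \cite[Fact~4]{Holliday2025}; your argument is the standard direct verification: closure holds because $U\to_{\open}V=\Box W$ and $\Box\blacklozenge\Box W=\Box W$ (the Galois-connection point the paper records in Lemma~\ref{lem:toolbox}(g)), and P1--P5 follow by unwinding the definitions. The orientation worry in P5 is unfounded, so you can delete the hedge: under the paper's convention, $y\open w$ places $y$ in the scope of the $\Box$ in $w\in(A\cap B)\to_{\open}C$, and no appeal to $c_{\open}$ or to $A\cap B$ being a proposition is needed.
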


\begin{definition}[{\cite[Lem.~2.11, Def.~2.12]{HM2026}}]\label{def:frame-conds}
Let $(X, \open)$ be a relational frame and $x, y \in X$. We say $y$
\emph{prerefines} $x$, written $y \preref x$, if for all $z \in X$, $z \open
y$ implies $z \open x$; equivalently, every proposition containing $x$
contains $y$. The frame is:
\begin{itemize}
\item \emph{reflexive} if $x \open x$ for all $x$;
\item \emph{pseudo-symmetric} if for all $x$ and all $y \open x$, there is
$z \open y$ with $z \preref x$.
\end{itemize}
An \emph{$\SysF$-frame} is a reflexive, pseudo-symmetric frame. Every
$\SysF$-frame is a \emph{fundamental frame} in the sense of
\cite[Def.~2.12]{HM2026}, since reflexivity implies pseudo-reflexivity;
conversely, the canonical fundamental frames of
\cite[Thm.~2.15]{HM2026} are reflexive, so restricting to $\SysF$-frames
does not change the induced logic.
\end{definition}

\subsection{Models and semantic consequence}

\begin{definition}\label{def:model}
A \emph{model} is a triple $\mathcal{M} = (X, \open, V)$ where $(X, \open)$
is a relational frame and $V \colon \PL \to \chi_{\open}(X)$ is an
\emph{admissible valuation}, i.e., every $V(p)$ is a proposition. The
\emph{truth set} $\true{\varphi}_{\mathcal{M}} \subseteq X$ of a formula is
defined recursively:
\begin{align*}
\true{p} &= V(p), \qquad \true{\top}=X, \qquad
\true{\bot} = c_{\open}(\varnothing) = \Box\varnothing,\\
\true{\alpha \wedge \beta} &= \true{\alpha} \cap \true{\beta}, \qquad
\true{\alpha \vee \beta} = c_{\open}(\true{\alpha} \cup \true{\beta}),\\
\true{\alpha \to \beta} &= \true{\alpha} \to_{\open} \true{\beta}.
\end{align*}
We write $\mathcal{M}, x \vDash \varphi$ for $x \in
\true{\varphi}_{\mathcal{M}}$, and $\mathcal{M}, x \vDash \Gamma$ if
$\mathcal{M}, x \vDash \gamma$ for all $\gamma \in \Gamma$. For a class
$\mathsf{C}$ of models,
\[
\Gamma \vDash_{\mathsf{C}} \varphi \quad\text{iff}\quad \text{for all }
\mathcal{M} \in \mathsf{C} \text{ and } x \in \mathcal{M}\colon\ 
\mathcal{M}, x \vDash \Gamma \Rightarrow \mathcal{M}, x \vDash \varphi.
\]
Let $\mathsf{C}_{\SysT}$ be the class of models over reflexive frames, and
$\mathsf{C}_{\SysF}$ the class of models over $\SysF$-frames.
\end{definition}

By Fact~\ref{fact:precond-frame} and \cite[Prop.~2.10]{HM2026}, an easy
induction shows that $\true{\varphi} \in \chi_{\open}(X)$ for every formula
$\varphi$; thus each model determines an interpretation into the dual
algebra of its frame. On every frame, the derived negation coincides with
the primitive negation of \cite{Holliday2023,HM2026}. Indeed, since
$\true{\bot}=\Box\varnothing$ and
$\blacklozenge(U\cap\Box\varnothing)=\varnothing$ for every $U\subseteq X$,
we have
\[
\true{\neg\alpha} = \true{\alpha} \to_{\open} \Box\varnothing =
\Box(X\setminus\true{\alpha}) = \neg_{\open}\true{\alpha}.
\]
Thus the $\{\wedge,\vee,\neg\}$-clauses of our semantics agree with theirs.

\begin{proposition}[Dual algebras]\label{prop:dual-alg}
Let $(X, \open)$ be a relational frame.
\begin{enumerate}[label=(\arabic*)]
\item $(\chi_{\open}(X), \to_{\open})$ is a bounded lattice with a
preconditional.
\item If $\open$ is reflexive, then $(\chi_{\open}(X), \to_{\open})$ is a
$\SysT$-algebra.
\item If $(X, \open)$ is an $\SysF$-frame, then $(\chi_{\open}(X),
\to_{\open})$ is an $\SysF$-algebra.
\end{enumerate}
\end{proposition}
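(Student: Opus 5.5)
Part (1) is exactly Fact~\ref{fact:precond-frame} combined with \cite[Prop.~2.10]{HM2026}: the propositions form a complete, hence bounded, lattice $\chi_{\open}(X)$ with meets given by intersection, top $X$ and bottom $\Box\varnothing$, and $\to_{\open}$ maps propositions to propositions and satisfies P1--P5. For (2) and (3) it then suffices to check the additional conditions of Definition~\ref{def:algebras} directly from the clauses in Notation~\ref{not:operators}, using the identity $U\to_{\open}\Box\varnothing=\neg_{\open}U$ established just before the proposition.

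For (2), assume $\open$ is reflexive. \emph{Conditional identity:} let $U$ be a proposition and $x\in X$. Take any $y\open x$ with $y\in U$. Reflexivity gives $y\open y$, and $y\in U\cap U$, so $z:=y$ is the required witness. Hence $U\to_{\open}U=X$, which is the top. \emph{Semicomplementation:} suppose $x\in U\cap\neg_{\open}U$. By reflexivity $x\open x$, so $x\in\neg_{\open}U$ forces $x\notin U$, a contradiction. Thus $U\cap\neg_{\open}U=\varnothing$. Since $\varnothing\subseteq\Box\varnothing$ and meets are intersections, this gives $U\wedge\neg U=0$ in $\chi_{\open}(X)$.

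For (3), we keep (2) and add double negation inflation $U\subseteq\neg_{\open}\neg_{\open}U$ for propositions $U$. Let $x\in U$ and $y\open x$; we must show $y\notin\neg_{\open}U$, i.e.\ find some $z\open y$ with $z\in U$. Pseudo-symmetry gives $z\open y$ with $z\preref x$. By Definition~\ref{def:frame-conds}, every proposition containing $x$ contains $z$, and $U$ is a proposition, so $z\in U$.

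This last step is the only one that uses anything beyond unwinding definitions. It relies on the stated equivalence between the relational definition of $\preref$ and its characterisation via propositions; if that equivalence were in doubt, I would verify it directly from $U=\Box\blacklozenge U$ at this point. I do not expect a real obstacle anywhere; the proof should be a short verification.
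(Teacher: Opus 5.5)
Your proposal is correct and follows the paper's proof: part (1) uses the same citations, and part (2) uses the same reflexivity argument, with the witnesses ($z:=y$, and the edge $x\open x$) written out. The only difference is in (3): the paper cites \cite[Lem.~2.13(2)]{HM2026} for the equivalence of pseudo-symmetry with $U\subseteq\neg_{\open}\neg_{\open}U$, whereas you prove directly the one direction needed, and your fallback check is sound, since if $z\preref x$ and $x\in U=\Box\blacklozenge U$, then every $w\open z$ satisfies $w\open x$, so $x$ itself witnesses $w\in\blacklozenge U$.
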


\begin{proof}
Part (1) is precisely \cite[Fact~4]{Holliday2025}, together with the lattice
structure of propositions from \cite[Prop.~2.10]{HM2026}.  If $\open$ is
reflexive, then $0=\Box\varnothing=\varnothing$; reflexivity also gives
$U\to_{\open}U=X$ and $U\cap\neg_{\open}U=\varnothing$ for every proposition
$U$, so the dual algebra is a $\SysT$-algebra.  Finally, by
\cite[Lem.~2.13(2)]{HM2026}, pseudo-symmetry of $\open$ is equivalent to
$U\subseteq\neg_{\open}\neg_{\open}U$ for every proposition $U$.  Since the
negation derived from $\to_{\open}$ is $\neg_{\open}$ on every frame, this is
exactly double negation inflation.
\end{proof}

\begin{theorem}[Soundness]\label{thm:soundness}
For all $\Gamma \subseteq \Fm$ and $\varphi \in \Fm$:
\begin{enumerate}[label=(\arabic*)]
\item if $\Gamma \vT \varphi$, then $\Gamma \vDash_{\mathsf{C}_{\SysT}}
\varphi$;
\item if $\Gamma \vF \varphi$, then $\Gamma \vDash_{\mathsf{C}_{\SysF}}
\varphi$.
\end{enumerate}
\end{theorem}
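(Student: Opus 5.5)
The plan is to reduce relational soundness to algebraic soundness, Theorem~\ref{thm:algebraic}, through the dual algebras of Proposition~\ref{prop:dual-alg}. Take a model $\mathcal{M}=(X,\open,V)$ over a reflexive frame (resp.\ an $\SysF$-frame). Since $V$ is admissible, $V$ is a valuation $\theta$ into the dual algebra $(\chi_{\open}(X),\to_{\open})$.

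The first step is an induction on formulas showing $\tilde\theta(\varphi)=\true{\varphi}_{\mathcal{M}}$. The clauses match the algebra directly:
\begin{itemize}
\item meets are intersections;
\item joins are $c_{\open}$ of unions;
\item the conditional is $\to_{\open}$ on both sides;
\item $\true{\top}=X$ is the top element;
\item $\true{\bot}=\Box\varnothing$ is the least proposition.
\end{itemize}

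By Proposition~\ref{prop:dual-alg}(2) (resp.\ (3)), this dual algebra is a $\SysT$-algebra (resp.\ $\SysF$-algebra). Now suppose $\Gamma\vT\varphi$ (resp.\ $\Gamma\vF\varphi$). Algebraic soundness (the soundness half of Theorem~\ref{thm:algebraic}) yields $\gamma_1,\dots,\gamma_n\in\Gamma$ with $\true{\gamma_1}\cap\dots\cap\true{\gamma_n}\subseteq\true{\varphi}$. For $n=0$ this reads $X\subseteq\true{\varphi}$.

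Finally, take any $x$ with $\mathcal{M},x\vDash\Gamma$. Then $x$ lies in each $\true{\gamma_i}$, hence in their intersection, hence in $\true{\varphi}$. So $\Gamma\vDash_{\mathsf{C}_{\SysT}}\varphi$ (resp.\ $\vDash_{\mathsf{C}_{\SysF}}$).

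The only point needing care is whether algebraic soundness really applies to arbitrary, possibly infinite, $\Gamma$. It does: the paper's algebraic consequence relation is finitary, and by Remark~\ref{rem:mon-fin} derivability from $\Gamma$ always factors through a finite subset, so the pointwise local consequence over models follows.

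As a fallback, one could instead check closure of the local relation $\vDash_{\mathsf{C}}$ under each rule directly. The rules \rn{A}, \rn{Cut}, and the lattice rules are immediate. The conditional rules hold because they are pointwise consequences of the inequalities P1--P5 and of Lemma~\ref{lem:precond-arith} in the dual algebra. The premise-discharging rules \rn{Con}, \rn{Imp}, \rn{Rep}, and \rn{$\vee$E} have only single-premise hypotheses, so for these the local and global readings coincide. The reduction above avoids this case check, and I expect no real obstacle.
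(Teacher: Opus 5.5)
Your proposal is correct and follows the paper's proof. Like the paper, you read $V$ as a valuation into the dual algebra $(\chi_{\open}(X),\to_{\open})$, which is a $\SysT$- (resp.\ $\SysF$-)algebra by Proposition~\ref{prop:dual-alg}, apply the soundness half of Theorem~\ref{thm:algebraic} to get finitely many $\gamma_i\in\Gamma$ with $\true{\gamma_1}\cap\dots\cap\true{\gamma_n}\subseteq\true{\varphi}$, and conclude pointwise (the paper merely reduces to a finite conjunction via Remark~\ref{rem:mon-fin} first, and your fallback rule-by-rule check is not needed).
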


\begin{proof}
Suppose $\Gamma \vT \varphi$ and let $\mathcal{M} = (X, \open, V) \in
\mathsf{C}_{\SysT}$ and $x \vDash \Gamma$. By Remark~\ref{rem:mon-fin},
$\gamma_1 \wedge \dots \wedge \gamma_n \vT \varphi$ for some $\gamma_i \in
\Gamma$ (reading the empty conjunction as $\top$). The valuation $V$
extends to the interpretation $\true{\cdot}$ into the $\SysT$-algebra
$(\chi_{\open}(X), \to_{\open})$ of Proposition~\ref{prop:dual-alg}
(note $\true{\bot} = \varnothing = 0$), so by the soundness half of
Theorem~\ref{thm:algebraic}, $\true{\gamma_1} \cap \dots \cap
\true{\gamma_n} \subseteq \true{\varphi}$. Since $x \in \true{\gamma_i}$
for each $i$ (and $\true{\top}=X$ if $n = 0$), we get $x
\in \true{\varphi}$. The argument for $\SysF$ is the same, using
Proposition~\ref{prop:dual-alg}(3).
\end{proof}

\begin{corollary}[Conservativity over fundamental
logic]\label{cor:conservative}
For all $\varphi, \psi$ in the $\{\wedge,\vee,\neg\}$-fragment
\textup{(}with $\neg\alpha := \alpha \to \bot$\textup{)}: $\varphi \vF
\psi$ iff $\varphi \vdash_{\mathcal{F}} \psi$ in fundamental logic.
\end{corollary}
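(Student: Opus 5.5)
The right-to-left direction is Proposition~\ref{prop:F-extends-FL}: if $\varphi\vdash_{\mathcal{F}}\psi$, then $\varphi\vF\psi$. So the work is in the left-to-right direction, which I will prove semantically by contraposition. Assume $\varphi\nvdash_{\mathcal{F}}\psi$ for $\varphi,\psi$ in the $\{\wedge,\vee,\neg\}$-fragment. By the relational completeness of fundamental logic \cite[Thm.~2.15]{HM2026}, there is a model $(X,\open,V)$ on a canonical fundamental frame and a state $x$ such that $x$ satisfies $\varphi$ under the $\{\wedge,\vee,\neg\}$-semantics of \cite{HM2026} but does not satisfy $\psi$. As remarked in Definition~\ref{def:frame-conds}, these canonical fundamental frames are reflexive. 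They are also pseudo-symmetric, since fundamental frames are pseudo-symmetric by definition. Hence $(X,\open)$ is an $\SysF$-frame, and $(X,\open,V)\in\mathsf{C}_{\SysF}$.

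Next I compare the two semantics on this model. Our truth-set clauses for $p$, $\wedge$ and $\vee$ are literally those of \cite{HM2026}. For negation, the computation after Definition~\ref{def:model} gives $\true{\alpha\to\bot}=\neg_{\open}\true{\alpha}$ on every frame. A routine induction on fragment formulas therefore shows that $\true{\chi}$ coincides with the \cite{HM2026} truth set for every $\chi$ in the $\{\wedge,\vee,\neg\}$-fragment. It follows that $x\vDash\varphi$ and $x\nvDash\psi$ in our semantics, so $\varphi\nvDash_{\mathsf{C}_{\SysF}}\psi$. By Theorem~\ref{thm:soundness}(2), $\varphi\nvF\psi$.

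The main obstacle is the claim about the frame class. If one prefers not to rely on the remark that canonical fundamental frames are reflexive, there is an alternative route that avoids the issue. Use algebraic completeness of fundamental logic with respect to fundamental lattices, together with representation of fundamental lattices as lattices of propositions $\chi_{\open}(X)$ over reflexive pseudo-symmetric frames, for instance via filter–ideal pairs $(F,I)$ with $F\cap I=\varnothing$ and $\open$ defined by a condition that is reflexive when $F\cap I=\varnothing$. Either way, the key point is the same: once a countermodel lives on an $\SysF$-frame, the conditional is automatically interpreted by $\to_{\open}$, so Theorem~\ref{thm:soundness} applies without change.
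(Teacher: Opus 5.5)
Your proposal is correct and follows the paper's own proof essentially step for step. Right to left is Proposition~\ref{prop:F-extends-FL}. Left to right combines a countermodel over the reflexive canonical fundamental frame of \cite[Thm.~2.15]{HM2026}, the coincidence $\true{\alpha\to\bot}=\neg_{\open}\true{\alpha}$ of the fragment semantics, and the contrapositive of Theorem~\ref{thm:soundness}(2). The alternative route through algebraic representation in your last paragraph is not needed.
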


\begin{proof}
Right to left is Proposition~\ref{prop:F-extends-FL}. Left to right:
suppose $\varphi \nvdash_{\mathcal{F}} \psi$. By the completeness of
fundamental logic with respect to reflexive, pseudo-symmetric frames with
the semantics of \cite{Holliday2023} (see \cite[Thm.~2.15]{HM2026}, whose
canonical frame is reflexive), there is a model over an $\SysF$-frame and a
state satisfying $\varphi$ but not $\psi$ under the
$\{\wedge,\vee,\neg\}$-semantics. As observed after
Definition~\ref{def:model}, our semantics for the fragment coincides with
that of \cite{Holliday2023,HM2026} (with
$\true{\neg\alpha} = \neg_{\open}\true{\alpha}$). Hence $\varphi
\nvDash_{\mathsf{C}_{\SysF}} \psi$, so $\varphi \nvF \psi$ by
Theorem~\ref{thm:soundness}.
\end{proof}

\begin{remark}\label{rem:failures}
Modus ponens, weak monotonicity, and normality all fail for $\to_{\open}$
on $\SysF$-frames in general; see \cite[Ex.~2]{Holliday2025} for
countermodels. This is as it should be: modus ponens fails for the Sasaki
hook on non-orthomodular ortholattices \cite{Mittelstaedt1972}, and weak
monotonicity fails for the
Sasaki hook on non-Boolean orthomodular lattices, both of which are
$\SysF$-algebras by Example~\ref{ex:sasaki} and hence, by the completeness
theorem below, must be respected by the semantics.
\end{remark}

\section{Canonical model and strong completeness}\label{sec:canonical}

In this section we prove the converses of Theorem~\ref{thm:soundness} by a
canonical model construction. The construction is a syntactic counterpart
of the filter-ideal frames used in
\cite[Thm.~4.30]{Holliday2023}, \cite[Thm.~2]{Holliday2025}, and
\cite[Thm.~2.15]{HM2026}: states are pairs $(\Gamma, \Delta)$ of a
\emph{theory} $\Gamma$ (corresponding to a filter) and a
\emph{counter-theory} $\Delta$ (corresponding to an ideal), and the
openness relation compares the theory of one state with the counter-theory
of another. The essential new ingredient, replacing the negation-driven
condition ``$a \in F$ implies $\neg a \in I$'' of \cite{HM2026}, is a
closure condition on $\Delta$ driven by the conditional
(Definition~\ref{def:FGamma}), which is the syntactic counterpart of the
compatibility condition on filter-ideal pairs in Holliday's representation
theorem for lattices with preconditionals \cite[Thm.~2]{Holliday2025}.

Throughout this section, $\vdash$ denotes either $\vT$ or $\vF$; each lemma
lists the rules it uses, so that it is clear which conclusions require
\rn{$\neg\neg$I}.

\subsection{Theories and counter-theories}

\begin{definition}\label{def:theories}
For $\Gamma \subseteq \Fm$, let $\Th(\Gamma) = \{\alpha \in \Fm \mid \Gamma
\vdash \alpha\}$, and write $\Th(\alpha)$ for $\Th(\{\alpha\})$. We say
$\Gamma$ is \emph{consistent} if $\Gamma \nvdash \bot$, and \emph{closed}
if $\Th(\Gamma) \subseteq \Gamma$. For $\Xi \subseteq \Fm$, let
\[
{\downarrow}\Xi = \{\beta \in \Fm \mid \beta \vdash \psi \text{ for some }
\psi \in \Xi\}, \qquad \neg[\Gamma] = \{\neg\gamma \mid \gamma \in
\Gamma\}.
\]
\end{definition}

\begin{definition}\label{def:FGamma}
For $\Gamma, \Delta \subseteq \Fm$, say that $\Delta$ is
\emph{$\to$-closed relative to $\Gamma$} if for all $\alpha \in \Gamma$ and
$\beta \in \Fm$: if $\alpha \wedge \beta \in \Delta$, then $\alpha \to
\beta \in \Delta$.
\end{definition}

\begin{definition}[Canonical states]\label{def:canonical-states}
Define:
\begin{align*}
W_d &= \bigl\{(\Gamma, \Delta) \;\big|\; \Gamma \text{ is consistent and
closed; } \bot \in \Delta; \Delta \text{ is closed under
$\vdash$-predecessors}\\
&\qquad\quad \text{(i.e., ${\downarrow}\Delta \subseteq \Delta$) and under
$\vee$; } \Delta \text{ is $\to$-closed relative to } \Gamma; \text{ and }
\Gamma \cap \Delta = \varnothing \bigr\};\\
W_a &= \bigl\{(\Gamma, {\downarrow}\neg[\Gamma]) \;\big|\; \Gamma \text{ is
consistent and closed} \bigr\};\\
W_b &= \bigl\{(\Th(\alpha), {\downarrow}\{\alpha \to \beta\}) \;\big|\;
\alpha, \beta \in \Fm,\ \alpha \nvdash \alpha \to \beta \bigr\}.
\end{align*}
For $x = (\Gamma, \Delta)$ we write $\Gamma_x = \Gamma$ and $\Delta_x =
\Delta$. Define the relation $\open$ on pairs by
\[
y \open x \quad\text{iff}\quad \Gamma_x \cap \Delta_y = \varnothing,
\]
matching the canonical openness relation of \cite[Thm.~2.15]{HM2026} and
\cite[Thm.~4.30]{Holliday2023} (with $\Gamma$ in place of the filter and
$\Delta$ in place of the ideal). For $W \subseteq W_d$, the \emph{canonical
frame over $W$} is $(W, \open)$ with $\open$ restricted to $W$, and we
define $|\cdot|_W \colon \Fm \to \pow(W)$ by
\[
|\varphi|_W = \{x \in W \mid \varphi \in \Gamma_x\}.
\]
\end{definition}

Points of $W_b$ will serve as generic witnesses: the state
$(\Th(\alpha), {\downarrow}\{\alpha \to \beta\})$ is the ``freest'' state
that accepts $\alpha$ while rejecting $\alpha \to \beta$ (and everything
that entails it).

\begin{lemma}[Least counter-theories]\label{lem:least-ct}
\textup{(Uses the basic rules, \rn{$\neg$Ant}, \rn{$\wedge{\vdash}\neg$},
\rn{Exp}.)} Let $\Gamma$ be consistent and closed. Then:
\begin{enumerate}[label=(\arabic*)]
\item $(\Gamma, {\downarrow}\neg[\Gamma]) \in W_d$; hence $W_a \subseteq
W_d$.
\item ${\downarrow}\neg[\Gamma] \subseteq \Delta$ for every $\Delta
\subseteq \Fm$ that contains $\bot$, is closed under
$\vdash$-predecessors, and is $\to$-closed relative to $\Gamma$. In
particular, $\Delta_x \supseteq {\downarrow}\neg[\Gamma_x]$ for every $x
\in W_d$.
\end{enumerate}
\end{lemma}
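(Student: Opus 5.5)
For part (1), I will check each clause of the definition of $W_d$ for $\Delta := {\downarrow}\neg[\Gamma]$; the hypothesis already says that $\Gamma$ is consistent and closed.

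\emph{$\bot\in\Delta$.} Since $\Gamma$ is closed, $\top\in\Gamma$ by \rn{$\top$} and \rn{Mon}. By \rn{$\bot\equiv\neg\top$} we have $\bot\vdash\neg\top$, so $\bot\in\Delta$.

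\emph{Closure under $\vdash$-predecessors.} This follows from \rn{Cut}.

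\emph{Closure under $\vee$.} Let $\beta_1\vdash\neg\gamma_1$ and $\beta_2\vdash\neg\gamma_2$ with $\gamma_i\in\Gamma$. Put $\gamma=\gamma_1\wedge\gamma_2$, which lies in $\Gamma$ by \rn{$\wedge$I} and closure. Since $\gamma\vdash\gamma_i$, \rn{$\neg$Ant} gives $\neg\gamma_i\vdash\neg\gamma$. Hence $\beta_1,\beta_2\vdash\neg\gamma$, and \rn{$\vee$E} yields $\beta_1\vee\beta_2\vdash\neg\gamma$.

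\emph{$\to$-closure relative to $\Gamma$.} Suppose $\alpha\in\Gamma$ and $\alpha\wedge\beta\vdash\neg\gamma$ with $\gamma\in\Gamma$. By \rn{$\wedge{\vdash}\neg$}, $\alpha\to\beta\vdash\neg(\alpha\wedge\gamma)$. Since $\alpha\wedge\gamma\in\Gamma$, this puts $\alpha\to\beta$ in $\Delta$.

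\emph{Disjointness.} Suppose $\delta\in\Gamma\cap\Delta$, say $\delta\vdash\neg\gamma$ with $\gamma\in\Gamma$. Then $\neg\gamma\in\Gamma$ by closure. \rn{Exp} gives $\{\gamma,\neg\gamma\}\vdash\bot$, so $\bot\in\Gamma$, contradicting consistency.

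The step needing the most care is $\to$-closure. Its definition quantifies over all $\beta$, and \rn{$\wedge{\vdash}\neg$} is exactly what turns a negation bound on $\alpha\wedge\beta$ into one on $\alpha\to\beta$. The only subtlety is to combine the witness $\gamma$ with $\alpha$ inside $\Gamma$.

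For part (2), let $\Delta$ contain $\bot$, be closed under $\vdash$-predecessors, and be $\to$-closed relative to $\Gamma$. Take $\gamma\in\Gamma$. Then $\gamma\wedge\bot\vdash\bot$ by \rn{$\wedge$E}, so $\gamma\wedge\bot\in\Delta$ by downward closure. Now $\to$-closure with $\alpha:=\gamma\in\Gamma$ and $\beta:=\bot$ gives $\gamma\to\bot=\neg\gamma\in\Delta$. So $\neg[\Gamma]\subseteq\Delta$, and downward closure gives ${\downarrow}\neg[\Gamma]\subseteq\Delta$.

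For $x\in W_d$, the pair satisfies exactly these hypotheses with $\Gamma=\Gamma_x$, so $\Delta_x\supseteq{\downarrow}\neg[\Gamma_x]$. This also shows that ${\downarrow}\neg[\Gamma]$ is the least counter-theory compatible with $\Gamma$.
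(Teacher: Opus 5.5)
Your proof is correct and matches the paper's argument step for step: $\bot\in{\downarrow}\neg[\Gamma]$ via $\top\in\Gamma$, $\vee$-closure via \rn{$\neg$Ant} and \rn{$\vee$E} with the witness $\gamma_1\wedge\gamma_2$, $\to$-closure via \rn{$\wedge{\vdash}\neg$} with the witness $\alpha\wedge\gamma$, disjointness via \rn{Exp}, and part (2) via $\gamma\wedge\bot\in\Delta$ followed by $\to$-closure. The only cosmetic difference is that you cite \rn{$\bot\equiv\neg\top$} for $\bot\vdash\neg\top$, where \rn{$\bot$E} already gives it directly.
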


\begin{proof}
(1) Since $\Gamma$ is closed, $\top \in \Gamma$ by \rn{$\top$}, so $\bot \in
{\downarrow}\neg[\Gamma]$ as $\bot \vdash \neg\top$ by \rn{$\bot$E}.
Closure under $\vdash$-predecessors holds by \rn{Cut}. For closure under
$\vee$: if $\beta_i \vdash \neg\gamma_i$ with $\gamma_i \in \Gamma$
($i=1,2$), then $\gamma_1 \wedge \gamma_2 \in \Gamma$ (closure,
\rn{$\wedge$I}), and $\beta_i \vdash \neg\gamma_i \vdash \neg(\gamma_1
\wedge \gamma_2)$ by \rn{$\neg$Ant}, so $\beta_1 \vee \beta_2 \vdash
\neg(\gamma_1 \wedge \gamma_2)$ by \rn{$\vee$E}. For $\to$-closure relative
to $\Gamma$: suppose $\alpha \in \Gamma$ and $\alpha \wedge \beta \vdash
\neg\gamma$ with $\gamma \in \Gamma$. By \rn{$\wedge{\vdash}\neg$}, $\alpha
\to \beta \vdash \neg(\alpha \wedge \gamma)$, and $\alpha \wedge \gamma \in
\Gamma$, so $\alpha \to \beta \in {\downarrow}\neg[\Gamma]$. Finally, if
$\chi \in \Gamma \cap {\downarrow}\neg[\Gamma]$, say $\chi \vdash
\neg\gamma$ with $\gamma \in \Gamma$, then $\Gamma \vdash \gamma$ and
$\Gamma \vdash \neg\gamma$, so $\Gamma \vdash \bot$ by \rn{Exp} and
\rn{Cut}, contradicting consistency.

(2) Let $\Delta$ be as described and $\gamma \in \Gamma$. Then $\gamma
\wedge \bot \vdash \bot \in \Delta$, so $\gamma \wedge \bot \in \Delta$ by
predecessor closure, whence $\neg\gamma = \gamma \to \bot \in \Delta$ by
$\to$-closure relative to $\Gamma$. Thus $\neg[\Gamma] \subseteq \Delta$,
and predecessor closure gives ${\downarrow}\neg[\Gamma] \subseteq \Delta$.
\end{proof}

\begin{lemma}\label{lem:Wb-in-Wd}
\textup{(Uses the basic rules, \rn{CAS$'$}.)} $W_b \subseteq W_d$.
\end{lemma}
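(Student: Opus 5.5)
We fix $\alpha,\beta$ with $\alpha \nvdash \alpha \to \beta$ and set $\Gamma = \Th(\alpha)$ and $\Delta = {\downarrow}\{\alpha\to\beta\}$. We then check the defining clauses of $W_d$ one at a time, in increasing order of difficulty.

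\emph{Routine clauses.} Closure of $\Gamma$ is \rn{Cut}, since $\Th(\Gamma)=\Gamma$. For consistency: if $\alpha\vdash\bot$, then \rn{$\bot$E} and \rn{Cut} give $\alpha\vdash\alpha\to\beta$, contrary to hypothesis. Next, $\bot\in\Delta$ by \rn{$\bot$E}. Predecessor closure ${\downarrow}\Delta\subseteq\Delta$ is \rn{Cut}. Closure under $\vee$ is \rn{$\vee$E}: if $\beta_1,\beta_2\vdash\alpha\to\beta$, then $\beta_1\vee\beta_2\vdash\alpha\to\beta$. For disjointness: if $\chi\in\Gamma\cap\Delta$, then $\alpha\vdash\chi\vdash\alpha\to\beta$, which again contradicts the hypothesis.

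\emph{The main step: $\to$-closure of $\Delta$ relative to $\Gamma$.} Suppose $\varphi\in\Gamma$ and $\varphi\wedge\psi\in\Delta$, that is, $\alpha\vdash\varphi$ and $\varphi\wedge\psi\vdash\alpha\to\beta$. We must show $\varphi\to\psi\vdash\alpha\to\beta$. This is an instance of \rn{CAS$'$}. Take the outer conditional to be $\varphi\to\psi$ and the target conditional to be $\alpha\to\beta$. The two side conditions of \rn{CAS$'$} then read $\varphi\wedge\psi\vdash\alpha\to\beta$ and $\alpha\vdash\varphi$, and these are exactly our two assumptions. Hence $\varphi\to\psi\vdash\alpha\to\beta$, so $\varphi\to\psi\in\Delta$.

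The only delicate point is this instantiation of \rn{CAS$'$}: one must make sure the variable roles line up, with \rn{CAS$'$}'s $\alpha,\beta,\varphi,\psi$ replaced by our $\varphi,\psi,\alpha,\beta$. Everything else uses only the basic rules, which matches the rule list in the statement.
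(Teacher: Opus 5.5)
Your proof is correct and matches the paper's argument essentially line by line. The routine clauses use \rn{Cut}, \rn{$\bot$E}, and \rn{$\vee$E}, consistency and disjointness come from $\alpha\nvdash\alpha\to\beta$, and $\to$-closure is the same instantiation of \rn{CAS$'$}, with its $\alpha,\beta,\varphi,\psi$ replaced by $\varphi,\psi,\alpha,\beta$.
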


\begin{proof}
Let $x = (\Th(\alpha), {\downarrow}\{\alpha \to \beta\})$ with $\alpha
\nvdash \alpha \to \beta$. $\Th(\alpha)$ is closed by \rn{Cut} and
consistent, since $\alpha \vdash \bot$ would give $\alpha \vdash \alpha \to
\beta$ by \rn{$\bot$E} and \rn{Cut}. The set ${\downarrow}\{\alpha \to
\beta\}$ contains $\bot$ (by \rn{$\bot$E}), is closed under
$\vdash$-predecessors (by \rn{Cut}) and under $\vee$ (by \rn{$\vee$E}). For
$\to$-closure relative to $\Th(\alpha)$: suppose $\varphi \in \Th(\alpha)$
and $\varphi \wedge \psi \vdash \alpha \to \beta$. Since $\alpha \vdash
\varphi$, \rn{CAS$'$} yields $\varphi \to \psi \vdash \alpha \to \beta$,
i.e., $\varphi \to \psi \in {\downarrow}\{\alpha \to \beta\}$. Finally, if
$\chi \in \Th(\alpha) \cap {\downarrow}\{\alpha \to \beta\}$, then $\alpha
\vdash \chi \vdash \alpha \to \beta$, contradicting $\alpha \nvdash \alpha
\to \beta$.
\end{proof}

\subsection{Existence lemmas}

\begin{lemma}[Conditional Existence]\label{lem:cond-exist}
\textup{(Uses the basic rules, \rn{$\wedge{\vdash}\to$}, \rn{Imp},
\rn{Rep}, \rn{DT$_0$}, \rn{Con$\bot$}.)}
\begin{enumerate}[label=(\arabic*)]
\item Let $\Gamma$ be closed with $\alpha \to \beta \notin \Gamma$. Then
$y := (\Th(\alpha), {\downarrow}\{\alpha \to \beta\}) \in W_b$, and it
satisfies $\Gamma \cap \Delta_y = \varnothing$, $\alpha \in \Gamma_y$, and
$\alpha \wedge \beta \in \Delta_y$.
\item Let $x \in W_d$ with $\alpha \to \beta \in \Gamma_x$, and let $y \in
W_d$ with $y \open x$ and $\alpha \in \Gamma_y$. Then $z :=
(\Th(\alpha\wedge\beta), {\downarrow}\{\neg(\alpha \wedge \beta)\}) \in
W_b$, and it satisfies $y \open z$ \textup{(i.e., $\Gamma_z \cap \Delta_y =
\varnothing$)} and $\alpha, \beta \in \Gamma_z$.
\end{enumerate}
\end{lemma}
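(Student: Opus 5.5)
The plan is to verify each clause directly from the definitions of $W_b$, $W_d$ and $\open$. The only real work is showing that the generic witness is legitimate, i.e.\ that the non-derivability side condition in the definition of $W_b$ holds.

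\emph{Part (1).} To get $y \in W_b$ I must show $\alpha \nvdash \alpha \to \beta$, and I argue by contraposition.
\begin{itemize}
\item Suppose $\alpha \vdash \alpha\to\beta$. Then \rn{DT$_0$} gives $\vdash \alpha \to (\alpha \to \beta)$.
\item Instantiate \rn{Imp} with outer antecedent $\alpha$ and inner antecedent $\alpha$; the side condition $\alpha\wedge\beta \vdash \alpha$ holds by \rn{$\wedge$E}. This yields $\alpha\to(\alpha\to\beta) \vdash (\alpha\wedge\alpha)\to\beta$.
\item By \rn{Rep}, using $\alpha\wedge\alpha \dashv\vdash \alpha$, the right-hand side is interderivable with $\alpha\to\beta$. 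Hence $\vdash \alpha\to\beta$.
\item Since $\Gamma$ is closed and $\vdash \alpha\to\beta$, \rn{Mon} gives $\alpha\to\beta\in\Gamma$, a contradiction.
\end{itemize}
The remaining clauses of (1) are routine.
\begin{itemize}
\item Disjointness: if $\chi\in\Gamma\cap{\downarrow}\{\alpha\to\beta\}$, then $\chi\vdash\alpha\to\beta$, so closure of $\Gamma$ puts $\alpha\to\beta$ in $\Gamma$, a contradiction.
\item $\alpha\in\Th(\alpha)$ holds by \rn{A}.
\item $\alpha\wedge\beta\in\Delta_y$ because $\alpha\wedge\beta\vdash\alpha\to\beta$ is \rn{$\wedge{\vdash}\to$}.
\end{itemize}

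\emph{Part (2).} I view $z$ as the $W_b$-point $(\Th(\alpha'),{\downarrow}\{\alpha'\to\bot\})$ with $\alpha' := \alpha\wedge\beta$. The key observation is that $\alpha\wedge\beta\notin\Delta_y$, which I prove as follows.
\begin{itemize}
\item Since $y\in W_d$, $\alpha\in\Gamma_y$, and $\Delta_y$ is $\to$-closed relative to $\Gamma_y$, the assumption $\alpha\wedge\beta\in\Delta_y$ would give $\alpha\to\beta\in\Delta_y$.
\item But $\alpha\to\beta\in\Gamma_x$, and $y\open x$ means $\Gamma_x\cap\Delta_y=\varnothing$. This is a contradiction.
\end{itemize}
From this observation both remaining requirements follow.
\begin{itemize}
\item $y\open z$: if $\chi\in\Gamma_z\cap\Delta_y$, then $\alpha\wedge\beta\vdash\chi$. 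Predecessor closure of $\Delta_y$ then puts $\alpha\wedge\beta\in\Delta_y$, contradicting the observation.
\item Side condition $\alpha\wedge\beta\nvdash\neg(\alpha\wedge\beta)$: otherwise \rn{Con$\bot$} gives $\alpha\wedge\beta\vdash\bot$. Since $\bot\in\Delta_y$ and $\Delta_y$ is closed under predecessors, again $\alpha\wedge\beta\in\Delta_y$, a contradiction.
\end{itemize}
Finally, $\alpha,\beta\in\Gamma_z$ holds by \rn{$\wedge$E}.

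The main obstacle is the side condition in (1), which needs the importation trick (\rn{DT$_0$} together with \rn{Imp} and \rn{Rep}) to turn $\alpha\vdash\alpha\to\beta$ into $\vdash\alpha\to\beta$. Everything in (2) reduces to the single observation that $\alpha\wedge\beta\notin\Delta_y$, which uses only the $\to$-closure clause of $W_d$.
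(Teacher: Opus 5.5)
Your proposal is correct and follows the paper's proof essentially step for step. In (1) you secure $\alpha\nvdash\alpha\to\beta$ by the same \rn{DT$_0$}--\rn{Imp}--\rn{Rep} argument. In (2), your observation that $\alpha\wedge\beta\notin\Delta_y$, combined with predecessor closure, is exactly the paper's claim that no $\chi\in\Delta_y$ satisfies $\alpha\wedge\beta\vdash\chi$. Both $y\open z$ and the $W_b$ side condition (via \rn{Con$\bot$}) follow from it.
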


\begin{proof}
(1) First, $\alpha \nvdash \alpha \to \beta$: otherwise \rn{DT$_0$} gives
$\vdash \alpha \to (\alpha \to \beta)$, and \rn{Imp} (with outer antecedent
$\alpha$; side condition $\alpha \wedge \beta \vdash \alpha$) together with
\rn{Rep} gives $\alpha \to (\alpha \to \beta) \vdash \alpha \wedge \alpha
\to \beta \dashv\vdash \alpha \to \beta$, so $\vdash \alpha \to \beta$ by
\rn{Cut}, whence $\alpha \to \beta \in \Th(\Gamma) \subseteq \Gamma$ by
\rn{Mon}, a contradiction. So $y \in W_b$ by definition. If $\chi \in
\Gamma \cap {\downarrow}\{\alpha \to \beta\}$, then $\Gamma \vdash \chi
\vdash \alpha \to \beta$, so $\alpha \to \beta \in \Gamma$, a
contradiction; so $\Gamma \cap \Delta_y = \varnothing$. Clearly $\alpha \in
\Th(\alpha)$, and $\alpha \wedge \beta \in {\downarrow}\{\alpha \to
\beta\}$ by \rn{$\wedge{\vdash}\to$}.

(2) Since $y \open x$, we have $\Gamma_x \cap \Delta_y = \varnothing$, so
$\alpha \to \beta \notin \Delta_y$. We claim there is no $\chi \in
\Delta_y$ with $\alpha \wedge \beta \vdash \chi$: otherwise $\alpha \wedge
\beta \in \Delta_y$ by predecessor closure, whence $\alpha \to \beta \in
\Delta_y$ by $\to$-closure relative to $\Gamma_y$ (as $\alpha \in
\Gamma_y$), a contradiction. In particular, since $\bot \in \Delta_y$,
$\alpha \wedge \beta \nvdash \bot$, so $\alpha \wedge \beta \nvdash
\neg(\alpha \wedge \beta)$ by \rn{Con$\bot$}, and thus $z \in W_b$.
Moreover, $\Gamma_z \cap \Delta_y = \varnothing$: any $\chi$ in the
intersection would satisfy $\alpha \wedge \beta \vdash \chi \in \Delta_y$,
contradicting the claim. Finally $\alpha, \beta \in \Th(\alpha \wedge
\beta)$ by \rn{$\wedge$E}.
\end{proof}

\begin{lemma}[Fixpoint Existence]\label{lem:fix-exist}
\textup{(Uses the basic rules, \rn{$\vdash\top\to$}, \rn{$\top\to$E},
\rn{DT$_0$}.)} Let $\Gamma$ be closed and $\gamma \notin \Gamma$. Then $y
:= (\Th(\top), {\downarrow}\{\top \to \gamma\}) \in W_b$, and it satisfies
$\gamma \in \Delta_y$ and $\Gamma \cap \Delta_y = \varnothing$.
\end{lemma}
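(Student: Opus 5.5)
The plan is to check the three claims in turn: membership in $W_b$, $\gamma\in\Delta_y$, and disjointness from $\Gamma$. Each reduces to a short derivation involving $\top\to\gamma$ and $\gamma$.

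\emph{Membership in $W_b$.} By the definition of $W_b$ with $\alpha:=\top$ and $\beta:=\gamma$, it suffices to show $\top\nvdash\top\to\gamma$. Suppose instead that $\top\vdash\top\to\gamma$. Cutting with \rn{$\top\to$E} ($\top\to\gamma\vdash\gamma$) gives $\top\vdash\gamma$, and then $\vdash\top$ from \rn{$\top$} with \rn{Cut} gives $\vdash\gamma$. Since $\Gamma$ is closed, \rn{Mon} yields $\gamma\in\Th(\Gamma)\subseteq\Gamma$, contradicting $\gamma\notin\Gamma$. Hence $y\in W_b$.

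\emph{$\gamma\in\Delta_y$.} By \rn{$\vdash\top\to$} we have $\gamma\vdash\top\to\gamma$, so $\gamma\in{\downarrow}\{\top\to\gamma\}=\Delta_y$.

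\emph{$\Gamma\cap\Delta_y=\varnothing$.} Suppose $\chi\in\Gamma$ and $\chi\vdash\top\to\gamma$. Then \rn{$\top\to$E} and \rn{Cut} give $\chi\vdash\gamma$. Since $\chi\in\Gamma$, \rn{Mon} gives $\Gamma\vdash\gamma$, and closure of $\Gamma$ forces $\gamma\in\Gamma$, again a contradiction.

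No step here is a real obstacle; the argument is a direct unfolding of definitions. The one point needing care is the membership step, where the nondegeneracy condition $\top\nvdash\top\to\gamma$ must be reduced to $\vdash\gamma$ using \rn{$\top\to$E}. \rn{DT$_0$} is listed among the rules used but does not seem essential to this route. It could be used alternatively to derive $\vdash\top\to\gamma$ from $\top\vdash\gamma$ if one prefers to phrase the contradiction via $\top\to\gamma\in\Gamma$.
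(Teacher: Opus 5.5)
Your proposal is correct and follows essentially the same route as the paper: you refute $\top\vdash\top\to\gamma$ by reducing it to $\vdash\gamma$ via \rn{$\top$}, \rn{$\top\to$E}, and \rn{Cut}, you obtain $\gamma\in\Delta_y$ from \rn{$\vdash\top\to$}, and you rule out any $\chi\in\Gamma\cap\Delta_y$ via $\Gamma\vdash\chi\vdash\top\to\gamma\vdash\gamma$ and closure of $\Gamma$. Your remark about \rn{DT$_0$} is also accurate, since the paper's own argument does not actually invoke it.
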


\begin{proof}
Since $\Gamma$ is closed and $\gamma \notin \Gamma$, we have $\nvdash
\gamma$. If $\top \vdash \top \to \gamma$, then $\vdash \top \to \gamma$ by
\rn{$\top$} and \rn{Cut}, so $\vdash \gamma$ by \rn{$\top\to$E} and
\rn{Cut}, a contradiction; hence $y \in W_b$. We have $\gamma \vdash \top
\to \gamma$ by \rn{$\vdash\top\to$}, so $\gamma \in \Delta_y$. If $\chi \in
\Gamma \cap \Delta_y$, then $\Gamma \vdash \chi \vdash \top \to \gamma
\vdash \gamma$, so $\gamma \in \Gamma$, a contradiction.
\end{proof}

\begin{lemma}[Pseudo-Symmetry Existence]\label{lem:ps-exist}
\textup{(Uses the basic rules, \rn{$\neg$Ant}, \rn{$\neg\neg$I}; hence
available for $\vdash\, = \,\vF$.)} Let $x, y \in W_d$ with $y \open x$.
Then $z := (\Gamma_x, {\downarrow}\neg[\Gamma_x]) \in W_a$ satisfies $z
\open y$ and $\Gamma_z = \Gamma_x$; in particular, $z \preref x$ in any
canonical frame $(W, \open)$ with $z \in W$, since $u \open z
\Leftrightarrow u \open x$ for all $u$.
\end{lemma}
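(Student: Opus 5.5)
Membership $z\in W_a$ comes from the definition of $W_a$: $\Gamma_x$ is consistent and closed because $x\in W_d$. Lemma~\ref{lem:least-ct}(1) then also places $z$ in $W_d$. By construction $\Gamma_z=\Gamma_x$, so only two claims need work: $z\open y$, and the prerefinement remark.

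\emph{Step 1: $z \open y$.} We must show $\Gamma_y\cap{\downarrow}\neg[\Gamma_x]=\varnothing$. Suppose toward a contradiction that $\chi\in\Gamma_y$ and $\chi\vdash\neg\gamma$ for some $\gamma\in\Gamma_x$.
\begin{itemize}
\item Closedness of $\Gamma_y$ gives $\neg\gamma\in\Gamma_y$.
\item By \rn{$\neg\neg$I}, $\gamma\vdash\neg\neg\gamma$.
\item Then $\neg\neg\gamma\in{\downarrow}\{\ldots\}$ is not what we want directly, so instead we use predecessor closure of $\Delta_y$ as follows.
\end{itemize}
Because $y\in W_d$, Lemma~\ref{lem:least-ct}(2) gives ${\downarrow}\neg[\Gamma_y]\subseteq\Delta_y$. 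Since $\neg\gamma\in\Gamma_y$, the formula $\neg\neg\gamma$ lies in $\neg[\Gamma_y]$. As $\gamma\vdash\neg\neg\gamma$, we get $\gamma\in{\downarrow}\neg[\Gamma_y]\subseteq\Delta_y$. But $\gamma\in\Gamma_x$, so $\gamma\in\Gamma_x\cap\Delta_y$, contradicting $y\open x$.

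This step is where \rn{$\neg\neg$I} is essential. The step I expected to be the obstacle was routing the witness into $\Delta_y$ rather than into $\Gamma_y$. Lemma~\ref{lem:least-ct}(2) resolves this; \rn{$\neg$Ant} enters only through that lemma and through Lemma~\ref{lem:least-ct}(1).

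\emph{Step 2: prerefinement.} For any $u$, the relation $u\open z$ means $\Gamma_z\cap\Delta_u=\varnothing$, and $u\open x$ means $\Gamma_x\cap\Delta_u=\varnothing$. Since $\Gamma_z=\Gamma_x$, these conditions are literally the same, so $u\open z\Leftrightarrow u\open x$. In particular the forward direction gives $z\preref x$ in any canonical frame $(W,\open)$ containing $z$, by Definition~\ref{def:frame-conds}.
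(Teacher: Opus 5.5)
Your proof is correct and has the same structure as the paper's. From $\chi\in\Gamma_y$ with $\chi\vdash\neg\gamma$ and $\gamma\in\Gamma_x$, you use \rn{$\neg\neg$I} together with Lemma~\ref{lem:least-ct}(2) for $y$ to put a formula into $\Gamma_x\cap\Delta_y$.

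The difference is the choice of that formula. The paper contraposes $\chi\vdash\neg\gamma$ with \rn{$\neg$Ant} and uses $\neg\chi$. You use closure of $\Gamma_y$ to get $\neg\gamma\in\Gamma_y$ and then use $\gamma$ itself; the paper makes the same closure move later, in the proof of Theorem~\ref{thm:canonical-strongps}. As a result, your argument needs \rn{$\neg$Ant} only to place $z$ in $W_d$ via Lemma~\ref{lem:least-ct}(1). Contrary to your remark, part~(2) of that lemma does not use \rn{$\neg$Ant}.

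The unfinished third bullet in Step~1 is a drafting leftover and can be deleted.
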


\begin{proof}
By Lemma~\ref{lem:least-ct}(1), $z \in W_a \subseteq W_d$. For $z \open y$
we must show $\Gamma_y \cap {\downarrow}\neg[\Gamma_x] = \varnothing$.
Suppose toward a contradiction that $\varphi \in \Gamma_y$ and $\varphi
\vdash \neg\gamma$ for some $\gamma \in \Gamma_x$. By \rn{$\neg$Ant},
$\neg\neg\gamma \vdash \neg\varphi$, and by \rn{$\neg\neg$I} and \rn{Cut},
$\gamma \vdash \neg\varphi$. Since $\gamma \in \Gamma_x$ and $\Gamma_x$ is
closed, $\neg\varphi \in \Gamma_x$. But $\neg\varphi \in
{\downarrow}\neg[\Gamma_y] \subseteq \Delta_y$ by
Lemma~\ref{lem:least-ct}(2) applied to $y$ (as $\varphi \in \Gamma_y$), so
$\neg\varphi \in \Gamma_x \cap \Delta_y$, contradicting $y \open x$.
Finally, since $\Gamma_z = \Gamma_x$, the condition $\Gamma_z \cap \Delta_u
= \varnothing$ defining $u \open z$ is the same as the condition defining
$u \open x$; so $z \preref x$.
\end{proof}

\subsection{Truth sets and the truth lemma}

\begin{lemma}[Truth sets]\label{lem:truth-sets}
Let $W_b \subseteq W \subseteq W_d$ and consider the canonical frame $(W,
\open)$. For all $\alpha, \beta \in \Fm$:
\begin{enumerate}[label=(\arabic*)]
\item $|\top|_W = W$;
\item $|\bot|_W = \varnothing$;
\item $|\alpha \wedge \beta|_W = |\alpha|_W \cap |\beta|_W$;
\item $|\alpha \to \beta|_W = |\alpha|_W \to_{\open} |\beta|_W$;
\item $|\alpha|_W \cup |\beta|_W \subseteq |\alpha \vee \beta|_W \subseteq
c_{\open}(|\alpha|_W \cup |\beta|_W)$.
\end{enumerate}
\end{lemma}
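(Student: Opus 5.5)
Items (1)--(3) and (5) are routine; the real work is item (4). For (1), $\top\in\Gamma_x$ for every $x$, since $\Gamma_x$ is closed and $\vdash\top$. For (2), $\bot\notin\Gamma_x$ by consistency. For (3), use closure of $\Gamma_x$ together with \rn{$\wedge$I} and \rn{$\wedge$E}. The first inclusion of (5) is closure plus \rn{$\vee$I}.

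For the second inclusion of (5), let $x\in|\alpha\vee\beta|_W$ and $y\open x$. I must find $z\in W$ with $y\open z$ and $z\in|\alpha|_W\cup|\beta|_W$. Since $\alpha\vee\beta\in\Gamma_x$ and $\Gamma_x\cap\Delta_y=\varnothing$, we have $\alpha\vee\beta\notin\Delta_y$. Because $\Delta_y$ is closed under $\vee$, either $\alpha\notin\Delta_y$ or $\beta\notin\Delta_y$; say $\alpha\notin\Delta_y$. Then no $\chi\in\Delta_y$ satisfies $\alpha\vdash\chi$, by predecessor closure. In particular $\alpha\nvdash\bot$, so $\alpha\nvdash\neg\alpha$ by \rn{Con$\bot$}. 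Take $z=(\Th(\alpha),{\downarrow}\{\alpha\to\bot\})$, which lies in $W_b\subseteq W$ because $\alpha\nvdash\alpha\to\bot$. Then $\Gamma_z\cap\Delta_y=\varnothing$, so $y\open z$, and $\alpha\in\Gamma_z$. This is exactly the device used in Lemma~\ref{lem:cond-exist}(2).

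For (4), left to right: let $x\in|\alpha\to\beta|_W$ and $y\open x$ with $y\in|\alpha|_W$. Lemma~\ref{lem:cond-exist}(2) directly gives $z\in W_b\subseteq W$ with $y\open z$ and $\alpha,\beta\in\Gamma_z$, so $z\in|\alpha|_W\cap|\beta|_W$.

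For (4), right to left, argue contrapositively. Suppose $\alpha\to\beta\notin\Gamma_x$. Lemma~\ref{lem:cond-exist}(1) gives $y=(\Th(\alpha),{\downarrow}\{\alpha\to\beta\})\in W_b\subseteq W$ with $y\open x$ and $\alpha\in\Gamma_y$. It remains to show that no $z\in W$ with $y\open z$ has $\alpha,\beta\in\Gamma_z$. If such a $z$ existed, then $\alpha\wedge\beta\in\Gamma_z$ by closure. But $\alpha\wedge\beta\in\Delta_y$ by \rn{$\wedge{\vdash}\to$}, contradicting $\Gamma_z\cap\Delta_y=\varnothing$. So $x\notin|\alpha|_W\to_{\open}|\beta|_W$.

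The main point to watch is the join clause (5). The construction of $z$ needs $\alpha\nvdash\neg\alpha$, which is why \rn{Con$\bot$} (and hence \rn{Exp}) enters. We only get an inclusion rather than an equality, which is the best one can hope for before the truth lemma identifies $|\varphi|_W$ with a proposition.
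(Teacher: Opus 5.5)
Your proof is correct and follows the paper's argument essentially step for step. Item (4) uses Lemma~\ref{lem:cond-exist}(2) and (1) exactly as the paper does, and for the second inclusion of (5) you use the same witness $(\Th(\alpha),{\downarrow}\{\neg\alpha\})\in W_b$, obtained from $\vee$-closure of $\Delta_y$, predecessor closure, and \rn{Con$\bot$}.
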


\begin{proof}
We omit the subscript $W$. (1) Since $\vdash\top$, closure of $\Gamma_x$
gives $\top\in\Gamma_x$ for every $x\in W$.

(2) If $\bot \in \Gamma_x$, then $\Gamma_x$ is inconsistent (or intersects
$\Delta_x \ni \bot$), contradicting $x \in W_d$.

(3) By \rn{$\wedge$I}, \rn{$\wedge$E}, and closure of $\Gamma_x$.

(4) ($\subseteq$) Suppose $\alpha \to \beta \in \Gamma_x$ and let $y \open
x$ with $y \in |\alpha|$, i.e., $\alpha \in \Gamma_y$. By
Lemma~\ref{lem:cond-exist}(2), there is $z \in W_b \subseteq W$ with $y
\open z$ and $\alpha, \beta \in \Gamma_z$, i.e., $z \in |\alpha| \cap
|\beta|$. Hence $x \in |\alpha| \to_{\open} |\beta|$.

($\supseteq$) Suppose $\alpha \to \beta \notin \Gamma_x$. By
Lemma~\ref{lem:cond-exist}(1), there is $y \in W_b \subseteq W$ with
$\Gamma_x \cap \Delta_y = \varnothing$ (i.e., $y \open x$), $\alpha \in
\Gamma_y$, and $\alpha \wedge \beta \in \Delta_y$. For any $z$ with $y
\open z$, we have $\Gamma_z \cap \Delta_y = \varnothing$; if $z \in
|\alpha| \cap |\beta|$, then $\alpha \wedge \beta \in \Gamma_z$ by
\rn{$\wedge$I} and closure, so $\alpha \wedge \beta \in \Gamma_z \cap
\Delta_y$, a contradiction. Hence no $z$ with $y \open z$ lies in $|\alpha|
\cap |\beta|$, while $y \in |\alpha|$; so $x \notin |\alpha| \to_{\open}
|\beta|$.

(5) The first inclusion holds by \rn{$\vee$I} and closure. For the second,
suppose $\alpha \vee \beta \in \Gamma_x$ and let $y \open x$; we must find
$z$ with $y \open z$ and $z \in |\alpha| \cup |\beta|$. Since $\alpha \vee
\beta \in \Gamma_x$ and $\Gamma_x \cap \Delta_y = \varnothing$, we have
$\alpha \vee \beta \notin \Delta_y$. We claim $\alpha \notin \Delta_y$ or
$\beta \notin \Delta_y$: otherwise $\alpha \vee \beta \in \Delta_y$ by
$\vee$-closure. Say $\alpha \notin \Delta_y$ (the other case is
symmetric); then by predecessor closure, $\Th(\alpha) \cap \Delta_y =
\varnothing$, and in particular $\bot \in \Delta_y$ gives $\alpha \nvdash
\bot$, so $\alpha \nvdash \neg\alpha$ by \rn{Con$\bot$}. Hence $z :=
(\Th(\alpha), {\downarrow}\{\neg\alpha\}) \in W_b \subseteq W$, and
$\Gamma_z \cap \Delta_y = \Th(\alpha) \cap \Delta_y = \varnothing$, i.e.,
$y \open z$, with $z \in |\alpha|$.
\end{proof}

\begin{lemma}[Fixpoint Lemma]\label{lem:fixpoint}
Let $W_b \subseteq W \subseteq W_d$. Then $|\alpha|_W \in \chi_{\open}(W)$
for every $\alpha \in \Fm$.
\end{lemma}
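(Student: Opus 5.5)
We have to show $c_{\open}(|\alpha|_W) \subseteq |\alpha|_W$; the reverse inclusion will be obtained in passing. The argument is direct and uses the Fixpoint Existence Lemma~\ref{lem:fix-exist} together with the fact that $W_b \subseteq W$; no induction on $\alpha$ is needed.

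\emph{Inflation.} Let $x \in |\alpha|_W$ and $y \open x$ with $y\in W$. Since $\alpha \in \Gamma_x$ and $\Gamma_x \cap \Delta_y = \varnothing$, we get $\alpha \notin \Delta_y$. Predecessor closure of $\Delta_y$ then gives $\Th(\alpha) \cap \Delta_y = \varnothing$, and $\bot \in \Delta_y$ gives $\alpha \nvdash \bot$. So $\alpha \nvdash \neg\alpha$ by \rn{Con$\bot$}. Hence $z := (\Th(\alpha), {\downarrow}\{\neg\alpha\}) \in W_b \subseteq W$. This $z$ satisfies $y \open z$ and $z \in |\alpha|_W$, exactly as in the proof of Lemma~\ref{lem:truth-sets}(5). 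Therefore $x \in \Box\blacklozenge|\alpha|_W = c_{\open}(|\alpha|_W)$.

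\emph{Closure (the main step).} Suppose $x \in W$ with $\alpha \notin \Gamma_x$; we show $x \notin c_{\open}(|\alpha|_W)$. Since $\Gamma_x$ is closed, Lemma~\ref{lem:fix-exist} supplies $y := (\Th(\top), {\downarrow}\{\top \to \alpha\}) \in W_b \subseteq W$ with $\Gamma_x \cap \Delta_y = \varnothing$, i.e.\ $y \open x$, and with $\alpha \in \Delta_y$. Now let $z\in W$ be any state with $y \open z$. Then $\Gamma_z \cap \Delta_y = \varnothing$, and since $\alpha \in \Delta_y$ this forces $\alpha \notin \Gamma_z$, i.e.\ $z \notin |\alpha|_W$. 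So $y$ is a state open to $x$ that is open to no state in $|\alpha|_W$, and hence $x \notin \Box\blacklozenge|\alpha|_W$.

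Combining the two inclusions gives $c_{\open}(|\alpha|_W) = |\alpha|_W$, i.e.\ $|\alpha|_W \in \chi_{\open}(W)$. The only point needing care is that the witnesses $y$ and $z$ really lie in $W$, and this is guaranteed by the hypothesis $W_b \subseteq W$.
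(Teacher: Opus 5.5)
Your closure step is exactly the paper's proof: you apply Lemma~\ref{lem:fix-exist} to $\Gamma_x$ to get $y=(\Th(\top),{\downarrow}\{\top\to\alpha\})\in W_b\subseteq W$ with $y\open x$ and $\alpha\in\Delta_y$, so $y$ is open to no state of $|\alpha|_W$. Your inflation step is correct but unnecessary, which is why the paper omits it: $c_{\open}$ is inflationary on every frame (given $x\in|\alpha|_W$ and $y\open x$, the state $z:=x$ itself is a witness), so neither \rn{Con$\bot$} nor $W_b\subseteq W$ is needed for that direction.
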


\begin{proof}
We must show $c_{\open}(|\alpha|) \subseteq |\alpha|$, i.e., if $\alpha
\notin \Gamma_x$ then there is $y \open x$ such that no $z$ with $y \open
z$ satisfies $\alpha \in \Gamma_z$. By Lemma~\ref{lem:fix-exist} applied to
$\Gamma_x$, there is $y \in W_b \subseteq W$ with $\alpha \in \Delta_y$ and
$\Gamma_x \cap \Delta_y = \varnothing$, i.e., $y \open x$. For any $z$ with
$y \open z$: $\Gamma_z \cap \Delta_y = \varnothing$ and $\alpha \in
\Delta_y$ give $\alpha \notin \Gamma_z$.
\end{proof}

\begin{theorem}[Canonical model]\label{thm:canonical}
Let $\vdash\, \in \{\vT, \vF\}$, let $W_c$ be any set with $W_a \cup W_b
\subseteq W_c \subseteq W_d$, and let $\mathcal{M}_c = (W_c, \open, V_c)$
with $V_c(p) = |p|_{W_c}$. Then:
\begin{enumerate}[label=(\arabic*)]
\item $\mathcal{M}_c$ is a model \textup{(}$V_c$ is admissible\textup{)};
\item \textup{(Truth Lemma)} $\true{\alpha}_{\mathcal{M}_c} =
|\alpha|_{W_c}$ for all $\alpha \in \Fm$;
\item $\open$ is reflexive on $W_c$; and if $\vdash\, = \,\vF$, then $(W_c,
\open)$ is an $\SysF$-frame.
\end{enumerate}
\end{theorem}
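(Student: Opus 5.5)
The proof assembles the lemmas of this section, applied with $W := W_c$. This is legitimate because $W_b \subseteq W_c \subseteq W_d$: we have $W_a \subseteq W_d$ by Lemma~\ref{lem:least-ct}(1) and $W_b \subseteq W_d$ by Lemma~\ref{lem:Wb-in-Wd}.

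For (1), the Fixpoint Lemma~\ref{lem:fixpoint} gives $|p|_{W_c} \in \chi_{\open}(W_c)$, so $V_c$ is admissible.

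For (2), I induct on $\alpha$.
\begin{itemize}
\item For variables the claim holds by definition of $V_c$.
\item For $\top$ and $\wedge$, use Lemma~\ref{lem:truth-sets}(1) and (3).
\item For $\bot$, Lemma~\ref{lem:truth-sets}(2) gives $|\bot| = \varnothing$. We need $\true{\bot} = \Box\varnothing = \varnothing$, which follows from reflexivity in (3); so I prove (3) first, as it does not depend on the Truth Lemma.
\item For $\to$, Lemma~\ref{lem:truth-sets}(4) and the induction hypothesis give $\true{\alpha\to\beta} = \true{\alpha}\to_{\open}\true{\beta} = |\alpha|\to_{\open}|\beta| = |\alpha\to\beta|$.
\item For $\vee$, I expect the only small obstacle, since Lemma~\ref{lem:truth-sets}(5) gives only a sandwich. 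Apply the monotone closure operator $c_{\open}$ to both inclusions. Because $|\alpha\vee\beta|$ is a fixpoint by Lemma~\ref{lem:fixpoint}, $c_{\open}(|\alpha|\cup|\beta|) \subseteq |\alpha\vee\beta|$. The upper inclusion gives the reverse. Hence $|\alpha\vee\beta| = c_{\open}(|\alpha|\cup|\beta|) = c_{\open}(\true{\alpha}\cup\true{\beta}) = \true{\alpha\vee\beta}$.
\end{itemize}
Monotonicity and idempotence of $c_{\open} = \Box\blacklozenge$ are standard (\cite[Prop.~2.10]{HM2026}).

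For (3), reflexivity: $x \open x$ iff $\Gamma_x \cap \Delta_x = \varnothing$, which is part of the definition of $W_d$.

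For pseudo-symmetry when $\vdash = \vF$, let $x, y \in W_c$ with $y \open x$. Lemma~\ref{lem:ps-exist} supplies $z = (\Gamma_x, {\downarrow}\neg[\Gamma_x]) \in W_a \subseteq W_c$ with $z \open y$. Since $\Gamma_z = \Gamma_x$, for every $u \in W_c$ we have $u \open z$ iff $u \open x$, so $z \preref x$ in $(W_c,\open)$. This is exactly pseudo-symmetry, so together with reflexivity $(W_c,\open)$ is an $\SysF$-frame.

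The only point needing care is that the witness $z$ must lie in $W_c$. This is why the statement requires $W_a \subseteq W_c$, while $W_b \subseteq W_c$ is what the truth-set and fixpoint lemmas need.
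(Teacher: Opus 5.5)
Your proposal is correct and follows the paper's proof essentially step for step. Admissibility comes from the Fixpoint Lemma. The Truth Lemma goes by induction via Lemma~\ref{lem:truth-sets}, using the sandwich-plus-fixpoint argument for $\vee$ and the independently proved reflexivity to get $\true{\bot}=\Box\varnothing=\varnothing$. Reflexivity and pseudo-symmetry come from $\Gamma_x\cap\Delta_x=\varnothing$ and the $W_a$-witness of Lemma~\ref{lem:ps-exist}.
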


\begin{proof}
(1) is immediate from Lemma~\ref{lem:fixpoint}.

(2) By induction on $\alpha$, using Lemma~\ref{lem:truth-sets}. The atomic
case holds by definition of $V_c$; the cases $\top$, $\bot$, $\wedge$, $\to$ are
clauses (1)--(4) of Lemma~\ref{lem:truth-sets} together with $\true{\bot} =
\Box\varnothing = \varnothing$ (which holds since $\open$ is reflexive by
part (3), proved independently below; alternatively, $\Box\varnothing
\subseteq \Box\blacklozenge|\bot| = |\bot|$ directly by
Lemma~\ref{lem:fixpoint} and monotonicity of $\Box$). For $\vee$: by
Lemma~\ref{lem:truth-sets}(5), monotonicity of $c_{\open}$, and
Lemma~\ref{lem:fixpoint},
\[
c_{\open}(|\alpha| \cup |\beta|) \subseteq c_{\open}(|\alpha \vee \beta|) =
|\alpha \vee \beta| \subseteq c_{\open}(|\alpha| \cup |\beta|),
\]
so $|\alpha \vee \beta| = c_{\open}(|\alpha| \cup |\beta|) =
c_{\open}(\true{\alpha} \cup \true{\beta}) = \true{\alpha \vee \beta}$ by
the inductive hypothesis.

(3) Reflexivity: for $x \in W_d$, $\Gamma_x \cap \Delta_x = \varnothing$ by
definition of $W_d$, so $x \open x$. Pseudo-symmetry for $\vdash\, =
\,\vF$: given $y \open x$ in $W_c$, Lemma~\ref{lem:ps-exist} provides $z
\in W_a \subseteq W_c$ with $z \open y$ and $z \preref x$.
\end{proof}

\begin{theorem}[Strong completeness]\label{thm:completeness}
For all $\Gamma \subseteq \Fm$ and $\varphi \in \Fm$:
\begin{enumerate}[label=(\arabic*)]
\item $\Gamma \vT \varphi$ \ iff \ $\Gamma \vDash_{\mathsf{C}_{\SysT}}
\varphi$;
\item $\Gamma \vF \varphi$ \ iff \ $\Gamma \vDash_{\mathsf{C}_{\SysF}}
\varphi$.
\end{enumerate}
\end{theorem}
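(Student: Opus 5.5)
The left-to-right directions are Theorem~\ref{thm:soundness}, so the work is in the right-to-left directions. Argue contrapositively: suppose $\Gamma \nvdash \varphi$, where $\vdash$ is $\vT$ or $\vF$. Let $\Gamma^* = \Th(\Gamma)$. By \rn{A} and \rn{Cut}, $\Gamma^*$ is closed, and $\varphi \notin \Gamma^*$. It is also consistent, since $\Gamma \vdash \bot$ would give $\Gamma \vdash \varphi$ by \rn{$\bot$E} and \rn{Cut}.

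Next, place $\Gamma^*$ inside the canonical model. Take $W_c = W_a \cup W_b$; this is allowed by Lemma~\ref{lem:least-ct}(1) and Lemma~\ref{lem:Wb-in-Wd}. Consider the state
\[
x := (\Gamma^*, {\downarrow}\neg[\Gamma^*]) \in W_a \subseteq W_c.
\]
By Theorem~\ref{thm:canonical}(1), $\mathcal{M}_c$ is a model. By Theorem~\ref{thm:canonical}(3), its frame is reflexive, and for $\vF$ it is moreover an $\SysF$-frame. Hence $\mathcal{M}_c \in \mathsf{C}_{\SysT}$, and in the $\vF$ case $\mathcal{M}_c \in \mathsf{C}_{\SysF}$.

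Now apply the Truth Lemma, Theorem~\ref{thm:canonical}(2). Every $\gamma \in \Gamma$ lies in $\Gamma^* = \Gamma_x$, so $\mathcal{M}_c, x \vDash \Gamma$. On the other hand, $\varphi \notin \Gamma_x$, so $\mathcal{M}_c, x \nvDash \varphi$. Therefore $\Gamma \nvDash_{\mathsf{C}_{\SysT}} \varphi$, and in the $\vF$ case $\Gamma \nvDash_{\mathsf{C}_{\SysF}} \varphi$.

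There is essentially no obstacle here, since all the substantive work is contained in Theorem~\ref{thm:canonical}. The only point to check is that no Lindenbaum-style maximalization is needed. The states are merely closed, consistent theories, so the deductive closure $\Th(\Gamma)$ already serves as the witnessing state. This is what makes the completeness strong, with $\Gamma$ an arbitrary, possibly infinite, set. Finitarity (Remark~\ref{rem:mon-fin}) is never used on this side; it enters only through soundness.
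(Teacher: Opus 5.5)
Your proposal is correct and is essentially the paper's own proof: pass to the closed, consistent theory $\Th(\Gamma)$, take the state $(\Th(\Gamma),{\downarrow}\neg[\Th(\Gamma)])\in W_a$ in the canonical model over $W_a\cup W_b$, and conclude from the Truth Lemma and the frame properties in Theorem~\ref{thm:canonical}. One small caveat concerns your closing remark that finitarity is never used: for infinite $\Gamma$, the closedness $\Th(\Th(\Gamma))\subseteq\Th(\Gamma)$ does not follow from a single application of \rn{Cut}; it is most easily obtained via finitarity (Remark~\ref{rem:mon-fin}), as the paper does for $\Thm$ in Lemma~\ref{lem:We-cont}, though an induction on the generation of $\vdash$ also works.
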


\begin{proof}
Left to right is Theorem~\ref{thm:soundness}. For the converse, suppose
$\Gamma \nvdash \varphi$, where $\vdash\, \in \{\vT, \vF\}$. Let $\Gamma' =
\Th(\Gamma)$; then $\Gamma'$ is closed, consistent (else $\Gamma \vdash
\bot \vdash \varphi$), and $\varphi \notin \Gamma'$. Let $x_0 = (\Gamma',
{\downarrow}\neg[\Gamma']) \in W_a$. Take $W_c = W_a \cup W_b$ and
$\mathcal{M}_c$ as in Theorem~\ref{thm:canonical}. By the Truth Lemma,
$\mathcal{M}_c, x_0 \vDash \gamma$ for each $\gamma \in \Gamma$ (as $\Gamma
\subseteq \Gamma'$) and $\mathcal{M}_c, x_0 \nvDash \varphi$. By
Theorem~\ref{thm:canonical}(3), $\mathcal{M}_c \in \mathsf{C}_{\SysT}$
(resp.~$\mathsf{C}_{\SysF}$ when $\vdash\, = \,\vF$). Hence $\Gamma
\nvDash_{\mathsf{C}_{\SysT}} \varphi$ (resp.~$\Gamma
\nvDash_{\mathsf{C}_{\SysF}} \varphi$).
\end{proof}

\begin{remark}[Algebraic representation]\label{rem:representation}
Combining Theorem~\ref{thm:completeness} with Theorem~\ref{thm:algebraic}
shows that the equational consequences of the class of $\SysF$-algebras and
of the class of dual algebras of $\SysF$-frames coincide. This belongs to
the tradition of bounded-lattice representation developed in
\cite{Urquhart1978,Ploscica1995}. More specifically, a direct algebra-level
representation theorem---every $\SysF$-algebra embeds into the dual algebra
of an $\SysF$-frame---can be extracted from the representation of arbitrary
bounded lattices with preconditionals in
\cite[\S~3]{Holliday2025} by restricting to \emph{disjoint} filter-ideal
pairs, in analogy with \cite[Thm.~2.15]{HM2026}; since we do not need this
finer result, we leave the details to the interested reader.
\end{remark}

\section{The finite model property}\label{sec:fmp}

In this section we refine the construction of Section~\ref{sec:canonical}
to obtain, for each pair of formulas $\varphi, \psi$ with $\varphi
\nvdash \psi$, a \emph{finite} countermodel. Throughout, $\vdash$ again
denotes $\vT$ or $\vF$. Fix formulas $\varphi_0, \psi_0 \in \Fm$; we aim to
decide $\varphi_0 \vdash \psi_0$.

\subsection{The closure set \texorpdfstring{$\Sigma$}{Sigma}}

\begin{definition}\label{def:Sigma}
Let $\Sigma$ be the smallest set of formulas such that:
\begin{enumerate}[label=(\roman*)]
\item $\varphi_0, \psi_0, \bot, \top \in \Sigma$;
\item $\Sigma$ is closed under subformulas;
\item if $\alpha \to \beta \in \Sigma$, then $\alpha \wedge \beta \in
\Sigma$.
\end{enumerate}
\end{definition}

\begin{lemma}\label{lem:Sigma-finite}
$\Sigma$ is finite; indeed $|\Sigma| \leq 2\,|{\SF(\varphi_0) \cup
\SF(\psi_0)}| + 4$, where $\SF$ denotes the set of subformulas.
\end{lemma}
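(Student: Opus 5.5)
We will exhibit an explicit finite set containing $\Sigma$ and verify that it satisfies the three closure conditions of Definition~\ref{def:Sigma}; minimality then gives the inclusion and hence the bound. Put $S = \SF(\varphi_0)\cup\SF(\psi_0)$, and let
\[
\Sigma' \;=\; S \cup \{\bot,\top\} \cup \{\alpha\wedge\beta \mid \alpha\to\beta \in S\}.
\]
Since $\alpha\to\beta\mapsto\alpha\wedge\beta$ is a function on the conditionals in $S$, the third part has at most $|S|$ elements. Hence $|\Sigma'| \le |S| + 2 + |S| \le 2|S|+4$.

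Next we check the closure conditions for $\Sigma'$. Condition (i) is immediate. For (ii), subformulas of members of $S$ lie in $S$, and $\bot,\top$ have no proper subformulas. The proper subformulas of a new conjunction $\alpha\wedge\beta$ are subformulas of $\alpha$ or of $\beta$; since $\alpha,\beta$ are subformulas of $\alpha\to\beta\in S$, they lie in $S$.

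For (iii), take a conditional $\alpha\to\beta \in \Sigma'$. We must rule out its lying in the two added parts. It is not $\bot$ or $\top$, and it is not a conjunction, since the main connective is $\to$, by unique readability of the grammar. So $\alpha\to\beta\in S$, and then $\alpha\wedge\beta\in\Sigma'$ by construction.

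The only point needing care is this last step: the added conjunctions generate no new conditionals at top level, and their immediate subformulas already lie in $S$. This is why the process stops after one round, giving the linear rather than iterated bound. By minimality, $\Sigma\subseteq\Sigma'$, so $\Sigma$ is finite with $|\Sigma|\le 2|S|+4$.
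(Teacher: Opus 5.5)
Your proposal is correct and follows the paper's proof: the paper also shows $\Sigma\subseteq\Sigma_0\cup\{\alpha\wedge\beta\mid\alpha\to\beta\in\Sigma_0\}$, with $\Sigma_0=\SF(\varphi_0)\cup\SF(\psi_0)\cup\{\bot,\top\}$, by checking (i)--(iii), and its key observation is likewise that the added conjunctions contribute no new conditionals. Your explicit count, which in fact gives the sharper bound $2|S|+2$, supplies the size estimate that the paper leaves implicit.
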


\begin{proof}
Let $\Sigma_0 = \SF(\varphi_0) \cup \SF(\psi_0) \cup \{\bot, \top\}$.
We claim $\Sigma \subseteq \Sigma_0 \cup \{\alpha
\wedge \beta \mid \alpha \to \beta \in \Sigma_0\} =: \Sigma_1$. It suffices
to check that $\Sigma_1$ satisfies (i)--(iii). For (ii): the subformulas of
a new conjunction $\alpha \wedge \beta$ (with $\alpha \to \beta \in
\Sigma_0$) are $\alpha \wedge \beta$ itself together with subformulas of
$\alpha$ and $\beta$, all of which lie in $\Sigma_0$ by subformula closure
of $\Sigma_0$. For (iii): every conditional in $\Sigma_1$ already lies in
$\Sigma_0$, since the added formulas are conjunctions and contribute no new
conditionals as members of $\Sigma_1$; so its companion conjunction is in
$\Sigma_1$ by construction.
\end{proof}

\subsection{The finite canonical model}

\begin{definition}\label{def:Sigma-notions}
For $\Gamma \subseteq \Fm$ and $\Xi \subseteq \Fm$, define
\[
\Th_{\Sigma}(\Gamma) = \{\chi \in \Sigma \mid \Gamma \vdash \chi\},
\qquad
{\downarrow_{\Sigma}}\Xi = \{\chi \in \Sigma \mid \chi \vdash \psi \text{
for some } \psi \in \Xi\}.
\]
Say $\Gamma$ is \emph{$\Sigma$-closed} if $\Th_{\Sigma}(\Gamma) \subseteq \Gamma$.
Say $\Delta \subseteq \Sigma$ is \emph{$\to_{\Sigma}$-closed
relative to $\Gamma$} if for all $\alpha \in \Gamma$ and $\beta \in \Fm$
with $\alpha \to \beta \in \Sigma$: if $\alpha \wedge \beta \vdash \chi$
for some $\chi \in \Delta$, then $\alpha \to \beta \in \Delta$.
\begin{align*}
W^{\Sigma}_d &= \bigl\{(\Gamma, \Delta) \;\big|\; \Gamma, \Delta \subseteq
\Sigma;\ \Gamma \text{ is $\Sigma$-closed};\ \bot \in \Delta;\ \Delta
\text{ is closed under $\Sigma$-predecessors}\\
&\qquad\quad \text{(${\downarrow_{\Sigma}}\Delta \subseteq \Delta$) and
under $\Sigma$-joins (if $\chi_1, \chi_2 \in \Delta$ and $\chi_1 \vee
\chi_2 \in \Sigma$, then $\chi_1 \vee \chi_2 \in \Delta$);}\\
&\qquad\quad \Delta \text{ is $\to_{\Sigma}$-closed relative to } \Gamma;
\text{ and } \Gamma \cap \Delta = \varnothing \bigr\};\\
W^{\Sigma}_b &= \bigl\{(\Th_{\Sigma}(\alpha),
{\downarrow_{\Sigma}}\{\alpha \to \beta\}) \;\big|\; \alpha, \beta \in
\Sigma,\ \alpha \nvdash \alpha \to \beta \bigr\},
\end{align*}
with the openness relation $y \open x$ iff $\Gamma_x \cap \Delta_y =
\varnothing$, and $|\chi|_W = \{x \in W \mid \chi \in \Gamma_x\}$ as
before. Note that $W^{\Sigma}_d$ is finite, of size at most $4^{|\Sigma|}$.
\end{definition}

Note that this is the derivability-based formulation used in the previous
chapter, now relativized to $\Sigma$. The companion-closure condition on
$\Sigma$ (if $\alpha \to \beta \in \Sigma$, then $\alpha \wedge \beta \in \Sigma$)
ensures that the relevant conjunctions stay inside $\Sigma$.
\begin{lemma}\label{lem:Wb-in-Wd-fin}
\textup{(Uses the basic rules, \rn{CAS$'$}.)} $W^{\Sigma}_b \subseteq
W^{\Sigma}_d$.
\end{lemma}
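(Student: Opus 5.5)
The plan is to mirror the proof of Lemma~\ref{lem:Wb-in-Wd}, checking each clause of the definition of $W^{\Sigma}_d$ for $x = (\Th_{\Sigma}(\alpha), {\downarrow_{\Sigma}}\{\alpha\to\beta\})$ with $\alpha,\beta\in\Sigma$ and $\alpha\nvdash\alpha\to\beta$. First, both components are subsets of $\Sigma$ by definition. $\Sigma$-closure of $\Th_{\Sigma}(\alpha)$ is immediate from \rn{Cut}: if $\Th_{\Sigma}(\alpha)\vdash\chi$ with $\chi\in\Sigma$, then $\alpha\vdash\chi$ (using finitarity and \rn{$\wedge$I}, or directly \rn{Cut} repeatedly), so $\chi\in\Th_{\Sigma}(\alpha)$. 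Next, $\bot\in\Sigma$ and $\bot\vdash\alpha\to\beta$ by \rn{$\bot$E}, so $\bot\in\Delta_x$. Closure under $\Sigma$-predecessors follows from \rn{Cut}, since anything in $\Sigma$ entailing a member of ${\downarrow_{\Sigma}}\{\alpha\to\beta\}$ entails $\alpha\to\beta$. Closure under $\Sigma$-joins follows from \rn{$\vee$E}: if $\chi_1,\chi_2\vdash\alpha\to\beta$ and $\chi_1\vee\chi_2\in\Sigma$, then $\chi_1\vee\chi_2\in\Delta_x$.

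The only clause using a conditional rule is $\to_{\Sigma}$-closure relative to $\Th_{\Sigma}(\alpha)$. Suppose $\varphi\in\Th_{\Sigma}(\alpha)$, $\varphi\to\psi\in\Sigma$, and $\varphi\wedge\psi\vdash\chi$ for some $\chi\in\Delta_x$. Since $\chi\vdash\alpha\to\beta$, \rn{Cut} gives $\varphi\wedge\psi\vdash\alpha\to\beta$. Together with $\alpha\vdash\varphi$, \rn{CAS$'$} (with $\varphi,\psi$ in the roles of its $\alpha,\beta$ and $\alpha,\beta$ in the roles of its $\varphi,\psi$) yields $\varphi\to\psi\vdash\alpha\to\beta$. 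As $\varphi\to\psi\in\Sigma$ by hypothesis, it lies in ${\downarrow_{\Sigma}}\{\alpha\to\beta\}=\Delta_x$. This is the point where the relativization matters: the definition of $\to_{\Sigma}$-closure only quantifies over conditionals already in $\Sigma$, so membership in ${\downarrow_{\Sigma}}$ is automatic. I expect this step to be the main (though mild) obstacle, in the sense that one must check that the $\Sigma$-restriction is placed exactly so that the argument goes through. Note that $\varphi\wedge\psi$ need not itself be in $\Delta_x$ here, which is why the definition is phrased with ``$\vdash\chi$ for some $\chi\in\Delta$''.

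Finally, for disjointness, if $\chi\in\Th_{\Sigma}(\alpha)\cap{\downarrow_{\Sigma}}\{\alpha\to\beta\}$, then $\alpha\vdash\chi\vdash\alpha\to\beta$, contradicting $\alpha\nvdash\alpha\to\beta$ by \rn{Cut}. The definition of $W^{\Sigma}_d$ imposes no consistency requirement, so nothing further is needed. (Consistency holds anyway, since $\alpha\vdash\bot$ would give $\alpha\vdash\alpha\to\beta$.)
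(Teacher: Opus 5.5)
Your proof is correct and follows the paper's argument: the paper just says the proof of Lemma~\ref{lem:Wb-in-Wd} goes through with $\Th_{\Sigma}$ and ${\downarrow_{\Sigma}}$, and you have written out the clause-by-clause check, with \rn{CAS$'$} handling $\to_{\Sigma}$-closure. One side remark is wrong but harmless: since $\varphi\to\psi\in\Sigma$, companion closure gives $\varphi\wedge\psi\in\Sigma$, so $\Sigma$-predecessor closure does put $\varphi\wedge\psi$ into $\Delta_x$; the ``$\vdash\chi$ for some $\chi\in\Delta$'' phrasing is therefore not needed for the reason you give.
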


\begin{proof}
This is Lemma~\ref{lem:Wb-in-Wd} restricted to $\Sigma$.  The same
argument applies to $\Th_{\Sigma}$ and $\downarrow_{\Sigma}$ throughout:
every formula quantified over in the closure clauses is now required to
belong to $\Sigma$, and the conclusions of the corresponding steps in the
proof of Lemma~\ref{lem:Wb-in-Wd} are precisely such formulas.
\end{proof}

\begin{lemma}[$\Sigma$-Conditional Existence]\label{lem:cond-exist-fin}
\textup{(Rules as in Lemma~\ref{lem:cond-exist}.)}
\begin{enumerate}[label=(\arabic*)]
\item Let $\alpha \to \beta \in \Sigma$ and let $\Gamma$ be
$\Sigma$-closed with $\alpha \to \beta \notin \Gamma$. Then $y :=
(\Th_{\Sigma}(\alpha), {\downarrow_{\Sigma}}\{\alpha \to \beta\}) \in
W^{\Sigma}_b$, with $\Gamma \cap \Delta_y = \varnothing$, $\alpha \in
\Gamma_y$, and $\alpha \wedge \beta \in \Delta_y$.
\item Let $x \in W^{\Sigma}_d$ with $\alpha \to \beta \in \Gamma_x$, and
let $y \in W^{\Sigma}_d$ with $y \open x$ and $\alpha \in \Gamma_y$. Then
$z := (\Th_{\Sigma}(\alpha \wedge \beta),
{\downarrow_{\Sigma}}\{\neg(\alpha \wedge \beta)\}) \in W^{\Sigma}_b$, with
$y \open z$ and $\alpha, \beta \in \Gamma_z$.
\end{enumerate}
\end{lemma}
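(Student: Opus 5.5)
The plan is to rerun the proof of Lemma~\ref{lem:cond-exist}, relativized to $\Sigma$. At each step we check two things: that the formulas the argument needs are actually in $\Sigma$, and that the $\Sigma$-relativized closure conditions of $W^{\Sigma}_d$ are enough. Membership in $W^{\Sigma}_b$ will always come from the defining side condition $\alpha\nvdash\alpha\to\beta$ together with Lemma~\ref{lem:Wb-in-Wd-fin}. The one new requirement is that the parameters lie in $\Sigma$. In (1), $\alpha\in\Sigma$ because $\Sigma$ is closed under subformulas; $\beta\in\Sigma$ for the same reason. In (2), the pair $(\alpha\wedge\beta,\bot)$ should witness $z$, so we need $\alpha\wedge\beta\in\Sigma$ and $\bot\in\Sigma$. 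These are exactly clauses (iii) and (i) of Definition~\ref{def:Sigma}, once we know $\alpha\to\beta\in\Sigma$. That holds because $\alpha\to\beta\in\Gamma_x\subseteq\Sigma$. Note also that $\neg(\alpha\wedge\beta)=(\alpha\wedge\beta)\to\bot$, so the pair has the required shape.

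For (1), we first show $\alpha\nvdash\alpha\to\beta$ exactly as before. If $\alpha\vdash\alpha\to\beta$, then \rn{DT$_0$}, \rn{Imp} and \rn{Rep} give $\vdash\alpha\to\beta$. Then $\Gamma\vdash\alpha\to\beta$ by \rn{Mon}, and since $\alpha\to\beta\in\Sigma$, $\Sigma$-closure puts it in $\Gamma$, a contradiction. Next, disjointness: if $\chi\in\Gamma\cap{\downarrow_\Sigma}\{\alpha\to\beta\}$, then $\Gamma\vdash\alpha\to\beta$, and the same $\Sigma$-closure argument applies. Finally, $\alpha\in\Th_\Sigma(\alpha)$ because $\alpha\in\Sigma$. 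Also $\alpha\wedge\beta\in{\downarrow_\Sigma}\{\alpha\to\beta\}$, by \rn{$\wedge{\vdash}\to$} together with $\alpha\wedge\beta\in\Sigma$, which is clause (iii).

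For (2), the key claim is that no $\chi\in\Delta_y$ satisfies $\alpha\wedge\beta\vdash\chi$. Here the $\to_\Sigma$-closure of $\Delta_y$ relative to $\Gamma_y$ is tailored to the situation: it is stated with the hypothesis ``$\alpha\wedge\beta\vdash\chi$ for some $\chi\in\Delta$'' and requires only $\alpha\to\beta\in\Sigma$. So such a $\chi$, together with $\alpha\in\Gamma_y$, forces $\alpha\to\beta\in\Delta_y$. But $\alpha\to\beta\in\Gamma_x$, and $\Gamma_x\cap\Delta_y=\varnothing$ because $y\open x$, a contradiction. With the claim in hand, we take $\chi=\bot\in\Delta_y$ to get $\alpha\wedge\beta\nvdash\bot$. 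Then \rn{Con$\bot$} gives $\alpha\wedge\beta\nvdash\neg(\alpha\wedge\beta)$, so $z\in W^{\Sigma}_b$. Any $\chi\in\Gamma_z\cap\Delta_y$ would satisfy $\alpha\wedge\beta\vdash\chi\in\Delta_y$, which the claim excludes; hence $y\open z$. Lastly, $\alpha,\beta\in\Th_\Sigma(\alpha\wedge\beta)$ follows from \rn{$\wedge$E} and the fact that $\alpha,\beta\in\Sigma$ as subformulas.

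The main obstacle is the bookkeeping in (2). Predecessor closure is only available within $\Sigma$, so the original argument's step ``$\alpha\wedge\beta\in\Delta_y$ by predecessor closure'' has to be avoided or justified. We plan to sidestep it entirely by using the existential form of $\to_\Sigma$-closure in Definition~\ref{def:Sigma-notions}, which needs no membership of $\alpha\wedge\beta$ in $\Delta_y$. With that choice, the remaining care is only in checking the memberships in $\Sigma$ listed above.
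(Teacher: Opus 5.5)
Your proposal is correct and matches the paper's proof, which likewise relativizes Lemma~\ref{lem:cond-exist} to $\Sigma$ using subformula closure, companion closure, and $\Sigma$-closure of $\Gamma$. The one small difference is in (2): you apply the existential form of $\to_{\Sigma}$-closure directly, whereas the paper first uses $\Sigma$-predecessor closure, which is legitimate because $\alpha\wedge\beta\in\Sigma$, so the step you flagged as an obstacle also goes through as originally stated.
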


\begin{proof}
Both parts are the restrictions to $\Sigma$ of the corresponding parts of
Lemma~\ref{lem:cond-exist}.  In (1), subformula closure gives $\alpha,\beta
\in\Sigma$, companion closure gives $\alpha\wedge\beta\in\Sigma$, and
$\Sigma$-closure of $\Gamma$ replaces closure of $\Gamma$ in the
theorem-based contradiction.  In (2), $\alpha\to\beta\in\Gamma_x\subseteq
\Sigma$ again gives $\alpha\wedge\beta\in\Sigma$; the original argument
then uses only $\Sigma$-predecessor and $\to_{\Sigma}$-closure.  Thus its
witnesses are the displayed members of $W^{\Sigma}_b$.
\end{proof}

\begin{lemma}[$\Sigma$-Fixpoint Existence]\label{lem:fix-exist-fin}
\textup{(Rules as in Lemma~\ref{lem:fix-exist}.)} Let $\gamma \in \Sigma$
and let $\Gamma$ be $\Sigma$-closed with $\gamma \notin \Gamma$. Then $y :=
(\Th_{\Sigma}(\top), {\downarrow_{\Sigma}}\{\top \to \gamma\}) \in
W^{\Sigma}_b$, with $\gamma \in \Delta_y$ and $\Gamma \cap \Delta_y =
\varnothing$.
\end{lemma}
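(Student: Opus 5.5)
The plan is to adapt the proof of Lemma~\ref{lem:fix-exist} to $\Sigma$, verifying membership in $W^{\Sigma}_b$ against its defining side conditions. These are that $\top,\gamma\in\Sigma$ and that $\top\nvdash\top\to\gamma$. The first is automatic: $\top\in\Sigma$ by clause (i) of Definition~\ref{def:Sigma}, and $\gamma\in\Sigma$ by hypothesis. Note that the definition of $W^{\Sigma}_b$ only asks $\alpha,\beta\in\Sigma$; it does not ask $\alpha\to\beta\in\Sigma$. So nothing further about $\top\to\gamma$ is needed, and its lying outside $\Sigma$ does no harm, since ${\downarrow_{\Sigma}}\{\top\to\gamma\}$ only collects $\Sigma$-formulas below it.

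First, $\nvdash\gamma$. If $\vdash\gamma$, then $\Gamma\vdash\gamma$ by \rn{Mon}. As $\gamma\in\Sigma$, this gives $\gamma\in\Th_{\Sigma}(\Gamma)\subseteq\Gamma$ by $\Sigma$-closure, contradicting $\gamma\notin\Gamma$. Next, suppose $\top\vdash\top\to\gamma$. Then \rn{$\top$} and \rn{Cut} give $\vdash\top\to\gamma$, and \rn{$\top\to$E} with \rn{Cut} gives $\vdash\gamma$, a contradiction. Hence $y\in W^{\Sigma}_b$, and by Lemma~\ref{lem:Wb-in-Wd-fin} also $y\in W^{\Sigma}_d$.

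For $\gamma\in\Delta_y$, we have $\gamma\vdash\top\to\gamma$ by \rn{$\vdash\top\to$}, and $\gamma\in\Sigma$, so $\gamma\in{\downarrow_{\Sigma}}\{\top\to\gamma\}$.

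For disjointness, let $\chi\in\Gamma\cap\Delta_y$. Then $\chi\vdash\top\to\gamma\vdash\gamma$ by \rn{$\top\to$E}, and since $\chi\in\Gamma$, \rn{A}, \rn{Mon} and \rn{Cut} give $\Gamma\vdash\gamma$. Because $\gamma\in\Sigma$, $\Sigma$-closure yields $\gamma\in\Gamma$, a contradiction.

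The only point needing care, rather than a genuine obstacle, is that $\Sigma$-closure applies only to formulas in $\Sigma$. So each derived conclusion must be routed to $\gamma$ itself, which lies in $\Sigma$, rather than to $\top\to\gamma$, which may not. This is exactly what the two uses of \rn{$\top\to$E} accomplish.
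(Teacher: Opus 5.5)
Your proof is correct and follows the paper's route: the paper's proof is just the remark that Lemma~\ref{lem:fix-exist} goes through restricted to $\Sigma$, since $\top,\gamma\in\Sigma$ and $\Sigma$-closure can stand in for closure of $\Gamma$ whenever the inferred formula is $\gamma$. You make explicit that closure is invoked twice (for $\nvdash\gamma$ and for disjointness), both times for $\gamma$, and that $\top\to\gamma$ need not lie in $\Sigma$. Both points are slightly more careful than the paper's claim that closure is used only once.
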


\begin{proof}
This is Lemma~\ref{lem:fix-exist} restricted to $\Sigma$.  Here
$\top,\gamma\in\Sigma$, and the only use of closure of $\Gamma$ in that
proof is to infer membership from the theorem $\vdash\gamma$, which is
equally supplied by $\Sigma$-closure.
\end{proof}

\begin{lemma}[$\Sigma$-Pseudo-Symmetry Existence]\label{lem:ps-exist-fin}
\textup{(Uses the basic rules, \rn{$\to$Mon}, \rn{$\neg$Ant},
\rn{$\neg\neg$I}, \rn{Con$\bot$}; hence available for $\vdash\, =
\,\vF$.)} Let $x, y \in W^{\Sigma}_b$ with $y \open x$, say $x =
(\Th_{\Sigma}(\alpha), {\downarrow_{\Sigma}}\{\alpha \to \beta\})$ and $y =
(\Th_{\Sigma}(\varphi), {\downarrow_{\Sigma}}\{\varphi \to \psi\})$. Then
$z := (\Th_{\Sigma}(\alpha), {\downarrow_{\Sigma}}\{\neg\alpha\}) \in
W^{\Sigma}_b$ satisfies $z \open y$ and $\Gamma_z = \Gamma_x$; in
particular $z \preref x$ in $(W^{\Sigma}_b, \open)$.
\end{lemma}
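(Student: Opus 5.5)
The plan is to verify the three claims in turn: $z \in W^{\Sigma}_b$, then $z \open y$, then $\Gamma_z = \Gamma_x$ together with $z \preref x$. Throughout, the one thing to watch is that every witness formula we use lies in $\Sigma$. The first and third claims are routine; the only real work is $z \open y$.

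\emph{Membership.} The definition of $W^{\Sigma}_b$ requires $\alpha, \bot \in \Sigma$ and $\alpha \nvdash \alpha \to \bot$, i.e.\ $\alpha \nvdash \neg\alpha$. We have $\alpha \in \Sigma$ because $x \in W^{\Sigma}_b$, and $\bot \in \Sigma$ by Definition~\ref{def:Sigma}(i). By \rn{Con$\bot$} it suffices to show $\alpha \nvdash \bot$. Suppose instead $\alpha \vdash \bot$. Then $\bot \in \Th_{\Sigma}(\alpha) = \Gamma_x$. But $\bot \in \Delta_x$, because $x \in W^{\Sigma}_b \subseteq W^{\Sigma}_d$ by Lemma~\ref{lem:Wb-in-Wd-fin}. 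This contradicts $\Gamma_x \cap \Delta_x = \varnothing$.

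\emph{The relation $z \open y$.} We must show $\Gamma_y \cap {\downarrow_{\Sigma}}\{\neg\alpha\} = \varnothing$. Suppose some $\chi \in \Sigma$ satisfies $\varphi \vdash \chi$ and $\chi \vdash \neg\alpha$, so $\varphi \vdash \neg\alpha$.
\begin{itemize}
\item By \rn{$\neg$Ant}, $\neg\neg\alpha \vdash \neg\varphi$. Combining with \rn{$\neg\neg$I} and \rn{Cut} gives $\alpha \vdash \neg\varphi = \varphi \to \bot$.
\item Since $\bot \vdash \psi$, \rn{$\to$Mon} gives $\varphi \to \bot \vdash \varphi \to \psi$. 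Hence $\alpha \vdash \varphi \to \psi$.
\end{itemize}
Now $\alpha \in \Sigma$, so $\alpha \in {\downarrow_{\Sigma}}\{\varphi \to \psi\} = \Delta_y$. Also $\alpha \in \Th_{\Sigma}(\alpha) = \Gamma_x$. Thus $\alpha \in \Gamma_x \cap \Delta_y$, contradicting $y \open x$.

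The point needing care is exactly here. In the unrestricted Lemma~\ref{lem:ps-exist}, the witness was $\neg\varphi$, which need not belong to $\Sigma$. The fix is to push the argument all the way to $\varphi \to \psi$. Then $\alpha$ itself serves as the witness, and it is guaranteed to lie in $\Sigma$.

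\emph{Equality and prerefinement.} By definition $\Gamma_z = \Th_{\Sigma}(\alpha) = \Gamma_x$. For any $u \in W^{\Sigma}_b$, the condition $u \open z$ is $\Gamma_z \cap \Delta_u = \varnothing$, which is literally the same condition as $u \open x$. Hence $z \preref x$ in $(W^{\Sigma}_b, \open)$.
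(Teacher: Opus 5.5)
Your proof is correct and follows the paper's argument essentially step for step. In particular, you make the same key move of pushing $\varphi\vdash\neg\alpha$ through \rn{$\neg$Ant}, \rn{$\neg\neg$I}, and \rn{$\to$Mon} to $\alpha\vdash\varphi\to\psi$, so that $\alpha\in\Sigma$ itself witnesses $\Gamma_x\cap\Delta_y\neq\varnothing$.

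The only cosmetic difference is the membership step. The paper gets $\alpha\nvdash\bot$ directly from $\alpha\nvdash\alpha\to\beta$ by \rn{$\bot$E} and \rn{Cut}. You route it through $\bot\in\Gamma_x\cap\Delta_x$ via Lemma~\ref{lem:Wb-in-Wd-fin}, which unpacks to the same thing.
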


\begin{proof}
Since $\alpha \nvdash \alpha \to \beta$, \rn{$\bot$E} and \rn{Cut} give
$\alpha \nvdash \bot$, so $\alpha \nvdash \neg\alpha$ by \rn{Con$\bot$};
with $\alpha, \bot \in \Sigma$, we get $z \in W^{\Sigma}_b$. For $z \open
y$, suppose toward a contradiction that $\chi \in \Th_{\Sigma}(\varphi)
\cap {\downarrow_{\Sigma}}\{\neg\alpha\}$. Then $\varphi \vdash \chi \vdash
\neg\alpha$, so by \rn{$\neg$Ant} and \rn{$\neg\neg$I} with \rn{Cut},
$\alpha \vdash \neg\neg\alpha \vdash \neg\varphi$. Since $\neg\varphi =
\varphi \to \bot \vdash \varphi \to \psi$ by \rn{$\to$Mon} and
\rn{$\bot$E}, we get $\alpha \vdash \varphi \to \psi$. But then $\alpha \in
\Gamma_x \cap \Delta_y$ (as $\alpha \in \Sigma$), contradicting $y \open
x$. Finally $\Gamma_z = \Th_{\Sigma}(\alpha) = \Gamma_x$, so $u \open z
\Leftrightarrow u \open x$ for all $u$, whence $z \preref x$.
\end{proof}

\subsection{Truth lemma and the finite model property}

\begin{lemma}[$\Sigma$-Truth sets]\label{lem:truth-sets-fin}
Let $W := W^{\Sigma}_b$ with the canonical frame $(W, \open)$. For all
formulas:
\begin{enumerate}[label=(\arabic*)]
\item $|\bot|_W = \varnothing$;
\item if $\alpha \wedge \beta \in \Sigma$, then $|\alpha \wedge \beta|_W =
|\alpha|_W \cap |\beta|_W$;
\item if $\alpha \to \beta \in \Sigma$, then $|\alpha \to \beta|_W =
|\alpha|_W \to_{\open} |\beta|_W$;
\item if $\alpha \vee \beta \in \Sigma$, then $|\alpha|_W \cup |\beta|_W
\subseteq |\alpha \vee \beta|_W \subseteq c_{\open}(|\alpha|_W \cup
|\beta|_W)$;
\item $|\gamma|_W \in \chi_{\open}(W)$ for every $\gamma \in \Sigma$.
\end{enumerate}
\end{lemma}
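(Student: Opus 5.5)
I will mirror the proof of Lemma~\ref{lem:truth-sets} and Lemma~\ref{lem:fixpoint} item by item, replacing each use of full closure by $\Sigma$-closure and each existence lemma by its $\Sigma$-version (Lemmas~\ref{lem:cond-exist-fin}, \ref{lem:fix-exist-fin}). Throughout I use $W = W^\Sigma_b \subseteq W^\Sigma_d$ (Lemma~\ref{lem:Wb-in-Wd-fin}), so every point has $\Gamma_x\cap\Delta_x=\varnothing$, $\bot\in\Delta_x$, and the $\Sigma$-relativized closure conditions.

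For (1): if $\bot\in\Gamma_x$ then $\bot\in\Gamma_x\cap\Delta_x$, impossible. For (2): since $\Gamma_x=\Th_\Sigma(\alpha')$ for some $\alpha'$ and $\alpha\wedge\beta\in\Sigma$ forces $\alpha,\beta\in\Sigma$ by subformula closure, \rn{$\wedge$I}, \rn{$\wedge$E} and \rn{Cut} give both inclusions. For (4), the first inclusion is \rn{$\vee$I}. For the second, given $\alpha\vee\beta\in\Gamma_x$ and $y\open x$, I note $\alpha\vee\beta\notin\Delta_y$, so by $\Sigma$-join closure one of $\alpha,\beta$, say $\alpha$, is not in $\Delta_y$. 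Predecessor closure of $\Delta_y$ (with $\Delta_y\subseteq\Sigma$) then gives $\Th_\Sigma(\alpha)\cap\Delta_y=\varnothing$, and $\alpha\nvdash\bot$ since $\bot\in\Delta_y$. Hence $z=(\Th_\Sigma(\alpha),{\downarrow_\Sigma}\{\alpha\to\bot\})\in W^\Sigma_b$; this needs $\alpha\nvdash\alpha\to\bot$, which follows from \rn{Con$\bot$}, and it needs $\alpha,\bot\in\Sigma$, which I have. Then $y\open z$ and $z\in|\alpha|$.

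For (3), the ($\subseteq$) direction applies Lemma~\ref{lem:cond-exist-fin}(2) verbatim to $x,y\in W$, producing $z\in W$ with $y\open z$ and $\alpha,\beta\in\Gamma_z$. For ($\supseteq$), I take $x$ with $\alpha\to\beta\notin\Gamma_x$. Since $\Gamma_x$ is $\Sigma$-closed, Lemma~\ref{lem:cond-exist-fin}(1) gives $y\in W$ with $y\open x$, $\alpha\in\Gamma_y$, and $\alpha\wedge\beta\in\Delta_y$. For any $z$ with $y\open z$ and $z\in|\alpha|\cap|\beta|$, the companion-closure clause (iii) puts $\alpha\wedge\beta\in\Sigma$, so $\Sigma$-closure gives $\alpha\wedge\beta\in\Gamma_z\cap\Delta_y$, a contradiction. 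For (5), I argue as in Lemma~\ref{lem:fixpoint}: if $\gamma\notin\Gamma_x$, Lemma~\ref{lem:fix-exist-fin} yields $y\in W$ with $y\open x$ and $\gamma\in\Delta_y$, so no $z$ with $y\open z$ contains $\gamma$. Hence $x\notin c_\open(|\gamma|)$.

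The main obstacle I expect is bookkeeping: making sure that every formula whose membership is asserted (notably $\alpha\wedge\beta$ in (3), and $\top\to\gamma$ and $\alpha\to\bot$ as generators) either lies in $\Sigma$ or is used only as a generator of ${\downarrow_\Sigma}$. Note that $W^\Sigma_b$ only requires its defining $\alpha,\beta$ to be in $\Sigma$, and $\top,\bot,\gamma\in\Sigma$. I also need the existence witnesses to land in $W^\Sigma_b$ itself rather than merely in $W^\Sigma_d$, which the $\Sigma$-existence lemmas provide.
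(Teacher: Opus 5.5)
Your proposal is correct and follows the paper's proof. Items (1)--(4) are the $\Sigma$-restrictions of Lemma~\ref{lem:truth-sets}, with the conditional witnesses taken from Lemma~\ref{lem:cond-exist-fin}, and (5) repeats Lemma~\ref{lem:fixpoint} using Lemma~\ref{lem:fix-exist-fin}; your explicit disjunction witness $(\Th_\Sigma(\alpha),{\downarrow_\Sigma}\{\neg\alpha\})\in W^\Sigma_b$ in (4) usefully spells out a step that the paper's terse proof attributes to Lemma~\ref{lem:cond-exist-fin}, though it is really the $\Sigma$-version of the witness from Lemma~\ref{lem:truth-sets}(5).
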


\begin{proof}
Items (1)--(4) are the restrictions to $W^{\Sigma}_b$ of the corresponding
items of Lemma~\ref{lem:truth-sets}.  The required witnesses are supplied
by Lemma~\ref{lem:cond-exist-fin}; all applications of theory, predecessor,
and join closure are legitimate in their $\Sigma$-restricted forms, since
the hypotheses of (2)--(4), together with subformula and companion closure,
place the relevant formulas in $\Sigma$.  Item (5) is the same proof as
Lemma~\ref{lem:fixpoint}, using Lemma~\ref{lem:fix-exist-fin}.
\end{proof}

\begin{theorem}[Finite canonical model]\label{thm:fmp-canonical}
Let $\vdash\, \in \{\vT, \vF\}$ and let $\mathcal{M}^{\Sigma} =
(W^{\Sigma}_b, \open, V)$ with $V(p) = |p|_{W^{\Sigma}_b}$ for $p \in
\Sigma$ and $V(p) = \varnothing$ otherwise. Then $\mathcal{M}^{\Sigma}$ is
a finite model; $\true{\gamma}_{\mathcal{M}^{\Sigma}} =
|\gamma|_{W^{\Sigma}_b}$ for all $\gamma \in \Sigma$; the frame is
reflexive; and if $\vdash\, = \,\vF$, it is an $\SysF$-frame.
\end{theorem}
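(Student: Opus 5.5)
The proof mirrors Theorem~\ref{thm:canonical}, with $W := W^{\Sigma}_b$ in place of $W_a \cup W_b$ and Lemma~\ref{lem:truth-sets-fin} in place of Lemmas~\ref{lem:truth-sets} and~\ref{lem:fixpoint}.

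\emph{Finiteness and admissibility.} Finiteness is immediate: $\Sigma$ is finite by Lemma~\ref{lem:Sigma-finite}, and $W^{\Sigma}_b \subseteq W^{\Sigma}_d$ by Lemma~\ref{lem:Wb-in-Wd-fin}, which has at most $4^{|\Sigma|}$ elements. Nonemptiness needs a witness in $W^{\Sigma}_b$. I will take $(\Th_\Sigma(\top),{\downarrow_\Sigma}\{\top\to\bot\})$, which lies in $W^{\Sigma}_b$ because $\top\nvdash\top\to\bot$. Indeed, $\top\vdash\top\to\bot$ would give $\vdash\bot$ by \rn{$\top\to$E}, contradicting soundness on any one-point reflexive frame. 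For admissibility, $V(p)=|p|_W$ with $p\in\Sigma$ is a proposition by Lemma~\ref{lem:truth-sets-fin}(5). For $p\notin\Sigma$, $V(p)=\varnothing$, and $\varnothing = c_{\open}(\varnothing)$ because the frame is reflexive (shown below), so every state has a state open to it.

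\emph{Reflexivity and pseudo-symmetry.} Every $x\in W^{\Sigma}_b$ lies in $W^{\Sigma}_d$, so $\Gamma_x\cap\Delta_x=\varnothing$, i.e.\ $x\open x$. For $\vdash=\vF$, let $y\open x$ in $W$. Lemma~\ref{lem:ps-exist-fin} supplies $z\in W^{\Sigma}_b$ with $z\open y$ and $z\preref x$. This is exactly pseudo-symmetry, since both points in that lemma are assumed to lie in $W^{\Sigma}_b$.

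\emph{Truth lemma.} I will prove $\true{\gamma}=|\gamma|_W$ for $\gamma\in\Sigma$ by induction on $\gamma$, using subformula closure of $\Sigma$ so that the hypothesis applies to immediate subformulas.
\begin{itemize}
\item Atoms hold by definition of $V$.
\item For $\top$: $\vdash\top$ and $\Sigma$-closure give $|\top|_W=W$.
\item For $\bot$: $|\bot|_W=\varnothing=\Box\varnothing$ by item (1) and reflexivity.
\item For $\wedge$ and $\to$: use items (2) and (3) of Lemma~\ref{lem:truth-sets-fin}.
\item For $\vee$: use the sandwich argument of Theorem~\ref{thm:canonical}(2). By item (4), monotonicity of $c_{\open}$, and item (5) applied to $\alpha\vee\beta\in\Sigma$,
\[
c_{\open}(|\alpha|\cup|\beta|)\subseteq c_{\open}(|\alpha\vee\beta|)=|\alpha\vee\beta|\subseteq c_{\open}(|\alpha|\cup|\beta|).
\]
\end{itemize}

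\emph{Main obstacle.} The real work lies in Lemma~\ref{lem:truth-sets-fin}, which may be assumed. The only points needing care here are these. First, the witnesses produced by the existence lemmas must land in $W^{\Sigma}_b$ rather than merely in $W^{\Sigma}_d$; the lemmas as stated guarantee this. Second, pseudo-symmetry is only available between $W^{\Sigma}_b$-points, which is exactly our carrier. Third, atoms outside $\Sigma$ must be handled, which the empty valuation together with reflexivity does.
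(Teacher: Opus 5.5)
Your proposal is correct and follows the paper's proof: you get admissibility from Lemma~\ref{lem:truth-sets-fin}(5), together with reflexivity for atoms outside $\Sigma$, reflexivity from $\Gamma_x\cap\Delta_x=\varnothing$, pseudo-symmetry from Lemma~\ref{lem:ps-exist-fin}, and the truth lemma by the same induction (including the $\vee$ sandwich) as in Theorem~\ref{thm:canonical}(2). You also correctly make explicit two points the paper leaves tacit: the $\top$ case, which Lemma~\ref{lem:truth-sets-fin} does not list, and the nonemptiness of $W^{\Sigma}_b$ via the witness $(\Th_\Sigma(\top),{\downarrow_\Sigma}\{\top\to\bot\})$.
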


\begin{proof}
Admissibility of $V$: for $p \in \Sigma$ by
Lemma~\ref{lem:truth-sets-fin}(5); $\varnothing$ is a proposition on a
reflexive frame, since there are no absurd states. Reflexivity: for $x \in
W^{\Sigma}_b$, $\Gamma_x \cap \Delta_x = \varnothing$
(Lemma~\ref{lem:Wb-in-Wd-fin}), so $x \open x$. The truth lemma for $\gamma
\in \Sigma$ is by induction on $\gamma$ exactly as in
Theorem~\ref{thm:canonical}(2), using Lemma~\ref{lem:truth-sets-fin} and
subformula closure of $\Sigma$. Pseudo-symmetry for $\vdash\, = \,\vF$ is
Lemma~\ref{lem:ps-exist-fin}, whose witness lies in $W^{\Sigma}_b$.
\end{proof}

\begin{theorem}[Finite model property]\label{thm:fmp}
Let $\SysS \in \{\SysT, \SysF\}$ and $\varphi_0, \psi_0 \in \Fm$. Then
$\varphi_0 \vdash_{\SysS} \psi_0$ iff $\varphi_0 \vDash \psi_0$ over all
\emph{finite} models in $\mathsf{C}_{\SysS}$. Moreover, if $\varphi_0
\nvdash_{\SysS} \psi_0$, there is a countermodel with at most
$4^{|\Sigma|}$ states, where $|\Sigma| \leq 2\,|{\SF(\varphi_0) \cup
\SF(\psi_0)}| + 4$.
\end{theorem}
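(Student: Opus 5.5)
The left-to-right direction is immediate from Theorem~\ref{thm:soundness}: if $\varphi_0 \vdash_{\SysS} \psi_0$, then $\varphi_0 \vDash \psi_0$ over all models in $\mathsf{C}_{\SysS}$, in particular over the finite ones. For the converse, argue contrapositively. Assume $\varphi_0 \nvdash_{\SysS} \psi_0$, build $\Sigma$ from $\varphi_0,\psi_0$ as in Definition~\ref{def:Sigma}, and take the finite model $\mathcal{M}^{\Sigma}$ of Theorem~\ref{thm:fmp-canonical}. That theorem already tells us $\mathcal{M}^{\Sigma}$ is a finite model over a reflexive frame (an $\SysF$-frame when $\SysS=\SysF$), so it lies in $\mathsf{C}_{\SysS}$. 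It also gives the truth lemma $\true{\gamma}=|\gamma|_{W^{\Sigma}_b}$ for every $\gamma\in\Sigma$, and in particular for $\varphi_0$ and $\psi_0$. Its carrier $W^{\Sigma}_b\subseteq W^{\Sigma}_d$ has at most $4^{|\Sigma|}$ elements, since every state is a pair of subsets of $\Sigma$. The bound on $|\Sigma|$ is Lemma~\ref{lem:Sigma-finite}.

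It remains to find a state $x\in W^{\Sigma}_b$ with $\varphi_0\in\Gamma_x$ and $\psi_0\notin\Gamma_x$. The generic witnesses have the form $(\Th_\Sigma(\alpha),{\downarrow_\Sigma}\{\alpha\to\beta\})$ with $\alpha,\beta\in\Sigma$, so the natural choice is $\alpha:=\varphi_0$ and $\beta:=\psi_0$, giving $x=(\Th_\Sigma(\varphi_0),{\downarrow_\Sigma}\{\varphi_0\to\psi_0\})$. Two points need checking. First, membership: we need $\varphi_0\nvdash\varphi_0\to\psi_0$. This follows from the argument of Lemma~\ref{lem:cond-exist}(1). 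If $\varphi_0\vdash\varphi_0\to\psi_0$, then \rn{DT$_0$}, \rn{Imp} and \rn{Rep} give $\vdash\varphi_0\to\psi_0$. Together with $\varphi_0\vdash\varphi_0\to\psi_0$ this does not yet yield $\varphi_0\vdash\psi_0$, since modus ponens fails, so this route needs more care.

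This is the main obstacle, and it requires a different witness. Take instead the state $x=(\Th_\Sigma(\varphi_0),{\downarrow_\Sigma}\{\neg\varphi_0\})$, which has the Lemma~\ref{lem:cond-exist}(2) shape with $\alpha=\varphi_0$, $\beta=\bot$-type data. It belongs to $W^\Sigma_b$ provided $\varphi_0\nvdash\bot$, which holds because otherwise $\varphi_0\vdash\psi_0$; \rn{Con$\bot$} then gives $\varphi_0\nvdash\neg\varphi_0$. We also need $\neg\varphi_0=\varphi_0\to\bot\in\Sigma$ for the pair to be in the official form, but only $\varphi_0,\bot\in\Sigma$ is actually needed, because ${\downarrow_\Sigma}$ only collects $\Sigma$-formulas and Lemma~\ref{lem:Wb-in-Wd-fin} uses only $\alpha,\beta\in\Sigma$. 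Here $\beta$ is $\bot$ with $\alpha\to\bot$ as the displayed conditional, exactly as in Lemma~\ref{lem:ps-exist-fin}.

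With this witness, $\varphi_0\in\Gamma_x$ by \rn{A}, and $\psi_0\notin\Gamma_x$ because $\Gamma_x=\Th_\Sigma(\varphi_0)$ and $\varphi_0\nvdash\psi_0$. The truth lemma then gives $\mathcal{M}^\Sigma,x\vDash\varphi_0$ and $\mathcal{M}^\Sigma,x\nvDash\psi_0$. Hence $\varphi_0\nvDash\psi_0$ on a finite model in $\mathsf{C}_{\SysS}$ with at most $4^{|\Sigma|}$ states. Decidability follows in the usual way, since $\Sigma$, and hence the finite search space of candidate frames and valuations, is computable from $\varphi_0,\psi_0$.
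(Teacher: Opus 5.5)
Your final argument is correct and is the paper's own proof. From $\varphi_0\nvdash\bot$ and \rn{Con$\bot$}, the state $(\Th_{\Sigma}(\varphi_0),{\downarrow_{\Sigma}}\{\neg\varphi_0\})$ lies in $W^{\Sigma}_b$, and Theorem~\ref{thm:fmp-canonical} together with the size bounds finishes the proof. The definition of $W^{\Sigma}_b$ only requires $\varphi_0,\bot\in\Sigma$, so $\neg\varphi_0\in\Sigma$ was never at issue.

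You were right to drop the first witness, because it genuinely fails in some cases. Take $\varphi_0=p\wedge(p\to q)$ and $\psi_0=q$. Then \rn{CAS$'$} gives $\varphi_0\vdash\varphi_0\to\psi_0$ already in $\SysK$, yet $\varphi_0\nvdash_{\SysF}\psi_0$, since modus ponens fails for the Sasaki hook on non-orthomodular ortholattices (Remark~\ref{rem:failures}).
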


\begin{proof}
Left to right is Theorem~\ref{thm:soundness}. Conversely, suppose
$\varphi_0 \nvdash \psi_0$. Then $\varphi_0 \nvdash \bot$ (else \rn{$\bot$E}
and \rn{Cut} give $\varphi_0 \vdash \psi_0$), so $\varphi_0 \nvdash
\neg\varphi_0$ by \rn{Con$\bot$}, and $x_0 := (\Th_{\Sigma}(\varphi_0),
{\downarrow_{\Sigma}}\{\neg\varphi_0\}) \in W^{\Sigma}_b$. We have
$\varphi_0 \in \Gamma_{x_0}$ and $\psi_0 \notin \Gamma_{x_0}$ (as $\psi_0
\in \Sigma$ and $\varphi_0 \nvdash \psi_0$). By
Theorem~\ref{thm:fmp-canonical}, $\mathcal{M}^{\Sigma}, x_0 \vDash
\varphi_0$ and $\mathcal{M}^{\Sigma}, x_0 \nvDash \psi_0$, and
$\mathcal{M}^{\Sigma} \in \mathsf{C}_{\SysS}$ is finite of the stated size.
\end{proof}

\begin{corollary}[Decidability]\label{cor:decidable}
For $\SysS \in \{\SysT, \SysF\}$, the relation $\{(\varphi, \psi) \mid
\varphi \vdash_{\SysS} \psi\}$ is decidable.
\end{corollary}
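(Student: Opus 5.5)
The plan is to reduce decidability to the finite model property of Theorem~\ref{thm:fmp}, combined with the recursive enumerability of $\vdash_{\SysS}$, in the standard Harrop-style way. I would also give a direct procedure that avoids enumerating proofs altogether.

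\emph{Semi-decision of derivability.} The rules of Definitions~\ref{def:K} and~\ref{def:TF} form a recursive set of finitary rule schemes. The side conditions of \rn{Con}, \rn{Imp}, and \rn{Rep} are themselves derivability statements, but they can be treated as earlier derivations in a tree, using finitarity (Remark~\ref{rem:mon-fin}). Hence the set of pairs $(\varphi,\psi)$ with $\varphi\vdash_{\SysS}\psi$ is recursively enumerable: one enumerates all finite derivation trees over finite premise sets.

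\emph{Semi-decision of non-derivability.} By Theorem~\ref{thm:fmp}, $\varphi\nvdash_{\SysS}\psi$ holds iff there is a finite model in $\mathsf{C}_{\SysS}$ with at most $4^{|\Sigma|}$ states that refutes the pair. Finite models can be effectively enumerated up to this bound. For each frame $(X,\open)$ on $\{1,\dots,n\}$, reflexivity is checked directly, and pseudo-symmetry (for $\SysF$) is a first-order condition over finitely many states, with $\preref$ computable. For each valuation of the finitely many variables occurring in $\varphi,\psi$, we check admissibility by computing $c_{\open}$. We then compute $\true{\varphi}$ and $\true{\psi}$ recursively via Notation~\ref{not:operators}, each step being a finite set computation, and test whether $\true{\varphi}\not\subseteq\true{\psi}$. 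Since the search is bounded, this already decides non-derivability outright. By soundness (Theorem~\ref{thm:soundness}), finding a refuting model is sound evidence of non-derivability; by Theorem~\ref{thm:fmp}, failing to find one within the bound means $\varphi\vdash_{\SysS}\psi$.

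The bounded search alone therefore gives a decision procedure: enumerate all models of size at most $4^{|\Sigma|}$ and answer ``derivable'' iff none refutes $\varphi\vdash\psi$. The r.e.\ half is redundant but reassuring. The only point needing care is that the bound from Theorem~\ref{thm:fmp} is computable from $\varphi,\psi$, which it is via Lemma~\ref{lem:Sigma-finite}. Valuations of variables outside $\varphi,\psi$ are irrelevant to their truth sets, and can be set to $\varnothing$, which is admissible on reflexive frames. I expect no real obstacle here; the main subtle step is confirming that the frame conditions and admissibility are effectively checkable on finite frames, which is immediate from their definitions.
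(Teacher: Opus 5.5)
Your proposal is correct and follows the paper's route: derivability is recursively enumerable, and by Theorem~\ref{thm:fmp} non-derivability is witnessed by a finite countermodel in $\mathsf{C}_{\SysS}$, on which reflexivity, pseudo-symmetry, admissibility and the truth sets are all effectively checkable. Your further remark is also correct and slightly sharper than the paper's argument, which uses only the existence of finite countermodels. Because the size bound $4^{|\Sigma|}$ of Theorem~\ref{thm:fmp} is computable from $\varphi,\psi$, the bounded search is a decision procedure on its own, and the r.e.\ half can be dropped.
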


\begin{proof}
The relation is recursively enumerable, since proofs are finite objects
(Remark~\ref{rem:mon-fin}). Its complement is also recursively enumerable:
by Theorem~\ref{thm:fmp} it suffices to search for a finite countermodel,
and given a finite structure $(X, \open, V)$ one can effectively check
reflexivity, pseudo-symmetry, admissibility of $V$, and the truth of
$\varphi$ and falsity of $\psi$ at some state.
\end{proof}

\begin{remark}
For fundamental logic itself, Aguilera and Byd\v{z}ovsk\'{y} \cite{AB2023}
established decidability in polynomial time via cut-free sequent calculi.
Whether $\vT$ or $\vF$ admits an analogous calculus and complexity bound is
an interesting open question; the bound in Theorem~\ref{thm:fmp} yields
only a (weak) exponential-space upper bound.
\end{remark}


\section{Interlude: factoring conditions}\label{sec:strongps}

In the remaining two sections we extend the two main translation theorems
of \cite{HM2026} to the language with the preconditional: we prove that
the GMT-style adaptation whose preconditional clause boxes the
\emph{Sasaki hook} fully and faithfully embeds $\sysF$ into ortho-S4
(Section \ref{sec:os4}), and that the Goldblatt-style translation whose
preconditional clause is the intuitionistic-modal continuation of the
Goldblatt image of that hook fully and faithfully embeds $\sysF$ into
intuitionistic KTB (Section \ref{sec:itb}).

The division of labour is as follows.
\begin{enumerate}
\item In each case, the transformation from target models to fundamental
models is, at the level of frames, exactly the \emph{fundamental reduct}
of \cite[\S\S~3.2, 4.2]{HM2026}: the openness relation is the composite
$\lhd \,=\, \rho\circ{\le}$ of the symmetric relation $\rho$ of the target
frame with its preorder $\le$.  The additional calculation required for the
present language is the transfer of the semantic clause of the preconditional,
\[
\to_{\lhd}(A,B)\;=\;\Box_{\lhd}\bigl(-A\cup\Diamond_{\rhd}(A\cap B)\bigr),
\]
which factors through the two decompositions of $\lhd$ to yield precisely
the \emph{boxed Sasaki hook} on the ortho-side and the strict clause
representing its Goldblatt image on the intuitionistic side
(Theorems~\ref{thm:F1} and~\ref{thm:F2}).
\item In the converse direction, instead of the general modal-companion
constructions of \cite[\S\S~3.3, 4.3]{HM2026}, we use same-carrier
constructions $G_1, G_2$ that are available when the fundamental frame
satisfies certain \emph{factoring} conditions
(Definition \ref{def:strong-ps}).  Under these conditions the identities
$F_1(G_1(\mathcal N))=\mathcal N$ and $F_2(G_2(\mathcal N))=\mathcal N$
hold \emph{on the nose}, so the semantic clause for the preconditional is
inherited from the forward direction at no extra cost
(Theorems~\ref{thm:G1} and~\ref{thm:G2}).
\item The canonical model of Section \ref{sec:canonical}, taken over the
carrier $W_a\cup W_b\cup W_e$, satisfies the required factoring conditions (Theorem \ref{thm:canonical-strongps}).  This is the
technical heart of the faithfulness proofs: it shows that the canonical
construction driven by the preconditional---rather than by a primitive
negation---still supplies enough symmetry for the round trip through the
modal companions.
\item The main theorems are then assembled from soundness and
completeness on both sides (Theorems~\ref{thm:main-os4}
and~\ref{thm:main-itb}).
\end{enumerate}

The present section collects the frame-level preliminaries: a small
toolbox of operator identities (\S\ref{subsec:toolbox}), the factoring conditions and their relation-algebraic characterizations
(\S\ref{subsec:strongps}), and the proof that the canonical model
satisfies them (\S\ref{subsec:canonical-strongps}).

\subsection{Operators on relational frames}\label{subsec:toolbox}

Throughout, $X$ is a nonempty set and $-U:=X\setminus U$ for $U\subseteq X$.
For a binary relation $Q$ on $X$, written infix, we use the
\emph{predecessor-style} modal operators
\[
\Box_{Q}U \;=\;\{x\in X\mid \forall y\,(y\mathrel{Q}x\Rightarrow y\in U)\},
\qquad
\Diamond_{Q}U \;=\;\{x\in X\mid \exists y\,(y\mathrel{Q}x \text{ and } y\in U)\},
\]
we write $Q^{-1}$ for the converse of $Q$, and we compose relations by
\[
y\,(Q\circ R)\,x \iff \exists z\,(y\mathrel{Q}z \text{ and } z\mathrel{R}x).
\]
Recall from Section \ref{sec:relational} that for a relational frame
$(X,\lhd)$ we write $\rhd:={\lhd}^{-1}$,
\[
c_{\lhd}\;=\;\Box_{\lhd}\Diamond_{\rhd},\qquad
\neg_{\lhd}U\;=\;\Box_{\lhd}(-U),\qquad
\to_{\lhd}(A,B)\;=\;\Box_{\lhd}\bigl(-A\cup\Diamond_{\rhd}(A\cap B)\bigr),
\]
that the \emph{propositions} of $(X,\lhd)$ are the fixpoints of the
closure operator $c_{\lhd}$, and that the semantic clauses of
Section~\ref{sec:relational} read
\[
\den{\alpha\wedge\beta}=\den{\alpha}\cap\den{\beta},\qquad
\den{\alpha\vee\beta}=c_{\lhd}(\den{\alpha}\cup\den{\beta}),\qquad
\den{\alpha\to\beta}=\to_{\lhd}(\den{\alpha},\den{\beta}),
\]
with $\den{\bot}$ the set of absurd states (which is empty over reflexive
frames) and $\den{p}=V(p)\in\FP(c_{\lhd})$.
Recall also the \emph{prerefinement} preorder: $y\pre x$ iff every
$z\lhd y$ satisfies $z\lhd x$.  We additionally need its mirror image, the
\emph{postrefinement} preorder:
\[
y\post x \iff \forall z\,(y\lhd z\Rightarrow x\lhd z),
\]
i.e., prerefinement computed in the converse frame $(X,\rhd)$.  Two facts
are immediate from the definitions and will be used without mention:
\[
y\pre x \text{ and } z\lhd y \;\Longrightarrow\; z\lhd x,
\qquad\qquad
y\post x \text{ and } y\lhd z \;\Longrightarrow\; x\lhd z.
\]

\begin{lemma}[Toolbox]\label{lem:toolbox}
Let $Q,Q',R$ be binary relations on $X$ and $U,V\subseteq X$.
\begin{enumerate}
\item[(a)] $\Box_{Q\circ R}=\Box_{R}\Box_{Q}$ and
$\Diamond_{Q\circ R}=\Diamond_{R}\Diamond_{Q}$.
\item[(b)] If $Q\subseteq Q'$, then $\Box_{Q'}U\subseteq\Box_{Q}U$ and
$\Diamond_{Q}U\subseteq\Diamond_{Q'}U$.  Conversely, if
$\Box_{Q'}Z\subseteq\Box_{Q}Z$ for \emph{all} $Z\subseteq X$, then
$Q\subseteq Q'$.
\item[(c)] If $Q$ is reflexive, then $\Box_{Q}U\subseteq U\subseteq\Diamond_{Q}U$.
\item[(d)] If $Q$ is a preorder, then $\Box_{Q}\Box_{Q}=\Box_{Q}$,
$\Diamond_{Q}\Diamond_{Q}=\Diamond_{Q}$, and
$\Diamond_{Q^{-1}}\Box_{Q}=\Box_{Q}$.
\item[(e)] If $Q$ is symmetric, then $U\subseteq\Box_{Q}\Diamond_{Q}U$ and
$\Box_{Q}\Diamond_{Q}\Box_{Q}=\Box_{Q}$.
\item[(f)] $\Box_{Q}(U\cap V)=\Box_{Q}U\cap\Box_{Q}V$ and
$\Diamond_{Q}(U\cup V)=\Diamond_{Q}U\cup\Diamond_{Q}V$; more generally,
$\Box_{Q}$ distributes over arbitrary intersections and $\Diamond_{Q}$
over arbitrary unions.
\item[(g)] For any relational frame $(X,\lhd)$, the operator
$c_{\lhd}=\Box_{\lhd}\Diamond_{\rhd}$ is a closure operator, and
\[
\FP(c_{\lhd})\;=\;\Box_{\lhd}[\wp(X)]\;=\;\{\Box_{\lhd}Z\mid Z\subseteq X\}.
\]
\end{enumerate}
\end{lemma}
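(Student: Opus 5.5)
The plan is to verify each item directly from the definitions of $\Box_Q$, $\Diamond_Q$ and relational composition, treating (a)--(f) as routine unfoldings and concentrating the real work on (g).

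For (a), $x\in\Box_{Q\circ R}U$ iff for all $y,z$ with $y\mathrel{Q}z$ and $z\mathrel{R}x$ we have $y\in U$. This is exactly $\forall z\,(z\mathrel{R}x\Rightarrow z\in\Box_Q U)$, i.e.\ $x\in\Box_R\Box_Q U$. The diamond case is dual, or follows via $\Diamond_Q=-\Box_Q-$.

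For (b), the inclusion half is immediate, since fewer predecessors make a box larger and a diamond smaller. For the converse, suppose $y\mathrel{Q}x$. Take $Z:=\{w\mid w\mathrel{Q'}x\}$, so that $x\in\Box_{Q'}Z$. Then $x\in\Box_Q Z$, hence $y\in Z$, i.e.\ $y\mathrel{Q'}x$.

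Item (c) is reflexivity applied at $x$ itself.

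For (d), idempotence follows from (a) together with ${\le}\circ{\le}={\le}$ (transitivity plus reflexivity). For $\Diamond_{Q^{-1}}\Box_Q=\Box_Q$:
\begin{itemize}
\item ``$\supseteq$'' is (c) applied to $Q^{-1}$, which is reflexive.
\item ``$\subseteq$'': if $x\mathrel{Q}z$ with $z\in\Box_Q U$, then any $y\mathrel{Q}x$ satisfies $y\mathrel{Q}z$ by transitivity, so $y\in U$.
\end{itemize}

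For (e), given $x\in U$ and $y\mathrel{Q}x$, symmetry gives $x\mathrel{Q}y$, witnessing $y\in\Diamond_Q U$. For the second identity:
\begin{itemize}
\item ``$\supseteq$'' is the first part of (e) applied to $\Box_Q U$.
\item ``$\subseteq$'' follows from $\Diamond_Q\Box_Q U\subseteq U$, which holds because $y\in\Box_Q U$ and $x\mathrel{Q}y$ (by symmetry) give $x\in U$. Monotonicity of $\Box_Q$ then finishes it.
\end{itemize}

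Item (f) is commutation of a universal quantifier with conjunction, and of an existential quantifier with disjunction.

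For (g), I would use the Galois connection $\Diamond_{\rhd}U\subseteq Z\iff U\subseteq\Box_{\lhd}Z$. This holds because $\Diamond_{\rhd}U=\{z\mid\exists u\in U,\ z\lhd u\}$ and $\Box_{\lhd}Z$ quantifies over $z\lhd u$.

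From the adjunction, $c_{\lhd}=\Box_{\lhd}\Diamond_{\rhd}$ is monotone and inflationary, and $\Box_{\lhd}\Diamond_{\rhd}\Box_{\lhd}=\Box_{\lhd}$ holds (the standard triangle identity). Idempotence of $c_{\lhd}$ follows, so $c_{\lhd}$ is a closure operator.

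Every fixpoint has the form $\Box_{\lhd}(\Diamond_{\rhd}U)$. Conversely, $c_{\lhd}(\Box_{\lhd}Z)=\Box_{\lhd}Z$ by the triangle identity. So $\FP(c_{\lhd})=\Box_{\lhd}[\wp(X)]$.

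The only mildly delicate step is getting the directions right in the predecessor-style convention for this adjunction; everything else is mechanical.
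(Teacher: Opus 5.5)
Your proof is correct and follows the paper's route. Items (a)--(f) are direct unfoldings of the quantifiers, and (g) uses the adjunction $\Diamond_{\rhd}\dashv\Box_{\lhd}$; your explicit triangle-identity argument just spells out what the paper cites as standard Galois-connection facts. The differences are cosmetic: in the converse of (b) you use the direct witness $Z=\{w\mid w\mathrel{Q'}x\}$ where the paper argues contrapositively with $Z=X\setminus\{y\}$, and in (d) you write $\le$ where you mean $Q$.
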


\begin{proof}
For (a), (b), and (f), expand the defining quantifiers; the converse in
(b) follows by taking $Z=X\setminus\{y\}$ if $yQx$ but not $yQ'x$.
For (c)--(e), use the reflexive edge $xQx$, transitivity for the two
idempotence claims in (d), and symmetry to reverse the edge used to witness
the two assertions in (e).

For (g), $\Diamond_{\rhd}$ is left adjoint to $\Box_{\lhd}$:
\[
\Diamond_{\rhd}A\subseteq B
\quad\Longleftrightarrow\quad
A\subseteq\Box_{\lhd}B.
\]
Hence $c_{\lhd}=\Box_{\lhd}\Diamond_{\rhd}$ is the closure operator induced
by this Galois connection.  Its fixed points are exactly the elements in the
image of its right adjoint, i.e.,
$\FP(c_{\lhd})=\Box_{\lhd}[\wp(X)]$; see also
\cite[Lemma 2.9, Prop.\ 2.10]{HM2026}.
\end{proof}

The reflexivity and prefactoring argument below is the common relational core
of \cite[Lemmas~3.5 and~4.7]{HM2026}.  We record it in an assumption-minimal
form, together with the operator- and proposition-level calculations needed
for the conditional, so that one lemma applies to both reducts.

\begin{lemma}[Reduct frames]\label{lem:reduct}
Let $\le$ be a preorder on $X$, let $\rho$ be a reflexive symmetric
relation on $X$, and let $\lhd:=\rho\circ{\le}$, i.e.,
\[
y\lhd x \iff \exists u\,(y\mathrel{\rho}u \text{ and } u\le x).
\]
Then:
\begin{enumerate}
\item $\lhd$ is reflexive;
\item $\Box_{\lhd}=\Box_{\le}\Box_{\rho}$ and
$\Diamond_{\rhd}=\Diamond_{\rho}\Diamond_{\le^{-1}}$;
\item $(X,\lhd)$ is prefactoring in the sense of
Definition \ref{def:strong-ps} below; in particular, it is
pseudo-symmetric.
\item
\[
\FP(c_{\lhd})
=\Box_{\le}\Box_{\rho}[\wp(X)]
=\Box_{\le}[\FP(c_{\rho})],
\]
and every $A\in\FP(c_{\lhd})$ satisfies $\Box_{\le}A=A$.
\item If $\Box_{\le}U=U$, then
\[
\Diamond_{\le^{-1}}U=U
\qquad\text{and}\qquad
\Diamond_{\rhd}U=\Diamond_{\rho}U.
\]
\item For all $A,B\in\FP(c_{\lhd})$,
\[
\Diamond_{\rhd}(A\cap B)=\Diamond_{\rho}(A\cap B),\qquad
\Diamond_{\rhd}(A\cup B)=\Diamond_{\rho}(A\cup B),
\]
\[
c_{\lhd}(A\cap B)=\Box_{\le}c_{\rho}(A\cap B),\qquad
c_{\lhd}(A\cup B)=\Box_{\le}c_{\rho}(A\cup B),
\]
and
\[
\to_{\lhd}(A,B)
=\Box_{\le}\Box_{\rho}\bigl(-A\cup\Diamond_{\rho}(A\cap B)\bigr).
\]
\end{enumerate}
\end{lemma}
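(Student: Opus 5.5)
The plan is to verify the six items in order, each by unfolding $\lhd=\rho\circ{\le}$ and applying the Toolbox (Lemma~\ref{lem:toolbox}). I only need reflexivity and symmetry of $\rho$ and reflexivity and transitivity of $\le$.

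\emph{Items 1 and 2.} For item~1, $x\mathrel{\rho}x$ and $x\le x$ give $x\lhd x$. For item~2, Lemma~\ref{lem:toolbox}(a) gives $\Box_{\lhd}=\Box_{\rho\circ\le}=\Box_{\le}\Box_{\rho}$ directly. Since ${\rhd}=(\rho\circ{\le})^{-1}={\le^{-1}}\circ\rho^{-1}={\le^{-1}}\circ\rho$ by symmetry of $\rho$, (a) again gives $\Diamond_{\rhd}=\Diamond_{\rho}\Diamond_{\le^{-1}}$.

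\emph{Item 3.} I expect the prefactoring condition of Definition~\ref{def:strong-ps} to require, for each $y\lhd x$, a mediating state that prerefines $x$ and is open to $y$. My witness is the factoring point itself. Given $y\lhd x$, pick $u$ with $y\mathrel{\rho}u\le x$. Then:
\begin{itemize}
\item $u\preref x$: if $z\lhd u$, say $z\mathrel{\rho}w\le u$, then $w\le x$ by transitivity, so $z\lhd x$;
\item $u\lhd y$: from $u\mathrel{\rho}y$ (symmetry) and $y\le y$;
\item $y\lhd u$: from $y\mathrel{\rho}u$ and $u\le u$.
\end{itemize}
Pseudo-symmetry follows by taking $z:=u$. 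If the definition turns out to demand a slightly different factorization, I will reuse the same $u$. The main risk in the whole lemma is this mismatch with the not-yet-stated definition; the computations themselves are routine.

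\emph{Item 4.} Lemma~\ref{lem:toolbox}(g) and item~2 give $\FP(c_{\lhd})=\Box_{\lhd}[\wp(X)]=\Box_{\le}\Box_{\rho}[\wp(X)]$. Applying (g) to the frame $(X,\rho)$, where $\rho^{-1}=\rho$, gives $\Box_{\rho}[\wp(X)]=\FP(c_{\rho})$. Idempotence of $\Box_{\le}$ from (d) then gives $\Box_{\le}A=A$ for every such $A$.

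\emph{Item 5.} If $\Box_{\le}U=U$, then (d) yields $\Diamond_{\le^{-1}}U=\Diamond_{\le^{-1}}\Box_{\le}U=\Box_{\le}U=U$. Substituting into item~2 gives $\Diamond_{\rhd}U=\Diamond_{\rho}U$.

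\emph{Item 6.} First, $A\cap B$ is $\Box_{\le}$-fixed because $\Box_{\le}$ preserves intersections (f). The union is the delicate case, since $\Box_{\le}$ does not preserve unions. I handle it directly: $\Box_{\le}U=U$ just says $U$ is downward closed under $\le$, and a union of downsets is a downset. So item~5 applies to both $A\cap B$ and $A\cup B$, giving the two $\Diamond_{\rhd}$ identities. The $c_{\lhd}$ identities follow from $c_{\lhd}=\Box_{\le}\Box_{\rho}\Diamond_{\rhd}$ and $c_{\rho}=\Box_{\rho}\Diamond_{\rho}$. Finally,
\[
\to_{\lhd}(A,B)=\Box_{\le}\Box_{\rho}\bigl(-A\cup\Diamond_{\rhd}(A\cap B)\bigr),
\]
and replacing $\Diamond_{\rhd}(A\cap B)$ by $\Diamond_{\rho}(A\cap B)$ gives the stated formula.
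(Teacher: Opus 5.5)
Your proposal is correct and follows the paper's proof essentially step for step. It uses the same Toolbox citations for items 1, 2, 4 and 5, the same factoring point $u$ as the witness in item 3, and the same downset observation for item 6. In item 3, your two checks $u\lhd y$ and $y\lhd u$ give exactly $u\lhds y$, which together with $u\pre x$ is what prefactoring in Definition~\ref{def:strong-ps} requires, so the definitional risk you flagged does not arise. The one step you leave implicit is that $A$ and $B$ are themselves $\Box_{\le}$-fixed; this comes from item 4 and is needed before you invoke closure of downsets under $\cap$ and $\cup$.
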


\begin{proof}
(1) $x\mathrel{\rho}x$ and $x\le x$ give $x\lhd x$.

(2) The first identity is Lemma \ref{lem:toolbox}(a).  For the second,
observe that $\rhd=(\rho\circ{\le})^{-1}={\le^{-1}}\circ\rho^{-1}
={\le^{-1}}\circ\rho$ by symmetry of $\rho$, and apply
Lemma \ref{lem:toolbox}(a) again.

(3) Suppose $y\lhd x$, and pick $u$ with $y\mathrel{\rho}u$ and $u\le x$.
First, $u\lhds y$: indeed $u\lhd y$ since $u\mathrel{\rho}y$ (symmetry)
and $y\le y$, and $y\lhd u$ since $y\mathrel{\rho}u$ and $u\le u$.
Second, $u\pre x$: if $z\lhd u$, pick $v$ with $z\mathrel{\rho}v$ and
$v\le u$; then $v\le x$ by transitivity, so $z\lhd x$.  Thus $u$ is the
required witness.  Pseudo-symmetry follows since $u\lhd y$ and $u$
prerefines $x$.

(4) By Lemma \ref{lem:toolbox}(g) and (2),
\[
\FP(c_{\lhd})=\Box_{\lhd}[\wp(X)]
=\Box_{\le}\Box_{\rho}[\wp(X)]
=\Box_{\le}[\FP(c_{\rho})].
\]
If $A=\Box_{\le}\Box_{\rho}Z$, then
$\Box_{\le}A=A$ by Lemma \ref{lem:toolbox}(d).

(5) If $\Box_{\le}U=U$, Lemma \ref{lem:toolbox}(d) gives
$\Diamond_{\le^{-1}}U=U$; the second identity then follows from (2).

(6) The $\Box_{\le}$-fixpoints are the downsets of the preorder $\le$,
so they are closed under intersections and unions.  Hence (4) and (5)
give the two $\Diamond$ identities.  Using these identities together
with (2) in the definitions of $c_{\lhd}$ and $\to_{\lhd}$ gives the
remaining displayed formulas.
\end{proof}

\subsection{Factoring conditions}\label{subsec:strongps}

Recall that a frame $(X,\lhd)$ is pseudo-symmetric if for all $x$ and
$y\lhd x$ there is some $z\lhd y$ with $z\pre x$.  The following
conditions use the symmetric kernel as in \cite[Def.~3.8]{HM2026} and strengthen pseudo-symmetry by requiring the witness $z$ to be compatible with $y$, and possibly to refine $x$ in both directions at once.  For the latter we introduce
\[
y\prep x \;:\iff\; y\pre x \text{ and } x\post y ,
\]
a preorder (as an intersection of two preorders, cf.\ Lemma
\ref{lem:prep} below); when $y\prep x$ we say that $y$ \emph{strongly
refines} $x$.

\begin{definition}\label{def:strong-ps}
Let $(X,\lhd)$ be a relational frame and $\lhds:={\lhd}\cap{\rhd}$ the
symmetric kernel of $\lhd$.  We say that $(X,\lhd)$ is:
\begin{enumerate}
\item \emph{prefactoring} if for all $x\in X$ and
$y\lhd x$ there is $z$ with $z\lhds y$ and $z\pre x$;
\item \emph{strongly factoring} if for all $x\in X$ and
$y\lhd x$ there is $z$ with $z\lhds y$ and $z\prep x$;
\item \emph{postfactoring} if the converse frame $(X,\rhd)$ is prefactoring;
\item \emph{converse-strongly factoring} if the converse frame $(X,\rhd)$ is strongly factoring;
\item \emph{balanced} if it is both prefactoring
and postfactoring.
\end{enumerate}
\end{definition}

Unfolding the definitions (and noting that ${\rhd}^{s}={\lhds}$, that
prerefinement for $(X,\rhd)$ is postrefinement for $(X,\lhd)$, and that
strong refinement for $(X,\rhd)$ is the converse of strong refinement
for $(X,\lhd)$), the converse conditions read: $(X,\lhd)$ is postfactoring iff
\[
\forall x\,\forall y\ \bigl(x\lhd y \;\Rightarrow\; \exists z\,
(z\lhds y \text{ and } z\post x)\bigr),
\]
and converse-strongly factoring iff
\[
\forall x\,\forall y\ \bigl(x\lhd y \;\Rightarrow\; \exists z\,
(z\lhds y \text{ and } z\post x \text{ and } x\pre z)\bigr),
\qquad\text{i.e.,}\quad x\prep z .
\]
Clearly strongly factoring implies prefactoring, and converse-strongly factoring implies postfactoring. Moreover, prefactoring implies pseudo-symmetry, since $z\lhds y$ entails $z\lhd y$.

\begin{lemma}\label{lem:prep}
$\pre$, $\post$, and $\prep$ are preorders, and:
\begin{enumerate}
\item ${\lhds}\circ{\pre}\;\subseteq\;{\lhd}$ and
${\prep}\circ{\lhds}\;\subseteq\;{\lhd}$;
\item ${\lhds}\circ{\prep}\;\subseteq\;{\lhd}$ and
$({\post})^{-1}\circ{\lhds}\;\subseteq\;{\lhd}$.
\end{enumerate}
\end{lemma}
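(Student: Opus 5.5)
The plan is to unfold the definitions. Every claim comes down to two facts already noted in the text: $y\pre x$ together with $z\lhd y$ gives $z\lhd x$, and $y\post x$ together with $y\lhd z$ gives $x\lhd z$. We also use that $\lhds\subseteq\lhd$ and $\lhds\subseteq\rhd$. Throughout we use the composition convention $y\,(Q\circ R)\,x\iff\exists z\,(y\mathrel{Q}z\text{ and }z\mathrel{R}x)$.

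\emph{Preorders.} Write $\lhd^{-}(x)=\{z\mid z\lhd x\}$ and $\lhd^{+}(x)=\{z\mid x\lhd z\}$. Then $y\pre x$ says exactly $\lhd^{-}(y)\subseteq\lhd^{-}(x)$, and $y\post x$ says exactly $\lhd^{+}(y)\subseteq\lhd^{+}(x)$. Inclusion of sets is reflexive and transitive, so both relations are preorders. The converse $(\post)^{-1}$ is then also a preorder. By definition ${\prep}={\pre}\cap(\post)^{-1}$, since $y\prep x$ means $y\pre x$ and $x\post y$. An intersection of two preorders is a preorder.

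\emph{Item (1).} First take $y\,({\lhds}\circ{\pre})\,x$, so there is $z$ with $y\lhds z$ and $z\pre x$. Then $y\lhd z$, and $z\pre x$ gives $y\lhd x$.

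Next take $y\,({\prep}\circ{\lhds})\,x$, so there is $z$ with $y\prep z$ and $z\lhds x$. Here we use the second half of strong refinement, $z\post y$. Together with $z\lhd x$ (from $z\lhds x$), the postrefinement fact gives $y\lhd x$.

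\emph{Item (2).} The inclusion ${\lhds}\circ{\prep}\subseteq\lhd$ follows from the first inclusion of (1), because ${\prep}\subseteq{\pre}$.

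For the last inclusion, $y\,((\post)^{-1}\circ{\lhds})\,x$ gives $z$ with $z\post y$ and $z\lhds x$. In particular $z\lhd x$, so $y\lhd x$ by the postrefinement fact.

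None of these steps is a real obstacle. The only place care is needed is keeping the predecessor-style composition convention and the direction of $\post$ inside $\prep$ straight. In particular, the second inclusion in (1) uses the postrefinement half $z\post y$ of $y\prep z$, not the prerefinement half.
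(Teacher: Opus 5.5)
Your proof is correct and follows the paper's argument essentially step for step. Each inclusion comes from the same one-line prerefinement and postrefinement facts, and the first inclusion of (2) is reduced to (1) via ${\prep}\subseteq{\pre}$, exactly as in the paper; the only cosmetic difference is that you get the preorder claims by writing $\prep$ literally as ${\pre}\cap(\post)^{-1}$ and using neighbourhood inclusions, where the paper checks transitivity of $\prep$ directly.
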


\begin{proof}
Reflexivity and transitivity of $\pre$ and $\post$ are immediate, and
$\prep$ is an intersection-like combination of the two: if $z\prep y$ and
$y\prep x$, then $z\pre y\pre x$ gives $z\pre x$, and $x\post y\post z$
gives $x\post z$, so $z\prep x$.

(1) If $y\lhds z$ and $z\pre x$, then $y\lhd z$ and $z\pre x$, so
$y\lhd x$.  If $y\prep z$ and $z\lhds x$, then in particular
$z\post y$ and $z\lhd x$; hence $y\lhd x$ by the defining property of
postrefinement.

(2) The first inclusion follows from the first inclusion in (1) since
${\prep}\subseteq{\pre}$.  For the second, suppose $z\post y$ and
$z\lhds x$; then $z\lhd x$ and $z\post y$ give $y\lhd x$.
\end{proof}

\begin{proposition}[Relation-algebraic characterizations]\label{prop:strongps-char}
Let $(X,\lhd)$ be a relational frame.
\begin{enumerate}
\item $(X,\lhd)$ is prefactoring iff
${\lhd}={\lhds}\circ{\pre}$; in that case
\[
\Box_{\lhd}\;=\;\Box_{\pre}\,\Box_{\lhds}.
\]
\item $(X,\lhd)$ is strongly factoring iff
${\lhd}={\lhds}\circ{\prep}$; in that case
\[
\Box_{\lhd}\;=\;\Box_{\prep}\,\Box_{\lhds}
\qquad\text{and}\qquad
\Diamond_{\rhd}U\;=\;\Diamond_{\lhds}\bigl(\Diamond_{(\prep)^{-1}}U\bigr)
\ \text{ for all } U\subseteq X.
\]
\item $(X,\lhd)$ is postfactoring iff
${\lhd}=({\post})^{-1}\circ{\lhds}$; in that case
\[
\Box_{\lhd}\;=\;\Box_{\lhds}\,\Box_{({\post})^{-1}},
\qquad\text{where}\quad
\Box_{({\post})^{-1}}U=\{x\mid \forall y\,(x\post y\Rightarrow y\in U)\}.
\]
\end{enumerate}
\end{proposition}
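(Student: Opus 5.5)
Each part has the same shape: the equation between relations is proved by two inclusions, one of which always holds by Lemma~\ref{lem:prep}. The other inclusion is the factoring condition itself, read as an existence statement. Once the relational identity is known, the operator identities follow from Lemma~\ref{lem:toolbox}(a), which converts a composite relation into a composite of boxes (or diamonds) in the reversed order.

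\textbf{Part (1).} The inclusion ${\lhds}\circ{\pre}\subseteq{\lhd}$ is the first half of Lemma~\ref{lem:prep}(1) and holds in every frame. Now suppose the frame is prefactoring and $y\lhd x$. The definition provides $z$ with $y\lhds z$ (the kernel is symmetric) and $z\pre x$, which says exactly that $y\,({\lhds}\circ{\pre})\,x$. Conversely, suppose the two relations are equal and $y\lhd x$. Then there is $z$ with $y\lhds z$ and $z\pre x$, and this $z$ is the witness required by prefactoring. Applying Lemma~\ref{lem:toolbox}(a) to $Q={\lhds}$, $R={\pre}$ gives $\Box_{\lhd}=\Box_{\pre}\Box_{\lhds}$.

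\textbf{Part (2).} This runs the same way. The easy inclusion ${\lhds}\circ{\prep}\subseteq{\lhd}$ is the first half of Lemma~\ref{lem:prep}(2), and the strong factoring witness gives the reverse inclusion. The box identity then follows from Lemma~\ref{lem:toolbox}(a).

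For the diamond identity, take converses. Since $\lhds$ is symmetric, ${\rhd}=({\lhds}\circ{\prep})^{-1}=(\prep)^{-1}\circ{\lhds}$. Lemma~\ref{lem:toolbox}(a), applied with $Q=(\prep)^{-1}$ and $R={\lhds}$, then gives $\Diamond_{\rhd}U=\Diamond_{\lhds}\Diamond_{(\prep)^{-1}}U$. The one point needing care here is the order of composition under the paper's predecessor-style convention, and I would check it directly on elements. By definition, $x\in\Diamond_{\rhd}U$ iff there is $y\in U$ with $x\lhd y$. By the factorization this means there is $z$ with $x\lhds z$ and $z\prep y$. That is the same as $z\in\Diamond_{(\prep)^{-1}}U$ (using $y\,(\prep)^{-1}\,z$) together with $x\lhds z$, which is exactly $x\in\Diamond_{\lhds}(\Diamond_{(\prep)^{-1}}U)$.

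\textbf{Part (3).} The inclusion $(\post)^{-1}\circ{\lhds}\subseteq{\lhd}$ is the second half of Lemma~\ref{lem:prep}(2). For the reverse inclusion, use the unfolded form of postfactoring stated after Definition~\ref{def:strong-ps}. Given $y\lhd x$, instantiate that form with the roles of $x$ and $y$ swapped: it yields $z$ with $z\lhds x$ and $z\post y$. Then $y\,(\post)^{-1}\,z$ and $z\lhds x$, so $y\,((\post)^{-1}\circ{\lhds})\,x$. The converse direction reverses these steps. Lemma~\ref{lem:toolbox}(a) gives $\Box_{\lhd}=\Box_{\lhds}\Box_{(\post)^{-1}}$. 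Unfolding $\Box_{(\post)^{-1}}$ through $y\,(\post)^{-1}\,x\iff x\post y$ gives the displayed formula.

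I expect no real obstacle. The only place errors can creep in is bookkeeping of variable roles, namely the order of composition and the instances of the converse conditions. I would handle it by checking each identity on elements, as done for the diamond identity in part (2).
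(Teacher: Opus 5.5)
Your proposal is correct and follows the paper's proof essentially step for step. The easy inclusions come from Lemma~\ref{lem:prep}, and the reverse inclusions are literal restatements of the factoring conditions, using the unfolded form of postfactoring with $x,y$ swapped in part (3). The operator identities come from Lemma~\ref{lem:toolbox}(a), and your derivation of the diamond identity via $\rhd=(\prep)^{-1}\circ\lhds$ merely supplements the element-wise check the paper gives.
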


\begin{proof}
(1) The right-to-left inclusion ${\lhds}\circ{\pre}\subseteq{\lhd}$
always holds by Lemma \ref{lem:prep}(1), and the left-to-right inclusion
${\lhd}\subseteq{\lhds}\circ{\pre}$ is a literal restatement of
prefactoring.  The operator identity is Lemma
\ref{lem:toolbox}(a).

(2) Same argument, using ${\lhds}\circ{\prep}\subseteq{\lhd}$ from Lemma
\ref{lem:prep}(2).  For the $\Diamond_{\rhd}$ identity: for any
$x\in X$,
\[
x\in\Diamond_{\rhd}U
\iff \exists y\,(x\lhd y \text{ and } y\in U)
\iff \exists z\,\exists y\,(x\lhds z,\ z\prep y,\ y\in U),
\]
using ${\lhd}={\lhds}\circ{\prep}$; since $\lhds$ is symmetric, the
right-hand side says exactly that some $z\lhds x$ lies in
$\Diamond_{(\prep)^{-1}}U=\{z\mid\exists y\,(z\prep y\text{ and }y\in U)\}$.

(3) Right-to-left is Lemma \ref{lem:prep}(2).  Left-to-right: the
condition displayed after Definition \ref{def:strong-ps} says that
whenever $x\lhd y$ there is $z$ with $z\post x$ and $z\lhds y$, i.e.,
$(x,y)\in({\post})^{-1}\circ{\lhds}$.  The operator identity is again
Lemma \ref{lem:toolbox}(a).
\end{proof}

\begin{remark}\label{rem:strongps-reduct}
By Lemma \ref{lem:reduct}(3), every reduct frame $\rho\circ{\le}$ is
prefactoring.  The constructions $G_1$ and $G_2$ of
Sections \ref{sec:os4} and \ref{sec:itb} may be seen as converses of this
observation: a fundamental frame arises as a reduct of an
$\OS$-frame (resp.\ an FSTB-frame) \emph{over the same carrier} as soon
as it is balanced (resp.\ strongly factoring).
\end{remark}

\subsection{The canonical model satisfies the factoring conditions}
\label{subsec:canonical-strongps}

We now return to the canonical model of Section \ref{sec:canonical}.
Recall the notation: points of $W_d$ are pairs $\langle\Gamma,\Delta\rangle$
of a $\vdash$-consistent, $\vdash$-closed theory $\Gamma$ and a
counter-theory $\Delta$ containing $\bot$ and closed under
$\vdash$-predecessors, $\vee$, and $F_{\Gamma}$, with
$\Gamma\cap\Delta=\varnothing$; the openness relation is
\[
\langle\Gamma_2,\Delta_2\rangle \lhd_d \langle\Gamma_1,\Delta_1\rangle
\iff \Gamma_1\cap\Delta_2=\varnothing;
\]
$W_a$ consists of the pairs $\langle\Gamma,\dwn(\neg[\Gamma])\rangle$ for
$\Gamma$ consistent and closed, and $W_b$ of the pairs
$\langle\Th(\{\alpha\}),\dwn(\{\alpha\to\beta\})\rangle$ with
$\alpha\nvdash\alpha\to\beta$.  For a point
$x=\langle\Gamma_x,\Delta_x\rangle$ we refer to $\Gamma_x$ and $\Delta_x$
as its theory and counter-theory.  Two elementary observations will be
used repeatedly.

\begin{fact}\label{fact:canonical-refinement}
Let $W\subseteq W_d$ and let $x,y\in W$.  In the frame $(W,\lhd_d)$:
\begin{enumerate}
\item if $\Gamma_x\subseteq\Gamma_y$, then $y\pre x$;
\item if $\Delta_x\subseteq\Delta_y$, then $y\post x$;
\item $\neg[\Gamma_x]\subseteq\Delta_x$, and consequently
$\dwn(\neg[\Gamma_x])\subseteq\Delta_x$.
\end{enumerate}
\end{fact}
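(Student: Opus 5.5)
The three items are direct unfoldings of the definitions of $\lhd_d$, $\pre$, $\post$, and of $W_d$. We treat them separately.

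For (1), suppose $\Gamma_x\subseteq\Gamma_y$. To show $y\pre x$, take any $z\in W$ with $z\lhd_d y$, i.e.\ $\Gamma_y\cap\Delta_z=\varnothing$. Since $\Gamma_x\subseteq\Gamma_y$, also $\Gamma_x\cap\Delta_z=\varnothing$, which is exactly $z\lhd_d x$. Hence every $z$ open to $y$ is open to $x$.

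For (2), suppose $\Delta_x\subseteq\Delta_y$. To show $y\post x$, take any $z\in W$ with $y\lhd_d z$, i.e.\ $\Gamma_z\cap\Delta_y=\varnothing$. Then $\Gamma_z\cap\Delta_x\subseteq\Gamma_z\cap\Delta_y=\varnothing$, which is $x\lhd_d z$.

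For (3), this is Lemma~\ref{lem:least-ct}(2) applied to $x\in W\subseteq W_d$. The counter-theory $\Delta_x$ contains $\bot$, is closed under $\vdash$-predecessors, and is $\to$-closed relative to $\Gamma_x$. So for $\gamma\in\Gamma_x$ we have $\gamma\wedge\bot\vdash\bot\in\Delta_x$, hence $\gamma\wedge\bot\in\Delta_x$. The $\to$-closure then gives $\neg\gamma=\gamma\to\bot\in\Delta_x$. Predecessor closure upgrades $\neg[\Gamma_x]\subseteq\Delta_x$ to $\dwn(\neg[\Gamma_x])\subseteq\Delta_x$.

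None of the steps poses a real obstacle. The only point requiring care is the variance in (1) and (2): a larger theory makes the state harder to be open to, which gives prerefinement, while a larger counter-theory makes the state open to fewer states, which gives postrefinement in the direction stated.
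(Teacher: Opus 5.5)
Your proposal is correct and follows the paper's proof essentially verbatim. Items (1) and (2) are unfolded exactly as in the paper. Item (3) uses the same argument: $\gamma\wedge\bot\in\Delta_x$ by predecessor closure, then $\to$-closure relative to $\Gamma_x$, then predecessor closure again. This is also the argument of Lemma~\ref{lem:least-ct}(2), which you rightly cite.
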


\begin{proof}
(1) If $z\lhd_d y$, then $\Gamma_y\cap\Delta_z=\varnothing$, hence
$\Gamma_x\cap\Delta_z=\varnothing$ by $\Gamma_x\subseteq\Gamma_y$, i.e.,
$z\lhd_d x$.

(2) If $y\lhd_d z$, then $\Gamma_z\cap\Delta_y=\varnothing$, hence
$\Gamma_z\cap\Delta_x=\varnothing$ by $\Delta_x\subseteq\Delta_y$, i.e.,
$x\lhd_d z$.

(3) Let $\alpha\in\Gamma_x$.  Since $\bot\in\Delta_x$,
$\alpha\wedge\bot\vdash\bot$, and $\Delta_x$ is closed under
$\vdash$-predecessors, we have $\alpha\wedge\bot\in\Delta_x$; hence
$\neg\alpha=\alpha\to\bot\in F_{\Gamma_x}(\Delta_x)\subseteq\Delta_x$.
Finally, if $\beta\vdash\neg\alpha$ for some $\alpha\in\Gamma_x$, then
$\beta\in\Delta_x$ by closure under $\vdash$-predecessors again.
\end{proof}

The carrier we will use adds to $W_a\cup W_b$ the following points, whose
theories are as small as possible and whose counter-theories are
inherited from $W_d$.

\begin{lemma}\label{lem:We-cont}
Let $\Thm:=\{\alpha\in\Fo\mid\ \vdash\alpha\}$ and
\[
W_e \;:=\; \{\langle\Thm,\Psi\rangle \mid
\langle\Phi,\Psi\rangle\in W_d \text{ for some } \Phi\}.
\]
If $\vdash$ satisfies the basic rules, then $W_e\subseteq W_d$.
\end{lemma}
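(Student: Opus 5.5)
We need to check each clause of the definition of $W_d$ for $\langle\Thm,\Psi\rangle$, given that $\langle\Phi,\Psi\rangle\in W_d$ for some $\Phi$. The clauses split naturally into those that concern only $\Psi$, those that concern only the theory, and the two that couple them. We will use the facts that $\Phi$ is closed and consistent and that $\Phi\cap\Psi=\varnothing$.

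The conditions on $\Psi$ alone are inherited verbatim from $\langle\Phi,\Psi\rangle\in W_d$. These are $\bot\in\Psi$, closure under $\vdash$-predecessors, and closure under $\vee$.

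For the theory, $\Thm$ is closed. If $\Thm\vdash\chi$, finitarity (Remark~\ref{rem:mon-fin}) gives finitely many theorems $\gamma_1,\dots,\gamma_n$ with $\gamma_1,\dots,\gamma_n\vdash\chi$. Cutting each $\vdash\gamma_i$ in turn by \rn{Cut} yields $\vdash\chi$. Equally, \rn{Cut} alone shows $\Th(\Thm)=\Thm$. For consistency, note $\Thm\subseteq\Phi$, because $\Phi$ is closed and \rn{Mon} turns $\vdash\alpha$ into $\Phi\vdash\alpha$. Hence $\Thm\vdash\bot$ would give $\vdash\bot$ and so $\bot\in\Phi$. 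But $\bot\in\Psi$, contradicting $\Phi\cap\Psi=\varnothing$; alternatively it contradicts consistency of $\Phi$ directly.

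The coupled conditions follow from the same inclusion $\Thm\subseteq\Phi$. Disjointness holds because $\Thm\cap\Psi\subseteq\Phi\cap\Psi=\varnothing$. For $\to$-closure of $\Psi$ relative to $\Thm$, take $\alpha\in\Thm$ with $\alpha\wedge\beta\in\Psi$. Then $\alpha\in\Phi$, so $\to$-closure of $\Psi$ relative to $\Phi$ gives $\alpha\to\beta\in\Psi$. In general, $\to$-closure relative to $\Gamma$ is antitone in $\Gamma$, so shrinking $\Phi$ to $\Thm$ preserves it.

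No step is a real obstacle; this is pure bookkeeping. The only point needing care is the inclusion $\Thm\subseteq\Phi$, which rests on closedness of $\Phi$ together with \rn{Mon}. Everything uses only the basic rules, as the statement claims.
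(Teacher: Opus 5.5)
Your proof is correct and follows the paper's argument essentially step for step: the inclusion $\Thm\subseteq\Phi$ (via \rn{Mon} and closedness of $\Phi$) gives consistency, disjointness, and $\to$-closure relative to $\Thm$; finitarity plus \rn{Cut} gives closedness of $\Thm$; and the conditions on $\Psi$ alone are inherited. The one flaw is your aside that ``\rn{Cut} alone'' gives $\Th(\Thm)=\Thm$: each application of \rn{Cut} discharges a single premise, so eliminating the possibly infinite premise set $\Thm$ needs finitarity (or an induction on derivations), which is why your main argument, like the paper's, rightly appeals to Remark~\ref{rem:mon-fin}.
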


\begin{proof}
Let $\langle\Thm,\Psi\rangle\in W_e$, witnessed by
$\langle\Phi,\Psi\rangle\in W_d$.  First, $\Thm\subseteq\Phi$: if
$\vdash\alpha$, then $\Phi\vdash\alpha$ by (MON), so $\alpha\in\Phi$
since $\Phi$ is $\vdash$-closed.  Hence $\Thm$ is $\vdash$-consistent
(as $\Phi$ is, using (MON)) and $\Thm\cap\Psi\subseteq\Phi\cap\Psi=\varnothing$.
Second, $\Thm$ is $\vdash$-closed: if $\Thm\vdash\alpha$, then by
finitarity (Remark \ref{rem:mon-fin}) there are theorems
$\gamma_1,\dots,\gamma_n$ with $\{\gamma_1,\dots,\gamma_n\}\vdash\alpha$;
cutting with $\vdash\gamma_i$ for each $i$ yields $\vdash\alpha$, so
$\alpha\in\Thm$.  Third, $\Psi$ contains $\bot$ and is closed under
$\vdash$-predecessors and $\vee$, since these conditions do not depend
on the theory component; and $\Psi$ is closed under $F_{\Thm}$ because
$F_{\Thm}(\Psi)\subseteq F_{\Phi}(\Psi)\subseteq\Psi$, using
$\Thm\subseteq\Phi$ and the monotonicity of $\Gamma\mapsto F_{\Gamma}(X)$.
\end{proof}

\begin{theorem}\label{thm:canonical-strongps}
Suppose $\vdash$ satisfies all the axioms of $\sysF$.  Let
$W_c:=W_a\cup W_b\cup W_e$ and let $\lhd_c$ be the restriction of
$\lhd_d$ to $W_c$.  Then $(W_c,\lhd_c)$ is reflexive, strongly factoring and converse-strongly factoring; in particular, it is balanced.
\end{theorem}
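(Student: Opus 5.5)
The plan is to handle the three properties separately, in each case exhibiting an explicit witness in $W_c$ and checking the required relations with Fact~\ref{fact:canonical-refinement}. First, $W_c\subseteq W_d$, by Lemma~\ref{lem:least-ct}(1) for $W_a$, Lemma~\ref{lem:Wb-in-Wd} for $W_b$, and Lemma~\ref{lem:We-cont} for $W_e$. Reflexivity then holds exactly as in Theorem~\ref{thm:canonical}(3): every $x\in W_d$ has $\Gamma_x\cap\Delta_x=\varnothing$, so $x\lhd_c x$.

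\emph{Strong factoring.} Let $y\lhd_c x$, so $\Gamma_x\cap\Delta_y=\varnothing$. I take the witness of Lemma~\ref{lem:ps-exist}, namely $z:=\langle\Gamma_x,\dwn(\neg[\Gamma_x])\rangle\in W_a\subseteq W_c$, and check four things.
\begin{enumerate}
\item $z\lhd y$. This says $\Gamma_y\cap\dwn(\neg[\Gamma_x])=\varnothing$, which is precisely what Lemma~\ref{lem:ps-exist} establishes using \rn{$\neg$Ant} and \rn{$\neg\neg$I}. This is the only place where the axioms of $\sysF$ beyond those of $\SysT$ are needed.
\item $y\lhd z$. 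This says $\Gamma_z\cap\Delta_y=\Gamma_x\cap\Delta_y=\varnothing$, which is the hypothesis.
\item $z\pre x$. This follows from Fact~\ref{fact:canonical-refinement}(1), since $\Gamma_x\subseteq\Gamma_z$.
\item $x\post z$. This follows from Fact~\ref{fact:canonical-refinement}(2) once we have $\Delta_z\subseteq\Delta_x$, and that inclusion is Fact~\ref{fact:canonical-refinement}(3).
\end{enumerate}
Hence $z\lhds y$ and $z\prep x$. Since $z$ lies in the carrier and $\lhd_c$ is a restriction of $\lhd_d$, these relations computed in $W_c$ are the same as those computed in $W_d$. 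Fact~\ref{fact:canonical-refinement} is stated for any $W\subseteq W_d$, so it applies to $W_c$ directly.

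\emph{Converse-strong factoring.} By the unfolding after Definition~\ref{def:strong-ps}, given $x\lhd_c y$, that is $\Gamma_y\cap\Delta_x=\varnothing$, I need $z$ with $z\lhds y$, $z\post x$ and $x\pre z$. Here I use the new points: $z:=\langle\Thm,\Delta_x\rangle$, which lies in $W_e$ because $x\in W_d$.
\begin{enumerate}
\item $z\lhd y$. This reads $\Gamma_y\cap\Delta_x=\varnothing$, which is the hypothesis.
\item $y\lhd z$. This reads $\Thm\cap\Delta_y=\varnothing$. It holds because $\Thm\subseteq\Gamma_y$ (by \rn{Mon} and closure of $\Gamma_y$) and $\Gamma_y\cap\Delta_y=\varnothing$.
\item $z\post x$. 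This follows from Fact~\ref{fact:canonical-refinement}(2), since $\Delta_x=\Delta_z$.
\item $x\pre z$. This follows from Fact~\ref{fact:canonical-refinement}(1), since $\Thm\subseteq\Gamma_x$.
\end{enumerate}
Balancedness then follows, because strong factoring implies prefactoring and converse-strong factoring implies postfactoring.

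I expect the only substantive step to be $z\lhd y$ in the strong factoring case, since it needs double negation introduction. It is already available as Lemma~\ref{lem:ps-exist}, so the remaining work is to check that each witness belongs to $W_c$ and to match the orientation conventions of $\pre$ and $\post$ correctly.
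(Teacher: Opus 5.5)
Your proposal is correct and follows the paper's proof essentially step for step. It uses the same witnesses $\langle\Gamma_x,\dwn(\neg[\Gamma_x])\rangle\in W_a$ and $\langle\Thm,\Delta_x\rangle\in W_e$, discharges the remaining relations with Fact~\ref{fact:canonical-refinement}, and differs only in that for $z\lhd y$ you cite Lemma~\ref{lem:ps-exist}, whereas the paper re-derives that step inline, reaching its contradiction via $\neg\neg\alpha\in\Gamma_x\cap\Delta_y$ rather than $\neg\varphi\in\Gamma_x\cap\Delta_y$.
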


\begin{proof}
Throughout, $x=\langle\Gamma_x,\Delta_x\rangle$ and
$y=\langle\Gamma_y,\Delta_y\rangle$ range over $W_c$.  Note that
$W_c\subseteq W_d$ by Lemmas \ref{lem:least-ct}, \ref{lem:Wb-in-Wd}, and \ref{lem:We-cont}, so
Fact \ref{fact:canonical-refinement} applies, and $\lhd_c$ is reflexive
since $\Gamma_x\cap\Delta_x=\varnothing$ for all $x\in W_d$.

\emph{Strong factoring.}  Suppose $y\lhd_c x$, i.e.,
$\Gamma_x\cap\Delta_y=\varnothing$.  Let
\[
z\;:=\;\bigl\langle\,\Gamma_x,\ \dwn(\neg[\Gamma_x])\,\bigr\rangle .
\]
Since $\Gamma_x$ is consistent and closed (as $x\in W_d$), we have
$z\in W_a\subseteq W_c$.  We check the four required properties.
\begin{itemize}
\item $y\lhd_c z$: $\Gamma_z\cap\Delta_y=\Gamma_x\cap\Delta_y=\varnothing$
by hypothesis.
\item $z\lhd_c y$: suppose toward a contradiction that some
$\varphi\in\Gamma_y\cap\dwn(\neg[\Gamma_x])$, say $\varphi\vdash\neg\alpha$
with $\alpha\in\Gamma_x$.  Since $\Gamma_y$ is $\vdash$-closed,
$\neg\alpha\in\Gamma_y$, so $\neg\neg\alpha\in\neg[\Gamma_y]\subseteq\Delta_y$
by Fact \ref{fact:canonical-refinement}(3).  On the other hand, by
$(\neg\neg\mathrm{I})$ and closure of $\Gamma_x$ we get
$\neg\neg\alpha\in\Gamma_x$, contradicting
$\Gamma_x\cap\Delta_y=\varnothing$.  Hence
$\Gamma_y\cap\Delta_z=\varnothing$, i.e., $z\lhd_c y$.  Together with the
previous item, $z\lhds y$.
\item $z\pre x$: $\Gamma_x\subseteq\Gamma_z$ (indeed
$\Gamma_z=\Gamma_x$), so Fact \ref{fact:canonical-refinement}(1) applies.
\item $x\post z$: $\Delta_z=\dwn(\neg[\Gamma_x])\subseteq\Delta_x$ by
Fact \ref{fact:canonical-refinement}(3), so Fact
\ref{fact:canonical-refinement}(2) applies (with the roles
$\Delta_z\subseteq\Delta_x$).
\end{itemize}
Thus $z\lhds y$ and $z\prep x$, as required.

\emph{Converse-strong factoring.}  Suppose $x\lhd_c y$, i.e.,
$\Gamma_y\cap\Delta_x=\varnothing$.  Let
\[
z\;:=\;\langle\,\Thm,\ \Delta_x\,\rangle .
\]
Since $x\in W_d$, we have $z\in W_e\subseteq W_c$ by Lemma
\ref{lem:We-cont}.  We check:
\begin{itemize}
\item $z\lhd_c y$: $\Gamma_y\cap\Delta_z=\Gamma_y\cap\Delta_x=\varnothing$
by hypothesis.
\item $y\lhd_c z$: $\Gamma_z\cap\Delta_y=\Thm\cap\Delta_y
\subseteq\Gamma_y\cap\Delta_y=\varnothing$, using $\Thm\subseteq\Gamma_y$
(as in the proof of Lemma \ref{lem:We-cont}).  Together, $z\lhds y$.
\item $z\post x$: $\Delta_x\subseteq\Delta_z$ (indeed
$\Delta_z=\Delta_x$), so Fact \ref{fact:canonical-refinement}(2) applies.
\item $x\pre z$: $\Gamma_z=\Thm\subseteq\Gamma_x$, so Fact
\ref{fact:canonical-refinement}(1) applies.
\end{itemize}
Thus $z\lhds y$, $z\post x$, and $x\pre z$, i.e., $x\prep z$, which is
exactly the witness required for converse-strong factoring.
\end{proof}

\begin{remark}
The witness for strong factoring sits in $W_a$ and refines
$x$ by \emph{maximizing} the counter-theory relative to the theory
$\Gamma_x$ (down to the least admissible one, $\dwn(\neg[\Gamma_x])$,
which prunes $\Delta_x$); the witness for the inverse condition sits in
$W_e$ and refines $x$ in the converse sense by \emph{minimizing} the
theory (down to $\Thm$) while keeping the counter-theory $\Delta_x$.
The two constructions are exact mirror images, and only the first
requires the characteristic axiom $(\neg\neg\mathrm{I})$ of $\sysF$.
\end{remark}

We package the countermodel construction in the form used in the
faithfulness proofs below.

\begin{corollary}\label{cor:strong-countermodel}
Suppose $\vdash$ satisfies all the axioms of $\sysF$ and
$\varphi\nvdash_{\sysF}\psi$.  Then there are an $\sysF$-model
$\mathcal M=(W,\lhd,V)$ (i.e., a reflexive, pseudo-symmetric model whose
valuation takes values in $\FP(c_{\lhd})$) and a point $w\in W$ such
that:
\begin{enumerate}
\item $(W,\lhd)$ is strongly factoring and converse-strongly factoring (hence balanced);
\item $w\in\den{\varphi}_{\mathcal M}$ and
$w\notin\den{\psi}_{\mathcal M}$.
\end{enumerate}
\end{corollary}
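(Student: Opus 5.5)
The plan is to take the canonical model of Section~\ref{sec:canonical} over the enlarged carrier $W_c:=W_a\cup W_b\cup W_e$ and read off everything from results already established. Since $W_a\cup W_b\subseteq W_c\subseteq W_d$ (by Lemmas~\ref{lem:least-ct}, \ref{lem:Wb-in-Wd}, and \ref{lem:We-cont}), Theorem~\ref{thm:canonical} applies verbatim to $W_c$. So $\mathcal M:=(W_c,\lhd_c,V_c)$ with $V_c(p)=|p|_{W_c}$ is a model, $V_c$ is admissible by Lemma~\ref{lem:fixpoint}, and the Truth Lemma gives $\den{\alpha}_{\mathcal M}=|\alpha|_{W_c}$ for all $\alpha$. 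The frame is reflexive, and because $\vdash\,=\,\vF$ it is an $\SysF$-frame. Condition (1) of the corollary is then exactly Theorem~\ref{thm:canonical-strongps}.

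For condition (2), I will follow the proof of Theorem~\ref{thm:completeness}. From $\varphi\nvF\psi$, let $\Gamma'=\Th(\{\varphi\})$. This set is closed by \rn{Cut}. It is consistent, since $\varphi\vdash\bot$ together with \rn{$\bot$E} would yield $\varphi\vdash\psi$. Moreover $\psi\notin\Gamma'$. Set $w:=\langle\Gamma',\dwn(\neg[\Gamma'])\rangle\in W_a\subseteq W_c$. Then $\varphi\in\Gamma_w$ and $\psi\notin\Gamma_w$, and the Truth Lemma gives $w\in\den{\varphi}_{\mathcal M}$ and $w\notin\den{\psi}_{\mathcal M}$.

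I do not expect a real obstacle. The one point to check is that nothing in Section~\ref{sec:canonical} depended on taking the carrier to be exactly $W_a\cup W_b$. Every lemma there was stated for an arbitrary $W$ with $W_b\subseteq W\subseteq W_d$, and pseudo-symmetry needs only $W_a\subseteq W$, so adding $W_e$ is harmless. Pseudo-symmetry also follows directly from strong factoring, as observed after Definition~\ref{def:strong-ps}.
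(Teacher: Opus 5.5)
Your proposal is correct and follows the paper's proof essentially step for step: you take the canonical model over $W_c=W_a\cup W_b\cup W_e$, get the factoring conditions from Theorem~\ref{thm:canonical-strongps}, and evaluate the witness $w=\langle\Th(\{\varphi\}),\dwn(\neg[\Th(\{\varphi\})])\rangle\in W_a$ by the Truth Lemma. Your appeal to Theorem~\ref{thm:canonical}, which is stated for any carrier between $W_a\cup W_b$ and $W_d$, is in fact slightly more precise than the paper's citation of Theorem~\ref{thm:completeness}, whose proof fixes the carrier to be $W_a\cup W_b$.
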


\begin{proof}
Note first that $\varphi\nvdash\bot$: otherwise $\varphi\vdash\psi$ by
$(\bot\mathrm{E})$ and (CUT).  Hence
$\Th(\{\varphi\})$ is consistent and closed, so
\[
w\;:=\;\bigl\langle\,\Th(\{\varphi\}),\ \dwn\bigl(\neg[\Th(\{\varphi\})]\bigr)\,\bigr\rangle
\;\in\;W_a .
\]
Let $W_c:=W_a\cup W_b\cup W_e$ and let $\mathcal M_c$ be the canonical
model over $W_c$ as in Theorem \ref{thm:completeness} (whose hypotheses
$W_a\cup W_b\subseteq W_c\subseteq W_d$ hold by Lemmas \ref{lem:least-ct}, \ref{lem:Wb-in-Wd}
and \ref{lem:We-cont}).  By Theorem \ref{thm:completeness}, the Truth
Lemma holds over $W_c$, every $\den{\alpha}_{\mathcal M_c}$ is a
proposition (Fixpoint Lemma), and $\lhd_c$ is reflexive and
pseudo-symmetric; by Theorem \ref{thm:canonical-strongps}, $(W_c,\lhd_c)$
moreover satisfies the factoring conditions in (1).  Finally,
$\varphi\in\Th(\{\varphi\})$ while $\psi\notin\Th(\{\varphi\})$ (else
$\varphi\vdash\psi$), so the Truth Lemma yields (2).
\end{proof}

\section{A full and faithful translation into ortho-S4}\label{sec:os4}

\subsection{The logic $\OS$ and its relational semantics}

Let $\Lb$ be the language given by the grammar
\[
\varphi ::= \top \mid p \mid \neg\varphi \mid (\varphi\wedge\varphi)
\mid (\varphi\vee\varphi) \mid \Box\varphi ,
\]
where $p\in\mathrm{Prop}$, with $\bot:=\neg\top$.  Note that on the
target side negation is \emph{primitive}, as in orthologic, whereas in
$\Fo$ it is defined from the preconditional.

\begin{definition}\label{def:os4}
Ortho-S4, denoted $\OS$, is the smallest binary relation
${\vdash_{\OS}}\subseteq\Lb\times\Lb$ satisfying, for all
$\varphi,\psi,\chi\in\Lb$:
\begin{enumerate}
\item[(o1)] $\varphi\vdash\varphi$;\quad
$\varphi\vdash\top$;\quad
if $\varphi\vdash\psi$ and $\psi\vdash\chi$, then $\varphi\vdash\chi$;
\item[(o2)] $\varphi\wedge\psi\vdash\varphi$;\quad
$\varphi\wedge\psi\vdash\psi$;\quad
if $\varphi\vdash\psi$ and $\varphi\vdash\chi$, then
$\varphi\vdash\psi\wedge\chi$;
\item[(o3)] $\varphi\vdash\varphi\vee\psi$;\quad
$\psi\vdash\varphi\vee\psi$;\quad
if $\varphi\vdash\chi$ and $\psi\vdash\chi$, then
$\varphi\vee\psi\vdash\chi$;
\item[(o4)] $\varphi\vdash\neg\neg\varphi$;\quad
$\neg\neg\varphi\vdash\varphi$;\quad
$\varphi\wedge\neg\varphi\vdash\psi$;\quad
if $\varphi\vdash\psi$, then $\neg\psi\vdash\neg\varphi$;
\item[(o5)] if $\varphi\vdash\psi$, then $\Box\varphi\vdash\Box\psi$;\quad
$\Box\varphi\wedge\Box\psi\vdash\Box(\varphi\wedge\psi)$;\quad
$\top\vdash\Box\top$;
\item[(o6)] $\Box\varphi\vdash\varphi$;\quad
$\Box\varphi\vdash\Box\Box\varphi$.
\end{enumerate}
\end{definition}

Thus $\OS$ is orthologic \cite{Goldblatt1974,Holliday2023} equipped with
an S4 necessity.

For the semantics, fix a set $X$, a reflexive symmetric relation
$\between$ on $X$ (\emph{compatibility}), and write
$\neg_{\between}U:=\Box_{\between}(-U)$ and
$c_{\between}:=\neg_{\between}\neg_{\between}=\Box_{\between}\Diamond_{\between}$.
Viewing $\between$ itself as an openness relation, we take the
\emph{propositions} of $(X,\between)$ to be the members of
$\FP(c_{\between})$.  Indeed, $c_{\between}$ is exactly the closure
operator $c_{\lhd}$ of Lemma \ref{lem:toolbox}(g) for
$\lhd:=\between$, so
\[
\FP(c_{\between})=\Box_{\between}[\wp(X)],
\]
and $c_{\between}(U)$ is the least proposition of $(X,\between)$
containing $U$.

\begin{definition}\label{def:os4-frame}
An \emph{$\OS$-frame} is a triple $(X,\between,\le)$ where $X$ is a
nonempty set, $\between$ is a reflexive symmetric relation on $X$, $\le$
is a preorder on $X$, and
\begin{itemize}
\item[(int)] if $U\in\FP(c_{\between})$, then
$\Box_{\le}U\in\FP(c_{\between})$---equivalently, every set of the
form $\Box_{\le}\Box_{\between}Z$ belongs to $\FP(c_{\between})$.
\end{itemize}
An \emph{$\OS$-model} $\mathcal M=(X,\between,\le,V)$ adds a valuation
$V:\mathrm{Prop}\to\FP(c_{\between})$.  Truth sets are defined by
\[
\den{\top}=X,\quad \den{p}=V(p),\quad
\den{\neg\varphi}=\neg_{\between}\den{\varphi},\quad
\den{\varphi\wedge\psi}=\den{\varphi}\cap\den{\psi},
\]
\[
\den{\varphi\vee\psi}=c_{\between}\bigl(\den{\varphi}\cup\den{\psi}\bigr),
\qquad
\den{\Box\varphi}=\Box_{\le}\den{\varphi}.
\]
\end{definition}

We read $y\le x$ as ``$y$ refines $x$.'' Thus $\le$ is also written in predecessor style: $\Box_{\le}$ quantifies over refinements $y$ of $x$; in ordinary forward accessibility notation, this would be written $xRy$. This matches the conventions of \S\ref{subsec:toolbox}.
Condition (int) is the algebraic form of the modal-frame
interaction condition in \cite[Def.~2.16, Lem.~2.17]{HM2026}: it says
exactly that $\Box_{\le}$ sends $\between$-propositions to
$\between$-propositions.  We use this equivalent form throughout.

\begin{lemma}\label{lem:os4-dual}
Let $(X,\between,\le)$ be an $\OS$-frame.
\begin{enumerate}
\item $(\FP(c_{\between}),\cap,\sqcup,\neg_{\between},\varnothing,X)$ is a
(complete) ortholattice, where $U\sqcup V:=c_{\between}(U\cup V)$.
\item $\Box_{\le}$ restricts to an operation on $\FP(c_{\between})$
satisfying $\Box_{\le}X=X$,
$\Box_{\le}(U\cap V)=\Box_{\le}U\cap\Box_{\le}V$,
$\Box_{\le}U\subseteq U$, and
$\Box_{\le}U\subseteq\Box_{\le}\Box_{\le}U$.
\item Consequently, in every $\OS$-model,
$\den{\varphi}\in\FP(c_{\between})$ for all $\varphi\in\Lb$.
\end{enumerate}
\end{lemma}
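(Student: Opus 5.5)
For part (1), I will treat $\between$ as an openness relation on $X$ and use Lemma~\ref{lem:toolbox}(g). It gives that $c_{\between}=\Box_{\between}\Diamond_{\between}$ is a closure operator. So $\FP(c_{\between})$ is a complete lattice whose meets are intersections and whose joins are $c_{\between}$ of unions (cf.\ \cite[Prop.~2.10]{HM2026}). The top is $X$. The bottom is $c_{\between}(\varnothing)=\Box_{\between}\varnothing=\varnothing$, since $\between$ is reflexive.

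Next I check that $\neg_{\between}U=\Box_{\between}(-U)$ lands in $\FP(c_{\between})=\Box_{\between}[\wp(X)]$; this holds by the form of the definition. For $U$ a fixpoint, $U=\Box_{\between}Z$. Then $\neg_{\between}\neg_{\between}U=\Box_{\between}\Diamond_{\between}U=c_{\between}U=U$, using symmetry to rewrite $\Box_{\between}(-\Box_{\between}(-U))$ as $\Box_{\between}\Diamond_{\between}U$. Antitonicity of $\neg_{\between}$ is immediate from the definition. Reflexivity gives $U\cap\neg_{\between}U=\varnothing$. Involution and antitonicity together make $\neg_{\between}$ an order-reversing involution. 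De Morgan duality then yields $U\sqcup\neg_{\between}U=\neg_{\between}(\neg_{\between}U\cap U)=\neg_{\between}\varnothing=X$. This is the standard Goldblatt ortholattice argument, and I would cite \cite{Goldblatt1974} for it.

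For part (2), the frame condition (int) says that $\Box_{\le}$ maps $\FP(c_{\between})$ into itself. The remaining identities follow from Lemma~\ref{lem:toolbox}:
\begin{itemize}
\item $\Box_{\le}X=X$ is trivial;
\item preservation of binary intersections is (f);
\item $\Box_{\le}U\subseteq U$ is (c), since $\le$ is reflexive;
\item $\Box_{\le}U\subseteq\Box_{\le}\Box_{\le}U$ is (d), by transitivity.
\end{itemize}

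For part (3), I argue by induction on $\varphi\in\Lb$. The cases are:
\begin{itemize}
\item $\den{\top}=X$ and $\den{p}=V(p)$ are fixpoints;
\item intersections of fixpoints are fixpoints, since $c_{\between}$ is monotone and inflationary;
\item $c_{\between}(\cdot)$ always returns a fixpoint, by idempotence;
\item $\neg_{\between}$ returns a fixpoint, by part (1);
\item $\Box_{\le}$ preserves fixpoints, by (int).
\end{itemize}

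The only step that needs any care is the involution computation in part (1), which is where symmetry of $\between$ is used. Even that is routine given Lemma~\ref{lem:toolbox}(e).
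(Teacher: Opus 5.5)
Your proof is correct and follows the paper's argument essentially step for step: Lemma~\ref{lem:toolbox}(g) gives the complete lattice of fixpoints, direct checks show $\neg_{\between}$ is an antitone involution with $U\cap\neg_{\between}U=\varnothing$, (int) together with Lemma~\ref{lem:toolbox}(f),(c),(d) gives part (2), and a routine induction gives part (3). One small correction of emphasis: the rewrite $\Box_{\between}(-\Box_{\between}(-U))=\Box_{\between}\Diamond_{\between}U$ is pure $\Box$/$\Diamond$ duality and needs no symmetry (the involution on fixpoints is then just $c_{\between}U=U$); symmetry is really used earlier, to identify $\Box_{\between}\Diamond_{\between}$ with the closure operator $\Box_{\lhd}\Diamond_{\rhd}$ of Lemma~\ref{lem:toolbox}(g) for $\lhd:=\between$, i.e.\ to make it extensive.
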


\begin{proof}
(1) is a classical observation going back to Birkhoff (see \cite{Birkhoff1940},
\cite[\S~2.3]{HM2026} and \cite[\S~4]{Holliday2023} for discussion): the
members of $\FP(c_{\between})$ are the fixpoints of the closure operator
$c_{\between}$, hence are closed under intersections, with join given by
the closure of the union; $\neg_{\between}$ maps these fixpoints to
fixpoints since its image consists of fixpoints (Lemma
\ref{lem:toolbox}(g)), it is antitone,
$\neg_{\between}\neg_{\between}U=U$ on fixpoints by definition, and
$U\cap\neg_{\between}U=\varnothing$ by reflexivity of $\between$;
finally $\varnothing,X\in\FP(c_{\between})$ by reflexivity of $\between$.

(2) Closure under $\Box_{\le}$ is condition (int); the remaining properties
are Lemma \ref{lem:toolbox}(f),(c),(d).

(3) By induction on $\varphi$, using (1) for the connective clauses and
(2) for $\Box$.
\end{proof}




\begin{theorem}[{\cite[Thm.~3.3]{HM2026}}]\label{thm:os4-complete}
For all $\varphi,\psi\in\Lb$: $\varphi\vdash_{\OS}\psi$ if and only if
$\den{\varphi}\subseteq\den{\psi}$ in every $\OS$-model.
\end{theorem}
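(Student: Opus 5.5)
This theorem is cited from \cite[Thm.~3.3]{HM2026}, so one could simply invoke it. To prove it directly, I would run the standard two-step argument: soundness by induction on derivations, then completeness via a canonical (filter--ideal style) $\OS$-model.

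\emph{Soundness.} By Lemma~\ref{lem:os4-dual}(3), every truth set in an $\OS$-model is a proposition of $(X,\between)$. So it suffices to check each rule (o1)--(o6) against the algebra $(\FP(c_{\between}),\cap,\sqcup,\neg_{\between},\varnothing,X,\Box_{\le})$.
\begin{itemize}
\item Rules (o1)--(o4) are exactly the ortholattice laws, given by Lemma~\ref{lem:os4-dual}(1).
\item For (o5) and (o6), monotonicity and meet-preservation of $\Box_{\le}$ hold by Lemma~\ref{lem:toolbox}(f), and $\Box_{\le}X=X$ is immediate.
\item The T-axiom and the 4-axiom follow from reflexivity and transitivity of $\le$, via Lemma~\ref{lem:toolbox}(c),(d).
\end{itemize}
Consequently, every derivable sequent $\varphi\vdash\psi$ yields $\den{\varphi}\subseteq\den{\psi}$.

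\emph{Completeness.} I would build a canonical model, in analogy with the construction of Section~\ref{sec:canonical}.
\begin{itemize}
\item Take the points $X$ to be pairs $(F,I)$ with $F$ a proper $\OS$-filter and $I$ an ideal of the Lindenbaum algebra, chosen so that $F\cap I=\varnothing$. The simplest option is to take $I$ to be the ideal generated by $\neg[F]$, i.e.\ points are just consistent filters.
\item Set $F\between G$ iff there is no $a\in F$ with $\neg a\in G$. This relation is reflexive by consistency and symmetric by involutivity of $\neg$.
\item Set $G\le F$ iff $\{a : \Box a\in F\}\subseteq G$. This is a preorder by (o6).
\item Define the valuation by $V(p)=\{F : [p]\in F\}$.
\end{itemize}
The key lemmas are then the following.
\begin{enumerate}
\item The sets $\widehat a=\{F: a\in F\}$ are exactly $c_{\between}$-fixpoints among the representable sets. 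This is Goldblatt's representation of ortholattices: if $a\notin F$, use the principal filter ${\uparrow}\neg a$, which is compatible with $F$ but incompatible with everything in $\widehat a$.
\item The truth lemma holds, with the $\neg$ and $\vee$ cases following from this representation.
\item The $\Box$ case: $\Box a\in F$ iff every $G\le F$ contains $a$. For the nontrivial direction, given $\Box a\notin F$, take $G$ to be the filter generated by $\{b:\Box b\in F\}$. It is a filter by (o5), and it omits $a$ by monotonicity of $\Box$ together with meet-preservation.
\item Condition (int) holds: $\Box_{\le}\widehat a=\widehat{\Box a}$, so $\Box_{\le}$ maps propositions of the form $\widehat a$ to propositions.
\end{enumerate}

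The main obstacle is step 4, condition (int) for \emph{arbitrary} propositions $U\in\FP(c_{\between})$, not just the representable ones $\widehat a$. I would first try to avoid it: restrict the canonical frame to a general-frame setting, or replace $\FP(c_{\between})$ by the admissible sets $\widehat a$. If the definition genuinely requires arbitrary fixpoints, I would instead follow \cite{HM2026} and use the specific interaction condition of their Definition~2.16 on the canonical relations. That is, I would show that if $G\le F$ and $H\between G$, then there is a suitable $H'$ with $H'\between F$ and $H'$ refining $H$ appropriately, and verify this using the filter generated by the boxed part. This interaction condition is the delicate point, and I would import its verification from \cite[Lem.~2.17]{HM2026} rather than redo it.
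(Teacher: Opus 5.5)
The paper does not prove this statement: it cites \cite[Thm.~3.3]{HM2026} and remarks only that the frames used there satisfy (int), since sending propositions to propositions is built into \cite[Def.~2.16]{HM2026}. Your opening sentence is therefore exactly the paper's route.

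Your self-contained version is correct as far as it goes: soundness via Lemma~\ref{lem:os4-dual}, proper filters of the Lindenbaum algebra with Goldblatt compatibility and the canonical $\le$, and the truth lemma for $\neg$, $\vee$, $\Box$. But it stops at the one step with real content, and neither fallback closes it.
\begin{itemize}
\item Condition (int) in Definition~\ref{def:os4-frame} is a frame condition over \emph{all} of $\FP(c_{\between})$. A canonical structure shown only to preserve the sets $\widehat a$ is not an $\OS$-model. The general-frame workaround would therefore prove a different completeness theorem.
\item \cite[Lem.~2.17]{HM2026}, as this paper uses it, only states that (int) is equivalent to a first-order interaction condition. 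It does not verify that condition on \emph{your} canonical frame.
\item The condition you describe (an $H'\between F$ that ``refines $H$'') omits the clause that matters, which concerns the points compatible with $H'$.
\end{itemize}

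What is needed is this: whenever $G\le F$ and $H\between G$, there is $H'\between F$ such that every $K'\between H'$ has some $K''\le K'$ with $K''\between H$. This suffices. If $F\notin\Box_{\le}U$, pick $G\le F$ with $G\notin U$. Since $U$ is a fixpoint, there is $H\between G$ all of whose compatibles lie outside $U$. Then every $K'\between H'$ has some $K''\le K'$ outside $U$, so $F\notin c_{\between}(\Box_{\le}U)$.

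The missing idea is to build $H'$ from the derived diamond $\Diamond a:=\neg\Box\neg a$. Put $H':=\{c\mid \Diamond b\le c \text{ for some } b\in H\}$.
\begin{itemize}
\item $H'$ is a filter because $\Diamond$ is monotone, so $\Diamond(b_1\wedge b_2)\le\Diamond b_1\wedge\Diamond b_2$.
\item $H'$ is proper because $b\le\Diamond b$ by (T).
\item $H'\between F$: if $\Diamond b\le c$ with $b\in H$ and $\neg c\in F$, then $\neg c\le\neg\Diamond b=\Box\neg b$. So $\Box\neg b\in F$, hence $\neg b\in G$, contradicting $H\between G$.
\item Given $K'\between H'$, take $K'':=\{a\mid \Box a\in K'\}$. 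This is a proper filter with $K''\le K'$, exactly as in your $\Box$ case (properness again by (T)). If $a\in K''$ and $\neg a\in H$, then $\Diamond\neg a\in H'$ while $\neg\Diamond\neg a=\Box a\in K'$, contradicting $K'\between H'$. Hence $K''\between H$.
\end{itemize}
With this step inserted, your sketch becomes a complete direct proof of the cited theorem.
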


Note that the frames employed in
the completeness proof of \cite{HM2026} satisfy condition (int) of
Definition \ref{def:os4-frame}, since (int) is precisely the requirement
that $\Box$ send propositions to propositions, which is built into the
modal frames of \cite[Def.~2.16]{HM2026}.

The key observation on the target side is that the Sasaki hook is
definable in $\Lb$ and that its truth sets take exactly the form of the
consequent of the relational preconditional clause.

\begin{lemma}[Sasaki hook]\label{lem:sasaki}
For $\varphi,\psi\in\Lb$, define
\[
\varphi\shook\psi\;:=\;\neg\bigl(\varphi\wedge\neg(\varphi\wedge\psi)\bigr).
\]
Then in any $\OS$-model, with $A=\den{\varphi}$ and $B=\den{\psi}$,
\[
\den{\varphi\shook\psi}
\;=\;\Box_{\between}\bigl(-A\cup\Diamond_{\between}(A\cap B)\bigr).
\]
\end{lemma}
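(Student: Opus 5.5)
We unfold the semantic clauses of Definition~\ref{def:os4-frame} and simplify using the fact that truth sets are $\between$-propositions (Lemma~\ref{lem:os4-dual}(3)). By definition,
\[
\den{\varphi\shook\psi}=\neg_{\between}\bigl(A\cap\neg_{\between}(A\cap B)\bigr)
=\Box_{\between}\bigl(-(A\cap\Box_{\between}(-(A\cap B)))\bigr).
\]
De Morgan for the set complement gives $-(A\cap\Box_{\between}(-(A\cap B)))=-A\cup-\Box_{\between}(-(A\cap B))$. The duality $-\Box_{\between}(-U)=\Diamond_{\between}U$ is immediate from the predecessor-style definitions of $\Box_Q,\Diamond_Q$ in \S\ref{subsec:toolbox}. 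Hence the complement equals $-A\cup\Diamond_{\between}(A\cap B)$, and applying $\Box_{\between}$ yields the displayed right-hand side.

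The argument is thus a pure set-theoretic computation. It needs neither the preorder $\le$ nor condition (int), and it does not even need $A,B$ to be propositions. The only points to watch are the following.
\begin{enumerate}
\item Negation is primitive on the $\Lb$ side, so $\den{\neg\chi}=\Box_{\between}(-\den{\chi})$ is applied literally, twice.
\item The conjunction clause is intersection.
\item Symmetry of $\between$ means there is no ambiguity between $\Diamond_{\between}$ and $\Diamond_{\between^{-1}}$.
\end{enumerate}
This last point matters for later comparison with $\Diamond_{\rhd}$ in $\to_{\lhd}$, but not for the present identity.

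The only step requiring care, rather than a genuine obstacle, is keeping track of the direction of $\between$ in the duality $-\Box_{\between}-=\Diamond_{\between}$. With the predecessor convention, both operators quantify over $y\between x$, so the duality holds verbatim. I would state it inline, citing the definitions, and conclude.
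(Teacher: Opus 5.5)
Your proposal is correct and is essentially the paper's own proof: both unfold the primitive negation clause $\den{\neg\chi}=\Box_{\between}(-\den{\chi})$ twice, apply De Morgan, and use the duality $-\Box_{\between}(-W)=\Diamond_{\between}W$. Your opening appeal to Lemma~\ref{lem:os4-dual}(3) is superfluous, as you note yourself; the duality holds for any relation under the predecessor-style definitions, so symmetry of $\between$ is not needed either.
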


\begin{proof}
Unwinding the clauses and using
$-\Box_{\between}(-W)=\Diamond_{\between}W$:
\[
\den{\varphi\shook\psi}
=\Box_{\between}\Bigl(-\bigl(A\cap\Box_{\between}(-(A\cap B))\bigr)\Bigr)
=\Box_{\between}\bigl(-A\cup-\Box_{\between}(-(A\cap B))\bigr)
=\Box_{\between}\bigl(-A\cup\Diamond_{\between}(A\cap B)\bigr).\qedhere
\]
\end{proof}

\begin{remark}
On the dual ortholattice $\FP(c_{\between})$, the operation
$(A,B)\mapsto\neg_{\between}(A\cap\neg_{\between}(A\cap B))$ is
precisely the Sasaki hook, which is a preconditional by
\cite[Cor.~1]{Holliday2025}.  This is the algebraic counterpart of the
translation defined next, and it connects the present section to the
taxonomy of preconditionals recalled in Section \ref{sec:algebra}.
\end{remark}

\subsection{The GMT-style translation}

\begin{definition}\label{def:transI}
The translation $(\cdot)\tI:\Fo\to\Lb$ is defined by
\[
\bot\tI=\bot,\qquad p\tI=\Box p,\qquad
(\alpha\wedge\beta)\tI=\alpha\tI\wedge\beta\tI,\qquad
(\alpha\vee\beta)\tI=\alpha\tI\vee\beta\tI,
\]
\[
(\alpha\to\beta)\tI\;=\;\Box\,(\alpha\tI\shook\beta\tI)
\;=\;\Box\neg\bigl(\alpha\tI\wedge\neg(\alpha\tI\wedge\beta\tI)\bigr).
\]
\end{definition}

\begin{remark}\label{rem:transI-neg}
The clause for $\to$ has the same shape $\Box(\cdot\to\cdot)$ as the
original GMT clause for intuitionistic implication, with the material
arrow replaced by the Sasaki hook.  Moreover, for the defined negation
$\neg\alpha:=\alpha\to\bot$ of $\Fo$ we have, in every $\OS$-model,
\[
\den{(\neg\alpha)\tI}
=\Box_{\le}\Box_{\between}\bigl(-\den{\alpha\tI}\cup
\Diamond_{\between}(\den{\alpha\tI}\cap\varnothing)\bigr)
=\Box_{\le}\,\neg_{\between}\den{\alpha\tI}
=\den{\Box\neg\alpha\tI},
\]
using Lemma \ref{lem:sasaki} and $\den{\bot}=\neg_{\between}X=\varnothing$
(reflexivity of $\between$).  Hence, by Theorem \ref{thm:os4-complete}, $(\neg\alpha)\tI$ and
$\Box\neg\alpha\tI$ are interderivable in $\OS$, and an easy induction
shows that on the $\{\wedge,\vee,\neg\}$-fragment of $\Fo$ (embedded via
$\neg\chi:=\chi\to\bot$) the translation $(\cdot)\tI$ agrees, up to
$\OS$-interderivability, with the GMT-style translation of fundamental
logic into $\OS$ from \cite[Thm.~1.1]{HM2026}.
\end{remark}

\subsection{From $\OS$-models to $\sysF$-models}

\begin{definition}\label{def:F1}
For an $\OS$-model $\mathcal M=(X,\between,\le,V)$, let
\[
F_1(\mathcal M)\;:=\;(X,\ \lhd,\ V^{\flat}),
\qquad\text{where }\lhd:={\between}\circ{\le}
\ \text{ and }\ V^{\flat}(p):=\Box_{\le}V(p).
\]
Explicitly, $y\lhd x$ iff there is $u$ with $y\between u$ and $u\le x$.
\end{definition}

The frame component of $F_1$ is exactly the fundamental reduct of
\cite[Def.~3.4 and Def.~3.6]{HM2026}: the relation
$\lhd=\between\circ\le$ is HM's induced openness relation.  

\begin{lemma}\label{lem:F1-frame}
Let $\mathcal M=(X,\between,\le,V)$ be an $\OS$-model and
$\lhd={\between}\circ{\le}$.
\begin{enumerate}
\item $(X,\lhd)$ is reflexive and (prefactoring) pseudo-symmetric.
\item $\FP(c_{\lhd})=\Box_{\le}\bigl[\FP(c_{\between})\bigr]$, and every
$A\in\FP(c_{\lhd})$ also belongs to $\FP(c_{\between})$ and satisfies
$\Box_{\le}A=A$.
\item For all $A,B\in\FP(c_{\lhd})$:
\[
\Diamond_{\rhd}(A\cap B)=\Diamond_{\between}(A\cap B),\qquad
c_{\lhd}(A\cup B)=c_{\between}(A\cup B),
\]
\[
\to_{\lhd}(A,B)\;=\;\Box_{\le}\,\Box_{\between}\bigl(-A\cup
\Diamond_{\between}(A\cap B)\bigr).
\]
\end{enumerate}
\end{lemma}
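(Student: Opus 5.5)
This lemma is essentially the specialization of Lemma~\ref{lem:reduct} to $\rho:={\between}$, so the plan is to check the hypotheses of that lemma and then read off each item, adding only the one extra claim in (2) that Lemma~\ref{lem:reduct} does not state explicitly. Since $\between$ is reflexive and symmetric and $\le$ is a preorder (Definition~\ref{def:os4-frame}), Lemma~\ref{lem:reduct} applies to $\lhd={\between}\circ{\le}$. Item (1) is then Lemma~\ref{lem:reduct}(1) and (3). Prefactoring implies pseudo-symmetry, as noted after Definition~\ref{def:strong-ps}.

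For (2), the identity $\FP(c_{\lhd})=\Box_{\le}[\FP(c_{\between})]$ and the equation $\Box_{\le}A=A$ are Lemma~\ref{lem:reduct}(4) with $c_\rho=c_{\between}$. The new ingredient is that every $A\in\FP(c_{\lhd})$ is also a $\between$-proposition. This is exactly where condition (int) of Definition~\ref{def:os4-frame} enters. We write $A=\Box_{\le}U$ with $U\in\FP(c_{\between})$, and (int) then gives $\Box_{\le}U\in\FP(c_{\between})$. This is the only step that uses anything beyond the general reduct calculus, and it is the point I would be careful to flag.

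For (3), the displayed identities for $\Diamond_{\rhd}(A\cap B)$ and for $\to_{\lhd}(A,B)$ are literally Lemma~\ref{lem:reduct}(6) with $\rho={\between}$. For $c_{\lhd}(A\cup B)$, Lemma~\ref{lem:reduct}(6) only gives $\Box_{\le}c_{\between}(A\cup B)$, so we still need to remove the outer $\Box_{\le}$. By (2), $A$ and $B$ lie in $\FP(c_{\between})$, and by (int) the set $\Box_{\le}c_{\between}(A\cup B)$ is again a $\between$-proposition. Since $\le$ is reflexive, Lemma~\ref{lem:toolbox}(c) gives $\Box_{\le}c_{\between}(A\cup B)\subseteq c_{\between}(A\cup B)$. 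For the reverse inclusion, $A\cup B\subseteq\Box_{\le}(A\cup B)$ holds because $A$ and $B$ are $\Box_{\le}$-fixed (downsets). Monotonicity of $\Box_{\le}$ then gives $\Box_{\le}(A\cup B)\subseteq\Box_{\le}c_{\between}(A\cup B)$, and this set is a $\between$-proposition by (int). Since $c_{\between}(A\cup B)$ is the least $\between$-proposition containing $A\cup B$, we get $c_{\between}(A\cup B)\subseteq\Box_{\le}c_{\between}(A\cup B)$. This minimality argument is the main, if modest, obstacle; everything else is bookkeeping from Lemma~\ref{lem:reduct} and Lemma~\ref{lem:toolbox}.
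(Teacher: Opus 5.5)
Your proposal is correct and follows the paper's proof. You specialize Lemma~\ref{lem:reduct} to $\rho:=\between$, use (int) for the extra claim $\FP(c_{\lhd})\subseteq\FP(c_{\between})$, and remove the outer $\Box_{\le}$ from $c_{\lhd}(A\cup B)=\Box_{\le}c_{\between}(A\cup B)$ by reflexivity of $\le$ in one direction and minimality of $c_{\between}(A\cup B)$ in the other. The only cosmetic difference is that you show $\Box_{\le}c_{\between}(A\cup B)$ is a $\between$-proposition containing $A\cup B$ directly, via (int) and downward closure of $A,B$, whereas the paper gets the same facts from $c_{\lhd}(A\cup B)\in\FP(c_{\lhd})\subseteq\FP(c_{\between})$ and extensivity of $c_{\lhd}$.
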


\begin{proof}
Lemma \ref{lem:reduct}, with $\rho:=\between$, gives (1), the range
identity and $\Box_{\le}$-fixedness in (2), and all the identities in
(3) except the unboxed closure identity.  The only direct use of (int)
is the remaining fixpoint claim: by Lemma \ref{lem:reduct}(4), every
$A\in\FP(c_{\lhd})$ has the form $\Box_{\le}U$ for some
$U\in\FP(c_{\between})$, so (int) gives $A\in\FP(c_{\between})$.

It remains to remove the outer $\Box_{\le}$ from the closure formula in
Lemma \ref{lem:reduct}(6).  The set $c_{\lhd}(A\cup B)$ is in
$\FP(c_{\lhd})$, hence belongs to $\FP(c_{\between})$ by the preceding
paragraph;
as it contains $A\cup B$, minimality of $c_{\between}(A\cup B)$ gives
\[
c_{\between}(A\cup B)\subseteq c_{\lhd}(A\cup B).
\]
Conversely, Lemma \ref{lem:reduct}(6) and reflexivity of $\le$ give
$c_{\lhd}(A\cup B)=\Box_{\le}c_{\between}(A\cup B)
\subseteq c_{\between}(A\cup B)$.
\end{proof}

\begin{theorem}\label{thm:F1}
Let $\mathcal M=(X,\between,\le,V)$ be an $\OS$-model.  Then:
\begin{enumerate}
\item $F_1(\mathcal M)$ is an $\sysF$-model;
\item for all $\varphi\in\Fo$,
$\den{\varphi}_{F_1(\mathcal M)}=\den{\varphi\tI}_{\mathcal M}$.
\end{enumerate}
\end{theorem}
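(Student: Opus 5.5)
Part (1) is a direct assembly of Lemma~\ref{lem:F1-frame}. By Lemma~\ref{lem:F1-frame}(1), the frame $(X,\lhd)$ with $\lhd={\between}\circ{\le}$ is reflexive and pseudo-symmetric, so it is an $\sysF$-frame. It remains to check that $V^{\flat}$ is admissible. For each $p$ we have $V(p)\in\FP(c_{\between})$, and therefore $V^{\flat}(p)=\Box_{\le}V(p)$ lies in $\Box_{\le}[\FP(c_{\between})]=\FP(c_{\lhd})$ by Lemma~\ref{lem:F1-frame}(2).

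Part (2) is proved by induction on $\varphi$. Write $A=\den{\alpha}_{F_1(\mathcal M)}$ and $B=\den{\beta}_{F_1(\mathcal M)}$. The induction hypothesis gives $A=\den{\alpha\tI}_{\mathcal M}$ and $B=\den{\beta\tI}_{\mathcal M}$. The observation after Definition~\ref{def:model} shows that truth sets in $F_1(\mathcal M)$ are propositions, so $A,B\in\FP(c_{\lhd})$. This is what lets us apply Lemma~\ref{lem:F1-frame}(3).

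The cases go as follows.
\begin{enumerate}
\item Atoms: $\den{p}=V^{\flat}(p)=\Box_{\le}V(p)=\den{\Box p}=\den{p\tI}$.
\item $\bot$: both sides are empty. On the fundamental side this holds because $\lhd$ is reflexive; on the modal side, $\den{\bot}=\neg_{\between}X=\varnothing$ because $\between$ is reflexive.
\item $\top$: both sides are $X$. The translation leaves $\top$ implicit; if $\top\tI$ is needed, I would take it to be $\top$.
\item $\wedge$: immediate, since both sides are intersections.
\item $\vee$: $\den{\alpha\vee\beta}=c_{\lhd}(A\cup B)=c_{\between}(A\cup B)=\den{\alpha\tI\vee\beta\tI}$, using the closure identity of Lemma~\ref{lem:F1-frame}(3).
\item $\to$: this is the only substantive case. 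By Lemma~\ref{lem:F1-frame}(3),
\[
\to_{\lhd}(A,B)=\Box_{\le}\Box_{\between}\bigl(-A\cup\Diamond_{\between}(A\cap B)\bigr).
\]
By Lemma~\ref{lem:sasaki}, with $\den{\alpha\tI}=A$ and $\den{\beta\tI}=B$, the inner set $\Box_{\between}\bigl(-A\cup\Diamond_{\between}(A\cap B)\bigr)$ is exactly $\den{\alpha\tI\shook\beta\tI}_{\mathcal M}$. Applying the semantic clause for $\Box$ then gives $\den{\Box(\alpha\tI\shook\beta\tI)}=\den{(\alpha\to\beta)\tI}$.
\end{enumerate}

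I do not expect a genuine obstacle. The real work, namely transferring the preconditional clause and removing the outer $\Box_{\le}$ from the join, was already done in Lemmas~\ref{lem:reduct} and~\ref{lem:F1-frame}. The one point needing care is the order of the argument. Lemma~\ref{lem:F1-frame}(3) only applies to arguments in $\FP(c_{\lhd})$, so part (1) must be established first: admissibility of $V^{\flat}$ is what makes every truth set of $F_1(\mathcal M)$ a proposition.
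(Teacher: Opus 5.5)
Your proposal is correct and follows the paper's proof essentially step for step: part (1) comes from Lemma~\ref{lem:F1-frame}(1),(2), and part (2) is an induction that uses Lemma~\ref{lem:F1-frame}(3) for $\vee$ and $\to$ and Lemma~\ref{lem:sasaki} for the conditional. Your explicit $\top$ case (taking $\top\tI:=\top$) also fills a small gap, since Definition~\ref{def:transI} gives no clause for $\top$.
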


\begin{proof}
(1) The frame $(X,\lhd)$ is reflexive and pseudo-symmetric by Lemma
\ref{lem:F1-frame}(1), and $V^{\flat}(p)=\Box_{\le}V(p)\in
\Box_{\le}[\FP(c_{\between})]=\FP(c_{\lhd})$ by Lemma
\ref{lem:F1-frame}(2), since $V(p)\in\FP(c_{\between})$.

(2) By induction on $\varphi$.  Throughout, let
$A:=\den{\alpha}_{F_1(\mathcal M)}$,
$B:=\den{\beta}_{F_1(\mathcal M)}$ and
$A':=\den{\alpha\tI}_{\mathcal M}$,
$B':=\den{\beta\tI}_{\mathcal M}$; the inductive hypotheses are $A=A'$
and $B=B'$, and note that $A,B\in\FP(c_{\lhd})$ since truth sets in a
model are propositions.

\emph{Base cases.}  $\den{p}_{F_1(\mathcal M)}=V^{\flat}(p)
=\Box_{\le}V(p)=\den{\Box p}_{\mathcal M}=\den{p\tI}_{\mathcal M}$.
For $\bot$: $\den{\bot}_{F_1(\mathcal M)}$ is the set of
$\lhd$-absurd states, which is empty since $\lhd$ is reflexive; and
$\den{\bot\tI}_{\mathcal M}=\den{\neg\top}_{\mathcal M}
=\neg_{\between}X=\varnothing$ by reflexivity of $\between$.

\emph{Conjunction.}  Both semantics interpret $\wedge$ by intersection,
so the claim follows from the inductive hypotheses.

\emph{Disjunction.}  By Lemma \ref{lem:F1-frame}(3) and the inductive
hypotheses,
\[
\den{\alpha\vee\beta}_{F_1(\mathcal M)}
=c_{\lhd}(A\cup B)
=c_{\between}(A'\cup B')
=\den{\alpha\tI\vee\beta\tI}_{\mathcal M}
=\den{(\alpha\vee\beta)\tI}_{\mathcal M}.
\]

\emph{Conditional.}  By Lemma \ref{lem:F1-frame}(3), the inductive
hypotheses, and Lemma \ref{lem:sasaki},
\[
\den{\alpha\to\beta}_{F_1(\mathcal M)}
=\to_{\lhd}(A,B)
=\Box_{\le}\Box_{\between}\bigl(-A'\cup\Diamond_{\between}(A'\cap B')\bigr)
=\Box_{\le}\den{\alpha\tI\shook\beta\tI}_{\mathcal M}
=\den{(\alpha\to\beta)\tI}_{\mathcal M}.\qedhere
\]
\end{proof}


\subsection{From $\sysF$-models to $\OS$-models}

\begin{definition}\label{def:G1}
For an $\sysF$-model $\mathcal N=(X,\lhd,V)$, let
\[
G_1(\mathcal N)\;:=\;(X,\ \lhds,\ \pre,\ V),
\]
i.e., the compatibility relation is the symmetric kernel of the openness
relation and the S4 preorder is prerefinement. When $(X,\lhd)$ is
balanced, this frame component is exactly the orthomodal companion of
\cite[Def.~3.13 and Def.~3.15]{HM2026}, with local notation $\lhds$ for
HM's symmetric kernel and $\pre$ for HM's prerefinement relation.
\end{definition}

\begin{theorem}\label{thm:G1}
Let $\mathcal N=(X,\lhd,V)$ be an $\sysF$-model that is balanced.  Then:
\begin{enumerate}
\item $G_1(\mathcal N)$ is an $\OS$-model;
\item $F_1(G_1(\mathcal N))=\mathcal N$;
\item for all $\varphi\in\Fo$,
$\den{\varphi}_{\mathcal N}=\den{\varphi\tI}_{G_1(\mathcal N)}$.
\end{enumerate}
\end{theorem}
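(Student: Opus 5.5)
The argument is organised around part (2); part (3) then follows at once from Theorem~\ref{thm:F1}. Given $F_1(G_1(\mathcal N))=\mathcal N$ and the fact that $G_1(\mathcal N)$ is an $\OS$-model, Theorem~\ref{thm:F1}(2) applied to $\mathcal M:=G_1(\mathcal N)$ yields $\den{\varphi}_{\mathcal N}=\den{\varphi}_{F_1(G_1(\mathcal N))}=\den{\varphi\tI}_{G_1(\mathcal N)}$. So the real work lies in (1) and (2), and I will carry them out in the order: relation identity, frame conditions, valuation.

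\emph{Step 1 (relation identity).} Prefactoring, by Proposition~\ref{prop:strongps-char}(1), gives ${\lhd}={\lhds}\circ{\pre}$. This is exactly the relation that $F_1$ builds from $(X,\lhds,\pre)$, since $F_1$ composes compatibility with the preorder. Here $\lhds$ is reflexive because $\mathcal N$ is reflexive, and it is symmetric by construction. By Lemma~\ref{lem:prep}, $\pre$ is a preorder. Hence the frame component of $G_1(\mathcal N)$ has the right shape, and the frame component of $F_1(G_1(\mathcal N))$ is $(X,\lhd)$.

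\emph{Step 2 (condition (int); the main obstacle).} I must show that $\Box_{\pre}$ maps $\FP(c_{\lhds})$ into itself. My first attempt is to use postfactoring, which gives ${\lhd}=({\post})^{-1}\circ{\lhds}$ and hence $\Box_{\lhd}=\Box_{\lhds}\Box_{(\post)^{-1}}$ (Proposition~\ref{prop:strongps-char}(3)). Let $U=\Box_{\lhds}Z$. By Lemma~\ref{lem:toolbox}(g), $\Box_{\pre}U=\Box_{\pre}\Box_{\lhds}Z=\Box_{\lhd}Z$, and by postfactoring this equals $\Box_{\lhds}(\Box_{(\post)^{-1}}Z)$. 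That set lies in $\Box_{\lhds}[\wp(X)]=\FP(c_{\lhds})$, which is what (int) requires. I expect the delicate part to be confirming that the two factorisations are applied to the same $\Box_{\lhd}$ without any direction or convention mismatch. It is worth recording the consequence as well: $\FP(c_{\lhd})=\Box_{\lhd}[\wp(X)]\subseteq\FP(c_{\lhds})$.

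\emph{Step 3 (valuation, and completing (1) and (2)).} Every $V(p)$ lies in $\FP(c_{\lhd})$, so Step~2 places it in $\FP(c_{\lhds})$ as well. Thus $G_1(\mathcal N)$ is an $\OS$-model, which proves (1). For (2), the valuation of $F_1(G_1(\mathcal N))$ is $\Box_{\pre}V(p)$, and I need this to equal $V(p)$. Write $V(p)=\Box_{\lhd}Z=\Box_{\pre}\Box_{\lhds}Z$. Idempotence of $\Box_{\pre}$ for a preorder (Lemma~\ref{lem:toolbox}(d)) then gives $\Box_{\pre}V(p)=V(p)$. Together with Step~1, this shows $F_1(G_1(\mathcal N))=\mathcal N$, and part (3) follows as explained at the start.
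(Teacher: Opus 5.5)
Your proof is correct. Parts (2) and (3) match the paper's argument: prefactoring gives ${\lhd}={\lhds}\circ{\pre}$, writing $V(p)=\Box_{\pre}\Box_{\lhds}Z$ and using idempotence of $\Box_{\pre}$ recovers the valuation, and (3) then follows from Theorem~\ref{thm:F1}.

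The difference is in part (1). The paper does not verify (int) or the inclusion $\FP(c_{\lhd})\subseteq\FP(c_{\lhds})$ itself; it cites Lemmas~3.14 and~3.16(1) of \cite{HM2026}. You derive both directly from the two factorisations in Proposition~\ref{prop:strongps-char}, namely $\Box_{\pre}\Box_{\lhds}Z=\Box_{\lhd}Z=\Box_{\lhds}\Box_{(\post)^{-1}}Z$. The direction conventions do check out: with ${\lhd}=({\post})^{-1}\circ{\lhds}$, Lemma~\ref{lem:toolbox}(a) gives exactly $\Box_{\lhd}=\Box_{\lhds}\Box_{(\post)^{-1}}$.

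Your route is self-contained and shows which half of balancedness does what:
\begin{itemize}
\item prefactoring alone gives the relation identity and part (2);
\item postfactoring alone gives $\FP(c_{\lhd})\subseteq\FP(c_{\lhds})$, so $V$ is a legitimate $\OS$-valuation;
\item (int) needs both.
\end{itemize}
The paper's citation buys brevity and ties $G_1$ to Holliday--Massas's orthomodal companion. Your computation shows that nothing beyond the paper's own Proposition~\ref{prop:strongps-char} is needed.

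One small correction: in Step~2, the equality $\Box_{\pre}\Box_{\lhds}Z=\Box_{\lhd}Z$ comes from prefactoring (Proposition~\ref{prop:strongps-char}(1)), not from Lemma~\ref{lem:toolbox}(g). The latter only supplies the representation $U=\Box_{\lhds}Z$ of an arbitrary member of $\FP(c_{\lhds})$.
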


\begin{proof}
(1) Since $(X,\lhd)$ is balanced, \cite[Lem.~3.14]{HM2026} shows that
$(X,\lhds,\pre)$ is an $\OS$-frame (using the equivalent formulation
(int) in Definition \ref{def:os4-frame}).  Moreover,
\cite[Lem.~3.16(1)]{HM2026} gives
$\FP(c_{\lhd})\subseteq\FP(c_{\lhds})$, so $V$ is a legitimate
$\OS$-valuation.

(2) For the frames, the openness relation of
$F_1(G_1(\mathcal N))$ is ${\lhds}\circ{\pre}$, which equals $\lhd$ by
prefactoring (Proposition \ref{prop:strongps-char}(1)).
For the valuations, choose $Z\subseteq X$ with $V(p)=\Box_{\lhd}Z$ by
Lemma \ref{lem:toolbox}(g).  Proposition \ref{prop:strongps-char}(1) gives
$V(p)=\Box_{\pre}\Box_{\lhds}Z$, so Lemma \ref{lem:toolbox}(d) yields
\[
\Box_{\pre}V(p)=\Box_{\pre}\Box_{\pre}\Box_{\lhds}Z
=\Box_{\pre}\Box_{\lhds}Z=V(p),
\]
so the valuation of $F_1(G_1(\mathcal N))$ is again $V$.

(3) Applying Theorem \ref{thm:F1} to the $\OS$-model
$G_1(\mathcal N)$ yields
$\den{\varphi}_{F_1(G_1(\mathcal N))}=\den{\varphi\tI}_{G_1(\mathcal N)}$
for all $\varphi\in\Fo$; by (2) the left-hand side is
$\den{\varphi}_{\mathcal N}$, since truth sets are determined by the
frame and the valuation.
\end{proof}

\subsection{The main theorem for $\OS$}

\begin{theorem}\label{thm:main-os4}
For all $\varphi,\psi\in\Fo$:
\[
\varphi\vdash_{\sysF}\psi
\iff
\varphi\tI\vdash_{\OS}\psi\tI .
\]
\end{theorem}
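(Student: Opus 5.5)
The plan is to assemble the two directions from the semantic results already in place: soundness and completeness on the fundamental side, and soundness and completeness of $\OS$.

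For the left-to-right direction (\emph{faithfulness of derivability}), suppose $\varphi\vdash_{\sysF}\psi$. By Theorem~\ref{thm:os4-complete} it suffices to show $\den{\varphi\tI}_{\mathcal M}\subseteq\den{\psi\tI}_{\mathcal M}$ for every $\OS$-model $\mathcal M$. Given such $\mathcal M$, form $F_1(\mathcal M)$. By Theorem~\ref{thm:F1}(1) it is an $\sysF$-model, that is, a model over a reflexive pseudo-symmetric frame, so it lies in $\mathsf{C}_{\SysF}$. Soundness (Theorem~\ref{thm:soundness}(2)) then gives $\den{\varphi}_{F_1(\mathcal M)}\subseteq\den{\psi}_{F_1(\mathcal M)}$. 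Theorem~\ref{thm:F1}(2) rewrites both sides as $\den{\varphi\tI}_{\mathcal M}$ and $\den{\psi\tI}_{\mathcal M}$, which is what was needed.

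For the right-to-left direction, argue contrapositively. Suppose $\varphi\nvdash_{\sysF}\psi$. Corollary~\ref{cor:strong-countermodel} supplies an $\sysF$-model $\mathcal N=(W,\lhd,V)$ that is strongly and converse-strongly factoring, hence balanced, together with a point $w\in\den{\varphi}_{\mathcal N}\setminus\den{\psi}_{\mathcal N}$. Since $\mathcal N$ is balanced, Theorem~\ref{thm:G1} applies. First, $G_1(\mathcal N)$ is an $\OS$-model. Second, $\den{\chi}_{\mathcal N}=\den{\chi\tI}_{G_1(\mathcal N)}$ for all $\chi\in\Fo$. Hence $w\in\den{\varphi\tI}_{G_1(\mathcal N)}$ and $w\notin\den{\psi\tI}_{G_1(\mathcal N)}$. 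By the soundness half of Theorem~\ref{thm:os4-complete}, $\varphi\tI\nvdash_{\OS}\psi\tI$.

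The real work sits in earlier results: the transfer of the conditional clause in Theorem~\ref{thm:F1}, and the factoring of the canonical model in Theorem~\ref{thm:canonical-strongps}. The only point I expect to need care is that the countermodel from Corollary~\ref{cor:strong-countermodel} meets \emph{all} hypotheses of Theorem~\ref{thm:G1}. Two facts are required there: balancedness, which follows because strong factoring implies prefactoring and converse-strong factoring implies postfactoring, and the fact that the valuation takes values in $\FP(c_{\lhd})$. Given those, the cited lemma \cite[Lem.~3.14, 3.16]{HM2026} legitimately yields an $\OS$-frame with an admissible valuation. I would check explicitly that the truth sets used agree with the proposition-valued semantics of Definition~\ref{def:model}, since $\den{\bot}=\varnothing$ on reflexive frames. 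With that, the proof is a two-line assembly in each direction.
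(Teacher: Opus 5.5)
Your proposal is correct and is essentially the paper's own proof. The forward direction passes through $F_1(\mathcal M)$ using Theorem~\ref{thm:F1}, soundness of $\sysF$, and completeness of $\OS$; the converse uses the balanced canonical countermodel of Corollary~\ref{cor:strong-countermodel}, Theorem~\ref{thm:G1}, and soundness of $\OS$, with the hypothesis checks you flag already discharged in those cited results.
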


\begin{proof}
($\Rightarrow$)  Suppose $\varphi\vdash_{\sysF}\psi$, and let
$\mathcal M$ be any $\OS$-model.  By Theorem \ref{thm:F1}(1),
$F_1(\mathcal M)$ is an $\sysF$-model, so by the soundness of $\sysF$
with respect to $\sysF$-models (Theorem \ref{thm:soundness}),
$\den{\varphi}_{F_1(\mathcal M)}\subseteq\den{\psi}_{F_1(\mathcal M)}$.
By Theorem \ref{thm:F1}(2), this says
$\den{\varphi\tI}_{\mathcal M}\subseteq\den{\psi\tI}_{\mathcal M}$.
Since $\mathcal M$ was arbitrary, the completeness of $\OS$ (Theorem
\ref{thm:os4-complete}) yields $\varphi\tI\vdash_{\OS}\psi\tI$.

($\Leftarrow$)  Suppose $\varphi\nvdash_{\sysF}\psi$.  By Corollary
\ref{cor:strong-countermodel}, there are an $\sysF$-model $\mathcal M$
that is balanced and a point $w$ with
$w\in\den{\varphi}_{\mathcal M}$ and
$w\notin\den{\psi}_{\mathcal M}$.  By Theorem \ref{thm:G1},
$G_1(\mathcal M)$ is an $\OS$-model with
$\den{\varphi\tI}_{G_1(\mathcal M)}=\den{\varphi}_{\mathcal M}\ni w$ and
$\den{\psi\tI}_{G_1(\mathcal M)}=\den{\psi}_{\mathcal M}\not\ni w$.
Hence $\den{\varphi\tI}\not\subseteq\den{\psi\tI}$ in some $\OS$-model,
so $\varphi\tI\nvdash_{\OS}\psi\tI$ by the left-to-right
direction of Theorem~\ref{thm:os4-complete}.
\end{proof}

\begin{corollary}\label{cor:main-os4-sets}
For any $\Gamma\subseteq\Fo$ and $\varphi\in\Fo$:
$\Gamma\vdash_{\sysF}\varphi$ iff
$(\gamma_1\wedge\dots\wedge\gamma_n)\tI\vdash_{\OS}\varphi\tI$ for some
$\gamma_1,\dots,\gamma_n\in\Gamma$ (where the empty conjunction is
$\top$).
\end{corollary}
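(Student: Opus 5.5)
The corollary reduces to Theorem~\ref{thm:main-os4} via finitarity. By Remark~\ref{rem:mon-fin}, $\Gamma\vdash_{\sysF}\varphi$ holds iff $\gamma_1\wedge\dots\wedge\gamma_n\vdash_{\sysF}\varphi$ for some $\gamma_1,\dots,\gamma_n\in\Gamma$, where the case $n=0$ is read as $\top\vdash_{\sysF}\varphi$. Theorem~\ref{thm:main-os4} then converts each single-premise statement $\gamma_1\wedge\dots\wedge\gamma_n\vdash_{\sysF}\varphi$ into $(\gamma_1\wedge\dots\wedge\gamma_n)\tI\vdash_{\OS}\varphi\tI$. Chaining the two equivalences gives the corollary for $n\geq 1$.

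The only point requiring care is the empty conjunction. Definition~\ref{def:transI} lists no clause for $\top$, so I first need to fix $\top\tI$. The natural choice is $\top\tI:=\top$. This agrees with the semantics: in an $\OS$-model $\den{\top}=X$, and in $F_1(\mathcal M)$ also $\den{\top}=X$. So the base case $\top$ goes through in the inductions of Theorem~\ref{thm:F1}(2) and Theorem~\ref{thm:G1}(3), and Theorem~\ref{thm:main-os4} then covers formulas containing $\top$. If the paper instead intends $\top\tI=\Box\top$, this is $\OS$-interderivable with $\top$ by (o5) and (o6), so nothing changes.

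With $\top\tI$ fixed, the case $n=0$ reads $\top\vdash_{\sysF}\varphi$ iff $\top\tI\vdash_{\OS}\varphi\tI$. This is again an instance of Theorem~\ref{thm:main-os4}, and the remark's stipulation that $\top\vdash\varphi$ is equivalent to $\varnothing\vdash\varphi$ matches the corollary's convention.

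I expect no real obstacle beyond this bookkeeping. One harmless point is parenthesization: the conjunction $\gamma_1\wedge\dots\wedge\gamma_n$ should be bracketed in a fixed way. Since $\tI$ commutes with $\wedge$, any bracketing gives $\OS$-interderivable translations by (o2), so the choice does not matter.
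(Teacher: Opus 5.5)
Your proposal is correct and follows the paper's proof, which is just finitarity (Remark~\ref{rem:mon-fin}) combined with the single-premise Theorem~\ref{thm:main-os4}. You are also right that Definition~\ref{def:transI} has no clause for $\top$. Your fix $\top\tI:=\top$ (or $\Box\top$) supplies the missing base case in Theorem~\ref{thm:F1}(2) and hence covers the $n=0$ case; the paper leaves this detail implicit.
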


\begin{proof}
Immediate from Theorem \ref{thm:main-os4} and the finitarity of
$\vdash_{\sysF}$ (Remark \ref{rem:mon-fin}).
\end{proof}

\begin{corollary}\label{cor:hm11}
For all $\varphi,\psi$ in the $\{\wedge,\vee,\neg\}$-fragment
(embedded into $\Fo$ via $\neg\chi:=\chi\to\bot$):
\[
\varphi\vdash_{\mathcal F}\psi
\iff
\varphi\tI\vdash_{\OS}\psi\tI ,
\]
where $\vdash_{\mathcal F}$ is fundamental logic.  In view of Remark
\ref{rem:transI-neg}, this recovers Theorem 1.1 of \cite{HM2026}.
\end{corollary}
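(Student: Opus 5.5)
The plan is to derive Corollary~\ref{cor:hm11} directly from Theorem~\ref{thm:main-os4} together with the conservativity result, Corollary~\ref{cor:conservative}. Nothing new about frames is needed. Let $\varphi,\psi$ lie in the $\{\wedge,\vee,\neg\}$-fragment, regarded as formulas of $\Fo$ via $\neg\chi:=\chi\to\bot$.

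The first step is to apply Corollary~\ref{cor:conservative}, which gives $\varphi\vdash_{\mathcal F}\psi$ iff $\varphi\vF\psi$. Its right-to-left direction comes from Proposition~\ref{prop:F-extends-FL}. Its left-to-right direction comes from completeness of fundamental logic over reflexive pseudo-symmetric frames, combined with Theorem~\ref{thm:soundness}. The second step is to apply Theorem~\ref{thm:main-os4} to the same pair $\varphi,\psi$, which is legitimate because they are formulas of $\Fo$. This yields $\varphi\vF\psi$ iff $\varphi\tI\vdash_{\OS}\psi\tI$. Chaining the two equivalences gives the displayed biconditional.

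For the final sentence (recovering \cite[Thm.~1.1]{HM2026}), I would compare $(\cdot)\tI$ on the fragment with HM's GMT translation $(\cdot)^{\mathrm{HM}}$. By Remark~\ref{rem:transI-neg}, $(\neg\alpha)\tI$ is $\OS$-interderivable with $\Box\neg\alpha\tI$, which is HM's negation clause. I would then run an induction on fragment formulas showing $\chi\tI\dashv\vdash_{\OS}\chi^{\mathrm{HM}}$. Atoms and $\wedge$ are identical in the two translations. The $\vee$ case needs congruence of $\vee$ under interderivability in $\OS$, which follows from (o3). The $\neg$ case needs the Remark plus monotonicity of $\Box$ and antitonicity of $\neg$ from (o4)--(o5). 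Given this, $\varphi\tI\vdash_{\OS}\psi\tI$ iff $\varphi^{\mathrm{HM}}\vdash_{\OS}\psi^{\mathrm{HM}}$ by transitivity (o1), so the corollary restates HM's theorem.

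The only delicate point is making sure HM's clauses are exactly $p\mapsto\Box p$ and $\neg\alpha\mapsto\Box\neg\alpha^{\mathrm{HM}}$, with $\wedge,\vee$ translated homomorphically. If HM's $\vee$ clause differs, for instance by carrying an outer $\Box$, I would check interderivability semantically using Lemma~\ref{lem:F1-frame}(3), where $c_{\lhd}(A\cup B)=c_{\between}(A\cup B)$ is already $\Box_{\le}$-fixed, and then conclude by Theorem~\ref{thm:os4-complete}.
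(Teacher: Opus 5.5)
Your proposal is correct and follows the paper's proof: the paper obtains the displayed biconditional by chaining Corollary~\ref{cor:conservative} with Theorem~\ref{thm:main-os4}, and for the ``recovers Theorem 1.1 of \cite{HM2026}'' claim it simply points to Remark~\ref{rem:transI-neg}, whose inductive interderivability argument you spell out in more detail. One slip, which does not affect the argument: you attach the direction labels of Corollary~\ref{cor:conservative} as the paper orders that biconditional, so relative to your own ordering ($\varphi\vdash_{\mathcal F}\psi$ iff $\varphi\vF\psi$) they are swapped, since Proposition~\ref{prop:F-extends-FL} gives left-to-right and the semantic argument gives right-to-left.
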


\begin{proof}
Combine the conservativity of $\sysF$ over fundamental logic
(Corollary \ref{cor:conservative}) with Theorem \ref{thm:main-os4}.
\end{proof}

\section{A full and faithful translation into intuitionistic KTB}
\label{sec:itb}

\subsection{The logic $\ITB$ and its relational semantics}

Let $\Lbd$ be the language given by the grammar
\[
\varphi ::= p \mid \bot \mid (\varphi\wedge\varphi) \mid
(\varphi\vee\varphi) \mid (\varphi\to\varphi) \mid \Box\varphi \mid
\Diamond\varphi ,
\]
with $\neg\varphi:=\varphi\to\bot$.  In this target intuitionistic
language only, we use $\top$ as the standard abbreviation for
$\bot\to\bot$.  Here the target conditional is the intuitionistic
implication, and---as in
intuitionistic modal logic in the style of Fischer Servi
\cite{FS1984}---$\Box$ and $\Diamond$ are both primitive and
not interdefinable.

\begin{definition}\label{def:itb}
Intuitionistic KTB, denoted $\ITB$, is the smallest binary relation
${\vdash_{\ITB}}\subseteq\Lbd\times\Lbd$ satisfying, for all
$\varphi,\psi,\chi,\alpha,\beta\in\Lbd$:
\begin{enumerate}
\item[(i1)] $\varphi\vdash\varphi$;\quad if $\varphi\vdash\psi$ and
$\psi\vdash\chi$, then $\varphi\vdash\chi$;\quad $\bot\vdash\varphi$;
\item[(i2)] $\varphi\wedge\psi\vdash\varphi$;\quad
$\varphi\wedge\psi\vdash\psi$;\quad if $\varphi\vdash\psi$ and
$\varphi\vdash\chi$, then $\varphi\vdash\psi\wedge\chi$;
\item[(i3)] $\varphi\vdash\varphi\vee\psi$;\quad
$\psi\vdash\varphi\vee\psi$;\quad if $\varphi\wedge\alpha\vdash\chi$ and
$\varphi\wedge\beta\vdash\chi$, then
$\varphi\wedge(\alpha\vee\beta)\vdash\chi$;
\item[(i4)] (residuation) $\varphi\wedge\psi\vdash\chi$ if and only if
$\varphi\vdash\psi\to\chi$;
\item[(i5)] if $\varphi\vdash\psi$, then $\Box\varphi\vdash\Box\psi$ and
$\Diamond\varphi\vdash\Diamond\psi$;\quad
$\Box\varphi\wedge\Box\psi\vdash\Box(\varphi\wedge\psi)$;\quad
$\top\vdash\Box\top$;\quad
$\Diamond(\varphi\vee\psi)\vdash\Diamond\varphi\vee\Diamond\psi$;\quad
$\Diamond\bot\vdash\bot$;
\item[(i6)] (Fischer Servi)
$\Diamond(\varphi\to\psi)\vdash\Box\varphi\to\Diamond\psi$;\quad
$(\Diamond\varphi\to\Box\psi)\vdash\Box(\varphi\to\psi)$;
\item[(i7)] (T) $\Box\varphi\vdash\varphi$;\quad
$\varphi\vdash\Diamond\varphi$;
\item[(i8)] (B) $\varphi\vdash\Box\Diamond\varphi$;\quad
$\Diamond\Box\varphi\vdash\varphi$.
\end{enumerate}
\end{definition}

\begin{definition}\label{def:fstb-frame}
An \emph{FSTB-frame} is a triple $(X,\le,\sim)$ where $X$ is a nonempty
set, $\le$ is a preorder on $X$, $\sim$ is a reflexive symmetric
relation on $X$, and the following interaction condition holds:
\begin{itemize}
\item[(fs)] for every $Z\subseteq X$,
$\Box_{\le}\Box_{\sim}Z\subseteq\Box_{\sim}\Box_{\le}Z$;
equivalently (by Lemma \ref{lem:toolbox}(a),(b)):
whenever $y\le u$ and $u\sim x$, there is $v$ with $y\sim v$ and
$v\le x$.
\end{itemize}
Call $U\subseteq X$ \emph{persistent} if $y\le x\in U$ implies $y\in U$;
equivalently, $\Box_{\le}U=U$; equivalently,
$\Diamond_{\le^{-1}}U=U$.  An \emph{FSTB-model}
$\mathcal M=(X,\le,\sim,V)$ adds a valuation $V$ taking values in the
persistent subsets of $X$.  Truth sets are defined by
\[
\den{\bot}=\varnothing,\quad \den{p}=V(p),\quad
\den{\varphi\wedge\psi}=\den{\varphi}\cap\den{\psi},\quad
\den{\varphi\vee\psi}=\den{\varphi}\cup\den{\psi},
\]
\[
\den{\varphi\to\psi}=\Box_{\le}\bigl(-\den{\varphi}\cup\den{\psi}\bigr),
\qquad
\den{\Diamond\varphi}=\Diamond_{\sim}\den{\varphi},
\qquad
\den{\Box\varphi}=\Box_{\le}\Box_{\sim}\den{\varphi}.
\]
\end{definition}

As before, $y\le x$ means that $y$ is a refinement/predecessor of $x$. Thus persistence is persistence under refinement, and $\to$ and $\Box$ quantify over refinements in the same predecessor-style sense.
Condition (fs) is the Fischer--Servi interaction condition in
\cite[Def.~4.3]{HM2026}.  Since $\sim$ is symmetric, it is equivalent
to the second interaction condition displayed there; we use the single
operator inclusion above as the frame condition.

\begin{lemma}\label{lem:itb-persistent}
In every FSTB-model, $\den{\varphi}$ is persistent for all
$\varphi\in\Lbd$.
\end{lemma}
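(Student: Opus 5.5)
We argue by structural induction on $\varphi$, showing $\Box_{\le}\den{\varphi}=\den{\varphi}$, equivalently that $\den{\varphi}$ is a downset of $\le$. The inclusion $\Box_{\le}\den{\varphi}\subseteq\den{\varphi}$ always holds by reflexivity of $\le$ (Lemma~\ref{lem:toolbox}(c)), so in each case only downward closure needs checking.

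The base and propositional cases are routine. $\den{\bot}=\varnothing$ is trivially persistent, and $\den{p}=V(p)$ is persistent by the definition of an FSTB-model. Intersections and unions of downsets are downsets, which handles $\wedge$ and $\vee$. The clauses for $\to$ and $\Box$ both have the outer form $\Box_{\le}Z$ for some $Z\subseteq X$. Since $\le$ is a preorder, Lemma~\ref{lem:toolbox}(d) gives $\Box_{\le}\Box_{\le}Z=\Box_{\le}Z$, so these truth sets are persistent regardless of the induction hypothesis.

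The only genuine case is $\Diamond$, where $\den{\Diamond\varphi}=\Diamond_{\sim}\den{\varphi}$ and the interaction condition (fs) must be used. Suppose $y\le x$ and $x\in\Diamond_{\sim}\den{\varphi}$, so there is $w\sim x$ with $w\in\den{\varphi}$. We need some $v\sim y$ with $v\in\den{\varphi}$. Using symmetry of $\sim$, we have $x\sim w$, so $y\le x$ and $x\sim w$. The relational form of (fs), instantiated with $u:=x$ and target $w$, yields $v$ with $y\sim v$ and $v\le w$. By the induction hypothesis $\den{\varphi}$ is persistent, so $v\in\den{\varphi}$. Then $v\sim y$ by symmetry, hence $y\in\Diamond_{\sim}\den{\varphi}$.

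This $\Diamond$ step is the main point to get right. The care needed is in orienting (fs) correctly: its stated form is "whenever $y\le u$ and $u\sim x$, there is $v$ with $y\sim v$ and $v\le x$". Here we must match the paper's variables as $u:=x$ and its $x:=w$. Should the orientation turn out to differ, we would instead derive the needed relational form from the operator inclusion $\Box_{\le}\Box_{\sim}Z\subseteq\Box_{\sim}\Box_{\le}Z$. We would do this by dualizing to $\Diamond_{\sim}\Diamond_{\le}Z\subseteq\Diamond_{\le}\Diamond_{\sim}Z$ and taking $Z=\{w\}$-style singletons, as in the converse part of Lemma~\ref{lem:toolbox}(b).
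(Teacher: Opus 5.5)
Your proof is correct and matches the paper's argument essentially verbatim: the same induction, with the $\to$ and $\Box$ clauses handled by their outer $\Box_{\le}$ via Lemma~\ref{lem:toolbox}(d), and the $\Diamond$ case handled by applying the relational form of (fs) to $y\le x$ and $x\sim w$, then using persistence of $\den{\varphi}$ and symmetry of $\sim$. Your orientation of (fs) is the right one: by Lemma~\ref{lem:toolbox}(a),(b), the operator inclusion unfolds to ${\le}\circ{\sim}\subseteq{\sim}\circ{\le}$, so the contingency in your last paragraph is not needed.
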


\begin{proof}
By induction on $\varphi$.  Persistent sets contain $\varnothing$ and
are closed under $\cap$ and $\cup$; sets of the form $\Box_{\le}W$ are
persistent by Lemma \ref{lem:toolbox}(d), which covers the $\to$ and
$\Box$ clauses.  For $\Diamond$: suppose $U$ is persistent,
$x\in\Diamond_{\sim}U$, and $z\le x$; pick $y\sim x$ with $y\in U$.
Applying (fs) to $z\le x$ and $x\sim y$ gives $v$ with $z\sim v$ and
$v\le y$; then $v\in U$ by persistence, so $z\in\Diamond_{\sim}U$.
\end{proof}

\begin{theorem}[{\cite[Lem.~4.21]{HM2026}}]\label{thm:itb-complete}
For all $\varphi,\psi\in\Lbd$: $\varphi\vdash_{\ITB}\psi$ if and only if
$\den{\varphi}\subseteq\den{\psi}$ in every FSTB-model.
\end{theorem}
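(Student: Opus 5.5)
Soundness and completeness of $\ITB$ with respect to FSTB-models is a standard two-part argument: a routine soundness check, followed by a canonical-model construction in the Fischer Servi style, adapted to the (T) and (B) axioms. Since the statement is attributed to \cite[Lem.~4.21]{HM2026}, the plan is to follow their route and verify that our frame presentation, with the single interaction condition (fs) and the predecessor-style conventions, matches theirs.

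\emph{Soundness.} We would argue by induction on the generation of $\vdash_{\ITB}$, showing that each rule preserves the relation ``$\den{\varphi}\subseteq\den{\psi}$ in every FSTB-model.'' Lemma~\ref{lem:itb-persistent} ensures that all truth sets are persistent. Given that, the rules (i1)--(i4) are the usual Heyting-algebra facts for the persistent sets of a preorder: residuation holds because $\Box_{\le}(-U\cup V)$ is the largest persistent set $W$ with $W\cap U\subseteq V$. The rules (i5) follow from Lemma~\ref{lem:toolbox}(f) and monotonicity, together with $\Diamond_\sim\varnothing=\varnothing$.

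For (i7), use reflexivity of $\le$ and $\sim$ via Lemma~\ref{lem:toolbox}(c). For (i8), use symmetry of $\sim$. For instance, $U\subseteq\Box_\le\Box_\sim\Diamond_\sim U$ holds by Lemma~\ref{lem:toolbox}(e), since $U$ is persistent and $\Diamond_\sim U$ is persistent. Likewise, $\Diamond_\sim\Box_\le\Box_\sim U\subseteq\Diamond_\sim\Box_\sim U\subseteq U$.

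The two Fischer Servi rules (i6) are where (fs) is used, together with persistence.
\begin{itemize}
\item[] For $\Diamond(\varphi\to\psi)\vdash\Box\varphi\to\Diamond\psi$: take $x$ with $y\sim x$, $y\in\Box_\le(-A\cup B)$, and $x'\le x$ with $x'\in\Box_\le\Box_\sim A$. By (fs) there is $v\sim x'$ with $v\le y$. Then $v\in A$, hence $v\in B$, so $x'\in\Diamond_\sim B$.
\item[] The other rule is checked in the same way, using symmetry to turn the witness around.
\end{itemize}

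\emph{Completeness.} We would build the canonical model whose points are prime theories $\Gamma$ of $\ITB$, with $\Gamma'\le\Gamma$ iff $\Gamma\subseteq\Gamma'$, matching the predecessor convention. The relation $\sim$ is defined by $\Gamma\sim\Gamma'$ iff $\Box\varphi\in\Gamma\Rightarrow\varphi\in\Gamma'$ and $\varphi\in\Gamma'\Rightarrow\Diamond\varphi\in\Gamma$. The (B) axioms make $\sim$ symmetric and the (T) axioms make it reflexive.

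The truth lemma for $\wedge,\vee,\to$ is standard, using the prime theory extension lemma, which is available because (i3) gives distributivity.

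The main obstacle is the modal cases together with (fs). Our $\Box$ clause is $\Box_\le\Box_\sim$, so we must show two things. First, if $\Box\varphi\notin\Gamma$, then some $\Gamma'\supseteq\Gamma$ and some $\Gamma''\sim\Gamma'$ omit $\varphi$; this is a Fischer Servi style lemma, relying on the (i6) axiom $(\Diamond\varphi\to\Box\psi)\vdash\Box(\varphi\to\psi)$. Second, if $\Diamond\varphi\in\Gamma$, then some $\Gamma''\sim\Gamma$ contains $\varphi$. The first (i6) axiom is what will make (fs) hold canonically.

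We would try first to import these existence lemmas directly from \cite[\S4]{HM2026}, checking that their interaction condition coincides with (fs) under symmetry of $\sim$, as remarked after Definition~\ref{def:fstb-frame}. We would fall back on the standard Fischer Servi/Simpson construction only if the conventions differ.
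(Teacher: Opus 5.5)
Your proposal is correct, and it does more than the paper does. The paper gives no argument for this statement and simply imports it from \cite[Lem.~4.21]{HM2026}. Its only local work is the remark after Definition~\ref{def:fstb-frame}: since $\sim$ is symmetric, the single inclusion (fs) already gives the second interaction condition of \cite[Def.~4.3]{HM2026}, so the frame classes coincide. You instead reconstruct the standard Fischer Servi argument (soundness plus a canonical model) for the present presentation, and the ingredients you name are the right ones:
\begin{itemize}
\item Reflexivity and symmetry of the canonical $\sim$ follow from (i7) and (i8) exactly as you say.
\item The rule $\Diamond(\varphi\to\psi)\vdash\Box\varphi\to\Diamond\psi$ gives canonical (fs), but it is equally what drives your $\Diamond$-existence lemma. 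In both cases one separates the theory generated by $\Theta\cup\{\chi\mid\Box\chi\in\Gamma\}$ from the set $\{\chi\mid\Diamond\chi\notin\Gamma\}$. Here $\Theta=\{\varphi\}$ in the $\Diamond$ case; for (fs), $\Theta$ is the given $\sim$-neighbour and $\Gamma$ the extended theory. That set is an ideal by $\Diamond(\varphi\vee\psi)\vdash\Diamond\varphi\vee\Diamond\psi$, $\Diamond\bot\vdash\bot$, and primeness of $\Gamma$.
\item The rule $(\Diamond\varphi\to\Box\psi)\vdash\Box(\varphi\to\psi)$ gives the two-step $\Box$-existence lemma. First take a prime $\Gamma''\supseteq\{\chi\mid\Box\chi\in\Gamma\}$ omitting $\varphi$. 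Then take a prime $\Gamma'\supseteq\Gamma\cup\{\Diamond\delta\mid\delta\in\Gamma''\}$ avoiding every $\Box\chi$ with $\chi\notin\Gamma''$.
\end{itemize}
The second Fischer Servi frame condition is never needed canonically, because with symmetric $\sim$ it follows from (fs). That is the same fact that licenses the paper's citation. The citation buys economy. Your route buys a self-contained check for exactly the frame class and predecessor-style clauses used here, at the cost of writing out these existence lemmas.
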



\subsection{The Goldblatt-style translation}

\begin{definition}\label{def:transO}
The translation $(\cdot)\tO:\Fo\to\Lbd$ is defined by
\[
\bot\tO=\bot,\qquad p\tO=\Box\Diamond p,\qquad
(\alpha\wedge\beta)\tO=\alpha\tO\wedge\beta\tO,\qquad
(\alpha\vee\beta)\tO=\Box\Diamond(\alpha\tO\vee\beta\tO),
\]
\[
(\alpha\to\beta)\tO\;=\;\Box\bigl(\alpha\tO\to
\Diamond(\alpha\tO\wedge\beta\tO)\bigr).
\]
\end{definition}

\begin{remark}\label{rem:transO-neg}
For the defined negation of $\Fo$ we compute, in any FSTB-model:
$\den{\alpha\tO\wedge\bot}=\varnothing$, so
$\den{\Diamond(\alpha\tO\wedge\bot)}=\varnothing$ and
\[
\den{(\neg\alpha)\tO}
=\Box_{\le}\Box_{\sim}\Box_{\le}\bigl(-\den{\alpha\tO}\bigr)
=\den{\Box\neg\alpha\tO}.
\]
Thus, on the $\{\wedge,\vee,\neg\}$-fragment, $(\cdot)\tO$ agrees (on
the nose, given the clause $(\neg\alpha)\tO=\Box\neg\alpha\tO$) with the
Goldblatt-style translation of fundamental logic into $\ITB$ from
\cite[Thm.~1.2]{HM2026}.
\end{remark}

\begin{remark}\label{rem:strict-sasaki}
In the involutive (orthologic) specialization of $\SysF$, the conditional is the
Sasaki hook $\neg(\alpha\wedge\neg(\alpha\wedge\beta))$.  On classical KTB-models,
its Goldblatt translation is classically equivalent to
\[
\Box\bigl(\alpha\tO\to\Diamond(\alpha\tO\wedge\beta\tO)\bigr).
\]
Thus the displayed strict clause is the natural intuitionistic-modal continuation of the Goldblatt
image of the Sasaki hook: the material arrow of classical KTB is replaced by the intuitionistic
arrow.
\end{remark}

\subsection{From FSTB-models to $\sysF$-models}

\begin{definition}\label{def:F2}
For an FSTB-model $\mathcal M=(X,\le,\sim,V)$, let
\[
F_2(\mathcal M)\;:=\;(X,\ \lhd,\ V^{\flat}),
\qquad\text{where }\lhd:={\sim}\circ{\le}
\ \text{ and }\ V^{\flat}(p):=\Box_{\le}\Box_{\sim}\Diamond_{\sim}V(p).
\]
Explicitly, $y\lhd x$ iff there is $u$ with $y\sim u$ and $u\le x$, and
$V^{\flat}(p)=\den{\Box\Diamond p}_{\mathcal M}$.
\end{definition}

The frame component of $F_2$ is exactly the fundamental reduct of
\cite[Def.~4.6]{HM2026}: the relation
$\lhd={\sim}\circ{\le}$ is HM's induced openness relation.  Note that it is literally the same construction
$\rho\circ{\le}$ as in Definition \ref{def:F1}, with the KTB relation
$\sim$ in place of the compatibility relation $\between$.

\begin{lemma}\label{lem:F2-frame}
Let $\mathcal M=(X,\le,\sim,V)$ be an FSTB-model and
$\lhd={\sim}\circ{\le}$.
\begin{enumerate}
\item $(X,\lhd)$ is reflexive and (prefactoring) pseudo-symmetric.
\item $\FP(c_{\lhd})=\Box_{\le}\Box_{\sim}[\wp(X)]$, and every
$A\in\FP(c_{\lhd})$ is persistent.
\item For every persistent $U$, $\Diamond_{\le^{-1}}U=U$; consequently,
for $A,B\in\FP(c_{\lhd})$,
\[
\Diamond_{\rhd}(A\cap B)=\Diamond_{\sim}(A\cap B)
\qquad\text{and}\qquad
\Diamond_{\rhd}(A\cup B)=\Diamond_{\sim}(A\cup B).
\]
\item For every $W\subseteq X$,
$\Box_{\le}\Box_{\sim}W=\Box_{\le}\Box_{\sim}\Box_{\le}W$.
\end{enumerate}
\end{lemma}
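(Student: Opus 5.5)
Parts (1)--(3) mostly follow from Lemma~\ref{lem:reduct} with $\rho:=\sim$. The one genuinely new ingredient is the identity in (4), which needs the Fischer--Servi condition (fs).

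For (1), Lemma~\ref{lem:reduct}(1) and (3) give reflexivity and prefactoring directly, since $\sim$ is reflexive and symmetric and $\le$ is a preorder. For (2), Lemma~\ref{lem:reduct}(4) gives $\FP(c_{\lhd})=\Box_{\le}\Box_{\sim}[\wp(X)]$ and also $\Box_{\le}A=A$ for every $A\in\FP(c_{\lhd})$, which is persistence. For (3), the identity $\Diamond_{\le^{-1}}U=U$ for persistent $U$ is Lemma~\ref{lem:reduct}(5), equivalently Lemma~\ref{lem:toolbox}(d). The two $\Diamond_{\rhd}$ identities then come from Lemma~\ref{lem:reduct}(6), because persistent sets (the $\le$-downsets) are closed under $\cap$ and $\cup$.

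For (4), I will prove the two inclusions separately. The inclusion $\Box_{\le}\Box_{\sim}\Box_{\le}W\subseteq\Box_{\le}\Box_{\sim}W$ holds because $\Box_{\le}W\subseteq W$ by reflexivity of $\le$ (Lemma~\ref{lem:toolbox}(c)) and the box operators are monotone.

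For the reverse inclusion, I first apply (fs) with $Z:=W$ to get $\Box_{\le}\Box_{\sim}W\subseteq\Box_{\sim}\Box_{\le}W$. I then apply $\Box_{\le}$ to both sides and use idempotence $\Box_{\le}\Box_{\le}=\Box_{\le}$ (Lemma~\ref{lem:toolbox}(d)):
\[
\Box_{\le}\Box_{\sim}W=\Box_{\le}\Box_{\le}\Box_{\sim}W\subseteq\Box_{\le}\Box_{\sim}\Box_{\le}W .
\]
This closes (4).

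The only delicate point is making sure (fs) is applied in the stated orientation $\Box_{\le}\Box_{\sim}Z\subseteq\Box_{\sim}\Box_{\le}Z$. If needed, I will check it pointwise. Let $x\in\Box_{\le}\Box_{\sim}W$, and take $y\le x$, $u\sim y$, $v\le u$; we must show $v\in W$. Rewriting $u\sim y$ as $y\sim u$ by symmetry of $\sim$, the relational form of (fs) applied to $v\le u$ and $u\sim y$ gives some $w$ with $v\sim w$ and $w\le y$. Then $w\le x$ by transitivity, and $x\in\Box_{\le}\Box_{\sim}W$ yields $v\in W$.
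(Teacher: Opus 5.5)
Your proof is correct and follows the paper's argument essentially verbatim. Items (1)--(3) come from Lemma~\ref{lem:reduct} with $\rho:=\sim$, and (4) uses monotonicity plus $\Box_{\le}W\subseteq W$ for one inclusion and idempotence of $\Box_{\le}$ together with (fs) for the other. (Your pointwise check verifies the combined inclusion $\Box_{\le}\Box_{\sim}W\subseteq\Box_{\le}\Box_{\sim}\Box_{\le}W$ directly, not (fs) itself, and the appeal to symmetry of $\sim$ there is unnecessary; the argument is nonetheless sound.)
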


\begin{proof}
Items (1)--(3) follow from Lemma \ref{lem:reduct}(1),(3)--(6), with
$\rho:=\sim$; here $\Box_{\le}A=A$ is exactly persistence.  For (4),
the right-to-left inclusion follows from
$\Box_{\le}W\subseteq W$ (Lemma \ref{lem:toolbox}(c)) by monotonicity.
For left-to-right, by Lemma \ref{lem:toolbox}(d) and condition (fs),
\[
\Box_{\le}\Box_{\sim}W
=\Box_{\le}\bigl(\Box_{\le}\Box_{\sim}W\bigr)
\subseteq\Box_{\le}\bigl(\Box_{\sim}\Box_{\le}W\bigr).\qedhere
\]
\end{proof}

\begin{theorem}\label{thm:F2}
Let $\mathcal M=(X,\le,\sim,V)$ be an FSTB-model.  Then:
\begin{enumerate}
\item $F_2(\mathcal M)$ is an $\sysF$-model;
\item for all $\varphi\in\Fo$,
$\den{\varphi}_{F_2(\mathcal M)}=\den{\varphi\tO}_{\mathcal M}$.
\end{enumerate}
\end{theorem}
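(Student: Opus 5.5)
Part (1) follows the pattern of Theorem~\ref{thm:F1}. By Lemma~\ref{lem:F2-frame}(1), $(X,\lhd)$ is reflexive and pseudo-symmetric. By Lemma~\ref{lem:F2-frame}(2), the propositions are exactly the sets $\Box_{\le}\Box_{\sim}Z$. Hence $V^{\flat}(p)=\Box_{\le}\Box_{\sim}(\Diamond_{\sim}V(p))$ lies in $\FP(c_{\lhd})$, so $V^{\flat}$ is admissible.

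Part (2) goes by induction on $\varphi$, with $A=\den{\alpha}_{F_2}=\den{\alpha\tO}=A'$ and $B=B'$ as inductive hypotheses. Since $A,B$ are propositions of $(X,\lhd)$, they are persistent by Lemma~\ref{lem:F2-frame}(2). The cases run as follows.
\begin{itemize}
\item Atoms: this is the definition of $V^{\flat}$.
\item $\bot$: both sides are empty, using reflexivity of $\lhd$.
\item $\wedge$: both sides are intersections.
\item $\vee$: use Lemma~\ref{lem:reduct}(2) and Lemma~\ref{lem:F2-frame}(3). We get
\[
c_{\lhd}(A\cup B)=\Box_{\le}\Box_{\sim}\Diamond_{\rhd}(A\cup B)=\Box_{\le}\Box_{\sim}\Diamond_{\sim}(A\cup B),
\]
which is $\den{\Box\Diamond(\alpha\tO\vee\beta\tO)}$ because $\den{\alpha\tO\vee\beta\tO}=A'\cup B'$.
\end{itemize}

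The conditional is the main case. By Lemma~\ref{lem:reduct}(6),
\[
\to_{\lhd}(A,B)=\Box_{\le}\Box_{\sim}\bigl(-A\cup\Diamond_{\sim}(A\cap B)\bigr).
\]
On the target side,
\[
\den{(\alpha\to\beta)\tO}=\Box_{\le}\Box_{\sim}\Box_{\le}\bigl(-A'\cup\Diamond_{\sim}(A'\cap B')\bigr).
\]
Lemma~\ref{lem:F2-frame}(4) with $W=-A\cup\Diamond_{\sim}(A\cap B)$ absorbs the inner $\Box_{\le}$, so the two expressions coincide.

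The one point needing care is that the inner $\Box_{\le}$ comes from the intuitionistic implication clause, not from the $\Box$. That is exactly why item (4) of Lemma~\ref{lem:F2-frame}, which rests on (fs), is needed. No persistence of $-A$ is required, since (4) holds for arbitrary $W$.

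The remaining bookkeeping is to confirm that the $\Diamond$ identities are applied only to sets built from propositions. Here $A\cap B$ and $A\cup B$ are persistent as intersections and unions of downsets, so Lemma~\ref{lem:reduct}(5) applies. I expect the disjunction case to be the only place where the closure formula needs explicit unwinding, and it is routine.
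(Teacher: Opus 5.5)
Your proposal is correct and follows essentially the same route as the paper. Admissibility of $V^{\flat}$ comes from Lemma~\ref{lem:F2-frame}(2). The disjunction case uses Lemma~\ref{lem:reduct}(2) together with Lemma~\ref{lem:F2-frame}(3), and the conditional case uses Lemma~\ref{lem:F2-frame}(4), i.e.\ (fs), for arbitrary $W$ to absorb the inner $\Box_{\le}$ contributed by the intuitionistic implication clause.
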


\begin{proof}
(1) The frame is reflexive and pseudo-symmetric by Lemma
\ref{lem:F2-frame}(1), and
$V^{\flat}(p)=\Box_{\le}\Box_{\sim}\bigl(\Diamond_{\sim}V(p)\bigr)
\in\Box_{\le}\Box_{\sim}[\wp(X)]=\FP(c_{\lhd})$.

(2) By induction on $\varphi$; as before, let
$A,B$ be the truth sets of $\alpha,\beta$ in $F_2(\mathcal M)$ and
$A',B'$ those of $\alpha\tO,\beta\tO$ in $\mathcal M$, with inductive
hypotheses $A=A'$, $B=B'$ and $A,B\in\FP(c_{\lhd})$.

\emph{Base cases.}  $\den{p}_{F_2(\mathcal M)}=V^{\flat}(p)
=\den{\Box\Diamond p}_{\mathcal M}=\den{p\tO}_{\mathcal M}$.  For
$\bot$: both sides are $\varnothing$, by reflexivity of $\lhd$ on the
left and by the semantic clause on the right.

\emph{Conjunction.}  Immediate.

\emph{Disjunction.}  Using Lemma \ref{lem:reduct}(2), Lemma
\ref{lem:F2-frame}(3), and the inductive hypotheses,
\[
\den{\alpha\vee\beta}_{F_2(\mathcal M)}
=c_{\lhd}(A\cup B)
=\Box_{\le}\Box_{\sim}\Diamond_{\sim}\Diamond_{\le^{-1}}(A\cup B)
=\Box_{\le}\Box_{\sim}\Diamond_{\sim}(A'\cup B')
=\den{\Box\Diamond(\alpha\tO\vee\beta\tO)}_{\mathcal M},
\]
which is $\den{(\alpha\vee\beta)\tO}_{\mathcal M}$.

\emph{Conditional.}  Using Lemma \ref{lem:reduct}(2) and Lemma
\ref{lem:F2-frame}(3),(4), and the inductive hypotheses,
\begin{align*}
\den{\alpha\to\beta}_{F_2(\mathcal M)}
&=\Box_{\lhd}\bigl(-A\cup\Diamond_{\rhd}(A\cap B)\bigr)
=\Box_{\le}\Box_{\sim}\bigl(-A'\cup\Diamond_{\sim}(A'\cap B')\bigr)\\
&=\Box_{\le}\Box_{\sim}\Box_{\le}\bigl(-A'\cup\Diamond_{\sim}(A'\cap B')\bigr)
=\den{\Box\bigl(\alpha\tO\to\Diamond(\alpha\tO\wedge\beta\tO)\bigr)}_{\mathcal M},
\end{align*}
which is $\den{(\alpha\to\beta)\tO}_{\mathcal M}$.
\end{proof}


\subsection{From $\sysF$-models to FSTB-models}

\begin{definition}\label{def:G2}
For an $\sysF$-model $\mathcal N=(X,\lhd,V)$, let
\[
G_2(\mathcal N)\;:=\;(X,\ \prep,\ \lhds,\ V^{\sharp}),
\qquad\text{where }
V^{\sharp}(p):=\Diamond_{(\prep)^{-1}}V(p)
=\{z\mid \exists x\,(z\prep x \text{ and } x\in V(p))\},
\]
i.e., the intuitionistic preorder is strong refinement, the KTB relation
is the symmetric kernel of openness, and the valuation is closed
downwards under strong refinement. When $(X,\lhd)$ is strongly factoring, this frame component is exactly
the FSTB companion of \cite[Def.~4.15]{HM2026}.
\end{definition}

\begin{theorem}\label{thm:G2}
Let $\mathcal N=(X,\lhd,V)$ be an $\sysF$-model that is strongly factoring.  Then:
\begin{enumerate}
\item $G_2(\mathcal N)$ is an FSTB-model;
\item $F_2(G_2(\mathcal N))=\mathcal N$;
\item for all $\varphi\in\Fo$,
$\den{\varphi}_{\mathcal N}=\den{\varphi\tO}_{G_2(\mathcal N)}$.
\end{enumerate}
\end{theorem}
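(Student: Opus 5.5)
The plan is to mirror the proof of Theorem~\ref{thm:G1}, establishing (2) first and then getting (3) for free from Theorem~\ref{thm:F2}. Part (1) splits into a frame part and a valuation part. For the frame part, $\prep$ is a preorder by Lemma~\ref{lem:prep}, and $\lhds$ is symmetric by construction and reflexive because $\lhd$ is reflexive. For (fs) I use the relational form: given $y\prep u$ and $u\lhds x$, I need some $v$ with $y\lhds v$ and $v\prep x$. Lemma~\ref{lem:prep}(1) gives ${\prep}\circ{\lhds}\subseteq{\lhd}$, so $y\lhd x$. Strong factoring then yields $v$ with $v\lhds y$ and $v\prep x$, which is exactly what (fs) requires. (Alternatively I can cite \cite[Def.~4.15]{HM2026} and the lemmas following it.) For the valuation part, $V^{\sharp}(p)=\Diamond_{(\prep)^{-1}}V(p)$ is $\prep$-downward closed by transitivity of $\prep$, so it is persistent.

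For (2), the frame identity is Proposition~\ref{prop:strongps-char}(2): strong factoring gives ${\lhds}\circ{\prep}={\lhd}$, and this composite is precisely the openness relation of $F_2(G_2(\mathcal N))$. The valuation is the harder part. I must show $\Box_{\prep}\Box_{\lhds}\Diamond_{\lhds}V^{\sharp}(p)=V(p)$.

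\emph{Step 1.} Compute the inner part. By Proposition~\ref{prop:strongps-char}(2),
\[
\Diamond_{\lhds}V^{\sharp}(p)=\Diamond_{\lhds}\Diamond_{(\prep)^{-1}}V(p)=\Diamond_{\rhd}V(p).
\]

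\emph{Step 2.} Apply the outer boxes. Again by Proposition~\ref{prop:strongps-char}(2), $\Box_{\prep}\Box_{\lhds}=\Box_{\lhd}$. Hence the whole expression equals $\Box_{\lhd}\Diamond_{\rhd}V(p)=c_{\lhd}(V(p))$.

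\emph{Step 3.} Since $V(p)$ is a proposition, $c_{\lhd}(V(p))=V(p)$.

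So the valuation of the round-trip model is $V$ and $F_2(G_2(\mathcal N))=\mathcal N$ on the nose. The point to watch is the orientation of $\Diamond_{(\prep)^{-1}}$ against the convention in the proposition. The definition of $V^{\sharp}$ was chosen so that it matches $\{z\mid\exists y\,(z\prep y,\ y\in U)\}$, so Step 1 should go through as stated.

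For (3), apply Theorem~\ref{thm:F2}(2) to the FSTB-model $G_2(\mathcal N)$ to get $\den{\varphi}_{F_2(G_2(\mathcal N))}=\den{\varphi\tO}_{G_2(\mathcal N)}$. By (2), the left side is $\den{\varphi}_{\mathcal N}$, since truth sets depend only on the frame and the valuation.

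The main obstacle I expect is verifying (fs) with the right relational orientation. The operator inclusion $\Box_{\le}\Box_{\sim}\subseteq\Box_{\sim}\Box_{\le}$ must translate, via Lemma~\ref{lem:toolbox}(a),(b), into ${\sim}\circ{\le}\subseteq{\le}\circ{\sim}$ on the predecessor side. With $\le={\prep}$ and $\sim={\lhds}$, symmetry of $\lhds$ turns this into exactly strong factoring combined with Lemma~\ref{lem:prep}(1). If the orientation comes out reversed, I will use the inclusion ${\lhds}\circ{\prep}\subseteq{\lhd}$ from Lemma~\ref{lem:prep}(2) instead, and then apply strong factoring in the same way.
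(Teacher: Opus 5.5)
Your proof is correct. For parts (2) and (3) it is the paper's argument step for step: the frame identity ${\lhds}\circ{\prep}={\lhd}$ and the two operator identities from Proposition~\ref{prop:strongps-char}(2) reduce the round-trip valuation to $c_{\lhd}(V(p))=V(p)$, and (3) is then inherited from Theorem~\ref{thm:F2}.

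The one difference is in (1). The paper simply cites \cite[Lem.~4.14]{HM2026} for $(X,\prep,\lhds)$ being an FSTB-frame, whereas you verify (fs) directly: $y\prep u\lhds x$ gives $y\lhd x$ by Lemma~\ref{lem:prep}(1), and strong factoring then supplies $v$ with $y\lhds v$ and $v\prep x$. That argument is correct and self-contained, and it makes explicit that strong factoring is exactly the hypothesis (fs) uses.

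Your closing paragraph, however, has the composites reversed. Under the paper's composition convention, (fs) is ${\le}\circ{\sim}\subseteq{\sim}\circ{\le}$, which is what your first paragraph actually checks, not ${\sim}\circ{\le}\subseteq{\le}\circ{\sim}$. The latter is the mirror condition (fs$'$) of Remark~\ref{rem:fs-mirror}. Your proposed fallback (Lemma~\ref{lem:prep}(2) plus strong factoring) would not establish it. Strong factoring returns a witness $v$ with $v\lhds y$ and $v\prep x$, not one with $y\prep v$ and $v\lhds x$; the latter is what converse-strong factoring provides. Since your main argument uses the correct orientation, this slip does not affect the proof, but the fallback should be dropped.
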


\begin{proof}
By Proposition \ref{prop:strongps-char}(2), strong factoring gives, for all $Z,U\subseteq X$,
\[
\Box_{\lhd}Z=\Box_{\prep}\Box_{\lhds}Z
\tag{$*$}
\]
\[
\Diamond_{\rhd}U=\Diamond_{\lhds}\Diamond_{(\prep)^{-1}}U .
\tag{$**$}
\]

(1) By \cite[Lem.~4.14]{HM2026}, $(X,\prep,\lhds)$ is an FSTB-frame.
Moreover, $V^{\sharp}(p)$ is persistent with respect to $\prep$: if
$y\prep x$ and $x\prep w$ with $w\in V(p)$, then $y\prep w$ by
transitivity.

(2) For the frames: the openness relation of $F_2(G_2(\mathcal N))$ is
${\lhds}\circ{\prep}$, which equals $\lhd$ by strong factoring (Proposition \ref{prop:strongps-char}(2)).  For the
valuations: the valuation of $F_2(G_2(\mathcal N))$ at $p$ is
\[
\Box_{\prep}\Box_{\lhds}\Diamond_{\lhds}\,V^{\sharp}(p)
\;\overset{(*)}{=}\;
\Box_{\lhd}\,\Diamond_{\lhds}\Diamond_{(\prep)^{-1}}V(p)
\;\overset{(**)}{=}\;
\Box_{\lhd}\Diamond_{\rhd}V(p)
\;=\;c_{\lhd}\bigl(V(p)\bigr)
\;=\;V(p),
\]
since $V(p)\in\FP(c_{\lhd})$.

(3) Apply Theorem \ref{thm:F2} to the FSTB-model $G_2(\mathcal N)$ and
use (2), exactly as in the proof of Theorem \ref{thm:G1}(3).
\end{proof}

\begin{remark}\label{rem:fs-mirror}
Suppose one strengthens Definition \ref{def:fstb-frame} by the mirror
interaction condition
\[
\text{(fs$'$)}\qquad
\Box_{\sim}\Box_{\le}Z\subseteq\Box_{\le}\Box_{\sim}Z
\quad\text{for all }Z\subseteq X,
\]
equivalently: whenever $y\sim u$ and $u\le x$, there is $v$ with
$y\le v$ and $v\sim x$.  For $G_2(\mathcal N)$, condition (fs$'$)
amounts to ${\lhds}\circ{\prep}\subseteq{\prep}\circ{\lhds}$, i.e., to
the statement that whenever $y\lhd x$ there is $v$ with $y\prep v$ and
$v\lhds x$---and this holds whenever $\mathcal N$ is \emph{converse}-strongly factoring.  Indeed, given $y\lhd x$, converse-strong factoring (applied to the pair $y\lhd x$, with $y$ in
the role of the first argument) yields $z$ with $z\lhds x$, $z\post y$,
and $y\pre z$, i.e., $y\prep z$ and $z\lhds x$.  Since the canonical
model is converse-strongly factoring (Theorem
\ref{thm:canonical-strongps}), the faithfulness argument below goes
through verbatim under the strengthened definition of FSTB-frames.
\end{remark}

\subsection{The main theorem for $\ITB$}

\begin{theorem}\label{thm:main-itb}
For all $\varphi,\psi\in\Fo$:
\[
\varphi\vdash_{\sysF}\psi
\iff
\varphi\tO\vdash_{\ITB}\psi\tO .
\]
\end{theorem}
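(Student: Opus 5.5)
The plan mirrors the proof of Theorem~\ref{thm:main-os4}, with the pair $(F_2,G_2)$ in place of $(F_1,G_1)$ and Theorem~\ref{thm:itb-complete} in place of Theorem~\ref{thm:os4-complete}.

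For ($\Rightarrow$), assume $\varphi\vdash_{\sysF}\psi$ and fix an arbitrary FSTB-model $\mathcal M$. By Theorem~\ref{thm:F2}(1), $F_2(\mathcal M)$ is an $\sysF$-model. Soundness (Theorem~\ref{thm:soundness}(2)) then gives $\den{\varphi}_{F_2(\mathcal M)}\subseteq\den{\psi}_{F_2(\mathcal M)}$. By Theorem~\ref{thm:F2}(2), this inclusion is the same as $\den{\varphi\tO}_{\mathcal M}\subseteq\den{\psi\tO}_{\mathcal M}$. Since $\mathcal M$ was arbitrary, the completeness half of Theorem~\ref{thm:itb-complete} yields $\varphi\tO\vdash_{\ITB}\psi\tO$.

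For ($\Leftarrow$), argue contrapositively. Suppose $\varphi\nvdash_{\sysF}\psi$. Corollary~\ref{cor:strong-countermodel} supplies an $\sysF$-model $\mathcal N$ whose frame is strongly factoring, together with a point $w$ such that $w\in\den{\varphi}_{\mathcal N}$ and $w\notin\den{\psi}_{\mathcal N}$. Strong factoring is exactly the hypothesis of Theorem~\ref{thm:G2}, so $G_2(\mathcal N)$ is an FSTB-model. Moreover, by Theorem~\ref{thm:G2}(3), $\den{\varphi\tO}_{G_2(\mathcal N)}=\den{\varphi}_{\mathcal N}\ni w$ and $\den{\psi\tO}_{G_2(\mathcal N)}=\den{\psi}_{\mathcal N}\not\ni w$. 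So $\den{\varphi\tO}\not\subseteq\den{\psi\tO}$ in some FSTB-model, and the soundness half of Theorem~\ref{thm:itb-complete} gives $\varphi\tO\nvdash_{\ITB}\psi\tO$.

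All the substantive work has been done earlier. That includes the semantic transfer of the preconditional clause in Theorem~\ref{thm:F2}, which uses Lemma~\ref{lem:F2-frame}(4) to insert the inner $\Box_{\le}$. It also includes the on-the-nose identity $F_2(G_2(\mathcal N))=\mathcal N$, and strong factoring of the canonical model (Theorem~\ref{thm:canonical-strongps}), which needs \rn{$\neg\neg$I}. The one point I would check carefully is that the cited results line up with the definitions used here. Specifically, \cite[Lem.~4.14]{HM2026} should give the FSTB-frame condition (fs) in the operator form of Definition~\ref{def:fstb-frame}. Likewise, \cite[Lem.~4.21]{HM2026} should be a completeness theorem for exactly this class of frames, possibly including the stronger class satisfying (fs$'$). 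If the latter is the case, Remark~\ref{rem:fs-mirror} covers it, because the canonical model is also converse-strongly factoring. Finally, as in Corollary~\ref{cor:main-os4-sets}, the result extends to premise sets via the finitarity of $\vdash_{\sysF}$.
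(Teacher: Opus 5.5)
Your proposal is correct and follows the paper's proof essentially verbatim. For ($\Rightarrow$) you pass through $F_2$, Theorem~\ref{thm:F2}, soundness of $\sysF$, and completeness of $\ITB$; for ($\Leftarrow$) you transport the strongly factoring canonical countermodel of Corollary~\ref{cor:strong-countermodel} along $G_2$ via Theorem~\ref{thm:G2} and conclude by soundness of $\ITB$, and your side remarks on (fs$'$) and on premise sets are sound but not needed.
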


\begin{proof}
($\Rightarrow$)  Suppose $\varphi\vdash_{\sysF}\psi$, and let
$\mathcal M$ be any FSTB-model.  By Theorem \ref{thm:F2}(1),
$F_2(\mathcal M)$ is an $\sysF$-model, so
$\den{\varphi}_{F_2(\mathcal M)}\subseteq\den{\psi}_{F_2(\mathcal M)}$
by soundness of $\sysF$ (Theorem \ref{thm:soundness}); by Theorem
\ref{thm:F2}(2), this says
$\den{\varphi\tO}_{\mathcal M}\subseteq\den{\psi\tO}_{\mathcal M}$.
Since $\mathcal M$ was arbitrary, completeness of $\ITB$ (right-to-left direction of Theorem
\ref{thm:itb-complete}) yields $\varphi\tO\vdash_{\ITB}\psi\tO$.

($\Leftarrow$)  Suppose $\varphi\nvdash_{\sysF}\psi$.  By Corollary
\ref{cor:strong-countermodel}, there are an $\sysF$-model $\mathcal M$
that is strongly factoring and a point $w$ with
$w\in\den{\varphi}_{\mathcal M}$ and
$w\notin\den{\psi}_{\mathcal M}$.  By Theorem \ref{thm:G2},
$G_2(\mathcal M)$ is an FSTB-model with
$\den{\varphi\tO}_{G_2(\mathcal M)}=\den{\varphi}_{\mathcal M}\ni w$ and
$\den{\psi\tO}_{G_2(\mathcal M)}=\den{\psi}_{\mathcal M}\not\ni w$.
Hence $\varphi\tO\nvdash_{\ITB}\psi\tO$ by soundness (left-to-right direction of Theorem
\ref{thm:itb-complete}).
\end{proof}

\begin{corollary}\label{cor:hm12}
For all $\varphi,\psi$ in the $\{\wedge,\vee,\neg\}$-fragment:
$\varphi\vdash_{\mathcal F}\psi$ iff
$\varphi\tO\vdash_{\ITB}\psi\tO$.  In view of Remark
\ref{rem:transO-neg}, this recovers Theorem 1.2 of \cite{HM2026}.
\end{corollary}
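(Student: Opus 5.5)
The plan is to combine the main theorem for $\ITB$ (Theorem~\ref{thm:main-itb}) with the conservativity of $\sysF$ over fundamental logic (Corollary~\ref{cor:conservative}), exactly as Corollary~\ref{cor:hm11} was obtained from Theorem~\ref{thm:main-os4}. Let $\varphi,\psi$ be formulas of the $\{\wedge,\vee,\neg\}$-fragment, regarded as formulas of $\Fo$ via $\neg\chi:=\chi\to\bot$.

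The argument is a chain of two equivalences:
\[
\varphi\vdash_{\mathcal F}\psi
\iff
\varphi\vdash_{\sysF}\psi
\iff
\varphi\tO\vdash_{\ITB}\psi\tO .
\]
The first equivalence is Corollary~\ref{cor:conservative}. Its left-to-right half is Proposition~\ref{prop:F-extends-FL}. Its right-to-left half comes from relational soundness of $\sysF$ together with completeness of fundamental logic over reflexive, pseudo-symmetric frames. The second equivalence is Theorem~\ref{thm:main-itb}, instantiated at $\varphi,\psi$.

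For the final sentence, recovering Theorem 1.2 of \cite{HM2026}, I would invoke Remark~\ref{rem:transO-neg}. It shows that on the fragment, $(\cdot)\tO$ sends $\neg\alpha$ to a formula with the same truth set as $\Box\neg\alpha\tO$ in every FSTB-model. The remaining clauses for $p$, $\wedge$, $\vee$ and $\bot$ coincide literally with the Goldblatt-style clauses of \cite{HM2026}.

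A routine induction on fragment formulas would then show that $\varphi\tO$ is $\ITB$-interderivable with HM's translation of $\varphi$. This uses Theorem~\ref{thm:itb-complete} to turn equality of truth sets in all FSTB-models into interderivability. It also uses that $\Box$, $\Diamond$ and $\wedge$ are monotone, so interderivability propagates through the clauses. Transitivity of $\vdash_{\ITB}$ then transfers the equivalence to HM's statement.

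The only point needing care is this identification step. One must check that the induction works with semantic equality, which is a congruence for all the connectives since each truth set is computed from the truth sets of the parts. I expect no real obstacle here, since everything substantive is already contained in Theorem~\ref{thm:main-itb} and Corollary~\ref{cor:conservative}.
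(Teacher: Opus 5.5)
Your proposal is correct and follows the paper's proof exactly: the paper obtains the equivalence by chaining Corollary~\ref{cor:conservative} with Theorem~\ref{thm:main-itb}, and it handles the final sentence simply by appeal to Remark~\ref{rem:transO-neg}. Your extra induction is a sound elaboration of what that remark leaves implicit: you show equality of truth sets in all FSTB-models and then convert it to $\ITB$-interderivability via Theorem~\ref{thm:itb-complete}.
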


\begin{proof}
Combine Corollary \ref{cor:conservative} with Theorem
\ref{thm:main-itb}.
\end{proof}

Putting the two main theorems together, the fundamental logician with a
preconditional, the orthologician with the Sasaki hook and an S4
necessity, and the intuitionistic logician with a KTB modality can each
interpret the reasoning of the others:

\begin{corollary}\label{cor:threeway}
For all $\varphi,\psi\in\Fo$:
\[
\varphi\vdash_{\sysF}\psi
\iff
\varphi\tI\vdash_{\OS}\psi\tI
\iff
\varphi\tO\vdash_{\ITB}\psi\tO .
\]
\end{corollary}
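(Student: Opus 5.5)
The plan is to read Corollary~\ref{cor:threeway} off as a chain of two biconditionals, each already established, with no new semantic work needed. The first equivalence, $\varphi\vdash_{\sysF}\psi\iff\varphi\tI\vdash_{\OS}\psi\tI$, is exactly Theorem~\ref{thm:main-os4}. The second link I will establish through $\vdash_{\sysF}$ itself, rather than by translating directly between $\OS$ and $\ITB$. Theorem~\ref{thm:main-itb} gives $\varphi\vdash_{\sysF}\psi\iff\varphi\tO\vdash_{\ITB}\psi\tO$. Combining these by transitivity of $\iff$ yields the displayed three-way equivalence, with each side equivalent to the central one.

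I would still note which ingredients each direction really rests on, so the reader sees the corollary is not circular. The forward directions of both theorems use only soundness of $\sysF$ over $\sysF$-models (Theorem~\ref{thm:soundness}) together with the forward frame transformations $F_1$ and $F_2$ (Theorems~\ref{thm:F1} and~\ref{thm:F2}), plus completeness of the targets (Theorems~\ref{thm:os4-complete} and~\ref{thm:itb-complete}). The backward directions both draw on the single countermodel of Corollary~\ref{cor:strong-countermodel}, over the carrier $W_a\cup W_b\cup W_e$. That model is simultaneously balanced, which $G_1$ requires, and strongly factoring, which $G_2$ requires, by Theorem~\ref{thm:canonical-strongps}.

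There is no genuine obstacle; the only point worth checking is that both main theorems are stated for the same $\sysF$ and the same formulas $\varphi,\psi\in\Fo$, so the biconditionals compose directly. If desired, one could add the remark that one fixed canonical point $w$ refutes $\varphi\tI\vdash_{\OS}\psi\tI$ and $\varphi\tO\vdash_{\ITB}\psi\tO$ at once, via $G_1$ and $G_2$ applied to the same model. That observation is optional and not needed for the statement.
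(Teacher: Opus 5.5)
Your proposal is correct and matches the paper's argument: the corollary is obtained by chaining Theorem~\ref{thm:main-os4} and Theorem~\ref{thm:main-itb} through the common side $\varphi\vdash_{\sysF}\psi$. Your side remarks are also accurate, in particular that the single countermodel of Corollary~\ref{cor:strong-countermodel} is both balanced and strongly factoring, so it serves $G_1$ and $G_2$ at once.
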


\section{Conclusion and further directions}\label{sec:conclusion}

We have developed a propositional extension of fundamental logic by a
preconditional connective. On the proof-theoretic side, the systems
$\SysK\subseteq\SysT\subseteq\SysF$ successively axiomatize the
preconditional principles, conditional identity together with the
semicomplementation of the derived negation, and the additional
double-negation principle characteristic of fundamental logic. On
the semantic side, all three systems are matched with algebraic semantics,
while $\SysT$ and $\SysF$ receive relational semantics over openness frames,
strong completeness, and the finite model property. Finally, the modal translations
show that the resulting logic fits into the same triangle as fundamental
logic, ortho-$\mathsf{S4}$, and intuitionistic $\mathsf{KTB}$, with the
preconditional becoming a boxed Sasaki hook on the ortho-$\mathsf{S4}$ side
and, on the intuitionistic $\mathsf{KTB}$ side, a strict clause whose
classical $\mathsf{KTB}$ reading is equivalent to the Goldblatt image of that
hook.

Several natural questions remain. 
First, the finite model property gives
decidability but only a coarse upper bound; it would be desirable to find a
cut-free or analytic proof system, in the spirit of the sequent calculus of
fundamental logic in \cite{AB2023}, and to determine sharper complexity bounds. 
A further direction is to extend the present systems to quantified
fundamental logic with a preconditional. This would build directly on
\cite[\S\S~3.5 and~4.5]{HM2026}, where Holliday and Massas already extend both
modal translations to first-order fundamental logic. Their GMT-style
translation into $\mathsf{QOS4}$ uses
\[
\mu(\forall x\varphi)=\Box\forall x\mu(\varphi),
\qquad
\mu(\exists x\varphi)=\exists x\mu(\varphi),
\]
while their Goldblatt-style translation into $\mathsf{QFSTB}$ uses
\[
\gamma(\forall x\varphi)=\forall x\gamma(\varphi),
\qquad
\gamma(\exists x\varphi)=\Box\Diamond\exists x\gamma(\varphi);
\]
both are full and faithful
\cite[Lemmas~3.23 and~4.24 and Corollaries~3.25 and~4.26]{HM2026}. The new
task is therefore to add the preconditional to these quantified translations
and determine whether the truth-transfer and companion constructions for its
modal clauses commute with the interpretation of the quantifiers. This
requires analyzing how $\to$ interacts with the meets and joins interpreting
$\forall$ and $\exists$, as well as the role of the Barcan and constant-domain
principles noted in \cite[Remarks~3.26 and~4.27]{HM2026}. 


\end{document}